\documentclass[11pt]{amsart}
\usepackage[a4paper,centering,left=2.4cm,right=2.4cm]{geometry}
\usepackage{amsfonts,amscd,amssymb,amsmath,amsthm,hyperref,mathrsfs,multirow,xcolor,lscape,longtable,pbox,lipsum,afterpage,graphicx,threeparttable}
\usepackage[thinlines]{easytable}
\usepackage{microtype}

\usepackage{tikz-cd}

%-------- Tikz library ----------------------------------------%

\setcounter{tocdepth}{1}
\usepackage[all,cmtip]{xy}
\usepackage{tikz}
\usetikzlibrary{arrows}
\usetikzlibrary{matrix}
\usetikzlibrary{calc}

%------------ Tikz Library ends here --------------------%

\newtheorem{theorem}{Theorem}[subsection]
\newtheorem{definition}{Definition}[subsection]

\newtheorem{lemma}{Lemma}[subsection]
\usepackage[OT2,T1]{fontenc}
\DeclareSymbolFont{cyrletters}{OT2}{wncyr}{m}{n}
\DeclareMathSymbol{\Sha}{\mathalpha}{cyrletters}{"58}
\newtheorem{claim}{Claim}[subsection]
\newtheorem{proposition}{Proposition}[subsection]

\newtheorem{corollary}{Corollary}[subsection]

\newtheorem{remark}{Remark}[subsection]
\newtheorem{conjecture}{Conjecture}

\theoremstyle{definition}
 %[section]
 %[section]
%\theoremstyle{remark}
 %[section] 
 \numberwithin{equation}{section}
\theoremstyle{definition}
\theoremstyle{remark}
 \numberwithin{equation}{section}

\renewcommand{\leq}{\leqslant}
\renewcommand{\geq}{\geqslant}

%\setlength{\textwidth}{35cc} \setlength{\textheight}{40cc}
%\usepackage{tikz}
%\usepackage[pdftex]{hyperref}
%\usetikzlibrary{matrix,arrows}
%% Blackboard bold:
\usepackage[
  locale = DE % comma as decimal mark
]{siunitx}

   %%% (etc.)

%% Calligraphic:
%\documentclass{book}

%% 

  %% <=> if and only if
  %% => then
 %% -->
 %% -->
\renewcommand{\and}{\quad \mbox{and} \quad}  %% "and" (text in equation)
%% Please, do not change the following four lines:
\renewcommand{\leq}{\leqslant}
\renewcommand{\geq}{\geqslant}

\begin{document}

\title{Distribution of Non-Wieferich primes in certain algebraic groups under ABC}
\date{\today}
\author{Subham Bhakta}
\address{Mathematisches Institut, Bunsenstrasse 3-5, D-37073 Germany}
\email{subham.bhakta@mathematik.uni-goettingen.de}

%\author{????}
%\address{????}
%\email{???}

\subjclass[2010]{Primary: 11G05; 11N13; Secondary 11R04, 11R29}  
\keywords{ABC conjecture, non-Wieferich primes, number fields, elliptic curves}
\begin{abstract} Under ABC, Silverman showed that there are infinitely many non-Wieferich primes with respect to any (non-trivial) base $a$. Recently Srinivas and Subramani proved an analogous result over number fields with trivial class group. In the first part of this article, we extend their result to any arbitrary number fields. Secondly, we give an asymptotic lower bound for the number of non-Wieferich prime ideals. Furthermore, we show a lower bound of same order is achievable for non-Wieferich prime ideals having norm congruent to $1 \pmod k.$ Lastly, we generalize Silverman's work for elliptic curves over arbitrary number fields following the treatment by K\"uhn and M\"uller.
\end{abstract}

\maketitle 

\section{Introduction}
Classically a rational prime $p$ is called non-Wieferich prime with respect to base $a$, if 
$$a^{p-1} \equiv 1 \pmod p \hspace{0.15cm} \text{and} \hspace{0.15cm} a^{p-1} \not\equiv 1 \pmod {p^2},$$ holds simultaneously. It is not known whether there are infinitely many non-Wieferich primes or not. Under ABC conjecture, it is known that there are infinitely many non-Wieferich primes with (non-trivial) base $a$. By non-trivial we mean $a\neq \pm 1.$ Silverman showed that there are at least $c\log x$ many non Wieferich primes up to $x$, for some constant $c>0,$ depending on base $a.$ A number field analog of the same problem was considered by K. Srinivas and M. Subramani in \cite{Srini}. In the preliminary section, we shall recall their notations and main results, which are the starting point of our article. For number fields with class number one, they showed that there are infinitely many non-Wieferich primes with base $\eta,$ under some certain conditions on $\eta$ and assuming ABC conjecture over number fields. In this article, we first extend their result to any arbitrary number field with relaxing some conditions on the unit $\eta.$ More precisely, in section 3 we prove the following 
\begin{theorem} \label{Thm 1} Suppose $K$ be any arbitrary number field and assume ABC holds over $K .$ Let $\eta$ be a unit such that $|\sigma(\eta)|<1$ for all but exactly one embedding $\sigma.$ Then there are infinitely many non-Wieferich prime ideals with respect to $\eta.$

\end{theorem}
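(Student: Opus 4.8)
The plan is to transport Silverman's argument to $K$, using the number-field ABC inequality recalled in the preliminary section together with a divisibility characterization of Wieferich prime ideals. Write $d_{\mathfrak p}$ for the order of $\eta$ in $(\mathcal O_K/\mathfrak p)^\times$ and $h:=H_K(\eta)$ for the height of $\eta$. The hypothesis on $\eta$ forces exactly one archimedean place $v_0$ of $K$ — which is then necessarily real, since complex places come in conjugate pairs — with $|\eta|_{v_0}>1$; as $\eta$ is a unit this gives $h=|\eta|_{v_0}>1$, so in particular $\eta$ is not a root of unity and $\eta^n-1\neq 0$ for all $n\ge 1$.

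The first step is a multiplicative criterion: \emph{$\mathfrak p$ is Wieferich with respect to $\eta$ if and only if $\mathfrak p^2\mid \eta^{d_{\mathfrak p}}-1$.} Since $\gcd(N\mathfrak p,\,N\mathfrak p-1)=1$, the reduction map $(\mathcal O_K/\mathfrak p^2)^\times\to(\mathcal O_K/\mathfrak p)^\times$ splits, so $(\mathcal O_K/\mathfrak p^2)^\times$ is the product of a cyclic group of order $N\mathfrak p-1$ and a $p$-group; hence the order of $\eta$ modulo $\mathfrak p^2$ is either $d_{\mathfrak p}$ or $p\,d_{\mathfrak p}$, and it divides $N\mathfrak p-1$ precisely when it equals $d_{\mathfrak p}$, i.e. precisely when $\mathfrak p^2\mid\eta^{d_{\mathfrak p}}-1$. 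The consequence I will use is: if $\mathfrak p$ is Wieferich and $\mathfrak p\mid\eta^n-1$, then $d_{\mathfrak p}\mid n$, so $\eta^{d_{\mathfrak p}}-1\mid\eta^n-1$ and therefore $v_{\mathfrak p}(\eta^n-1)\ge 2$.

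Next I would bound the radical from below via ABC. For each $n$, apply the number-field ABC inequality to the pairwise coprime triple $1+(\eta^n-1)=\eta^n$. As $\eta$ is a unit, $\eta^n$ and $1$ contribute no finite primes, so the conductor is $\prod_{\mathfrak p\mid\eta^n-1}N\mathfrak p$. The hypothesis on $\eta$ gives $|\sigma(\eta)^n-1|\to 1$ for every embedding $\sigma\ne\sigma_0$ while $|\sigma_0(\eta)^n-1|\asymp h^n$, so $H_K(1:\eta^n-1:\eta^n)\asymp h^n$ and also $|N_{K/\mathbb Q}(\eta^n-1)|\asymp h^n$; ABC then yields $\prod_{\mathfrak p\mid\eta^n-1}N\mathfrak p\gg_{K,\epsilon}h^{\,n(1-\epsilon)}$. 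Splitting this product according to whether $\mathfrak p$ is Wieferich, and using that each Wieferich $\mathfrak p\mid\eta^n-1$ in fact satisfies $\mathfrak p^2\mid\eta^n-1$, so that $\prod_{\mathfrak p\mid\eta^n-1,\ \mathfrak p\text{ Wieferich}}N\mathfrak p^2$ divides $|N_{K/\mathbb Q}(\eta^n-1)|\ll h^n$, we obtain
\[
\prod_{\substack{\mathfrak p\mid\eta^n-1\\ \mathfrak p\text{ non-Wieferich}}} N\mathfrak p\ \gg_{K,\epsilon}\ \frac{h^{\,n(1-\epsilon)}}{h^{\,n/2}}\ =\ h^{\,n(1/2-\epsilon)}\ \longrightarrow\ \infty .
\]
Were there only finitely many non-Wieferich prime ideals with respect to $\eta$, the left-hand side would be bounded above by the finite product of their norms for every $n$, a contradiction; hence there are infinitely many.

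I expect the delicate part to be the ABC step: choosing the triple and extracting $\prod_{\mathfrak p\mid\eta^n-1}N\mathfrak p\gg h^{n(1-\epsilon)}$ requires controlling the archimedean places uniformly in $n$, and this is exactly where the hypothesis ``$|\sigma(\eta)|<1$ for all but one $\sigma$'' enters — it makes $\eta^n-1$ large at the single place $v_0$ and bounded at all others, pinning down both $H_K(1:\eta^n-1:\eta^n)$ and $|N_{K/\mathbb Q}(\eta^n-1)|$ to order $h^n$. By contrast, the passage from the class-number-one case treated by Srinivas and Subramani to an arbitrary number field costs essentially nothing: every step above is phrased in terms of ideals and their norms, so no assumption on $\mathrm{Cl}(K)$ is needed.
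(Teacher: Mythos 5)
Your argument is correct, and it takes a genuinely different (and shorter) route than the paper. You apply ABC over $K$ itself to the element triple $1+(\eta^n-1)-\eta^n=0$: since $\eta^n$ and $1$ are units, the radical is supported on the primes dividing $\eta^n-1$, the single exceptional real embedding pins the height and $|N_{K/\mathbb{Q}}(\eta^n-1)|$ to order $h^n$, and the Wieferich primes dividing $\eta^n-1$ divide it to order at least $2$ (your order-lifting lemma, which is exactly the contrapositive of the paper's Claim 3.0.1 quoted from Srinivas--Subramani), so their contribution is at most $|N_{K/\mathbb{Q}}(\eta^n-1)|^{1/2}\ll h^{n/2}$, leaving the non-Wieferich part of the radical $\gg h^{n(1/2-\epsilon)}\to\infty$. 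The paper instead decomposes $(\eta^n-1)=U_nV_n$ into square-free and square-full \emph{ideals}, passes to the Hilbert class field $K'$ (where these become principal), applies ABC over $K'$ to $\sigma(\eta)^n=1+\sigma(u_n)\sigma(v_n)$ to show $N_{K'/\mathbb{Q}}(u_n)\to\infty$, and descends via the lifting Lemma 2.4.1. Your version shows that for bare infinitude the class-group detour is unnecessary, since every step is phrased in ideal norms and ABC is only ever invoked for elements of $K$; the paper's heavier setup ($K'$, the lifts $u_n,v_n$, and later the cyclotomic values $\phi_n(\eta)$) is what it reuses to get the quantitative $c\log x$ bounds of Theorems 1.0.2--1.0.3, which your argument does not directly yield. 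One small point to make explicit: with the paper's normalization, $\mathrm{rad}(abc)^{[K:\mathbb{Q}]}=\prod_{\mathfrak{p}\mid\eta^n-1}N(\mathfrak{p})^{v_{\mathfrak{p}}(p)}$, so your bound $\prod_{\mathfrak{p}\mid\eta^n-1}N(\mathfrak{p})\gg_{K,\epsilon}h^{n(1-\epsilon)}$ needs the remark that only the finitely many ramified primes have $v_{\mathfrak{p}}(p)>1$, so they change the product by a factor bounded in terms of $K$ alone; with that said, the normalizations of height and radical carry the same $1/[K:\mathbb{Q}]$ exponent and your inequality stands.
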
 

\noindent In the same section, we give an asymptotic lower bound generalizing Silverman's ideas over number fields. In other words, we prove

\begin{theorem} \label{Thm 2} Under the same assumption of the previous theorem, there are at least $c\log x$ many non Wieferich prime ideals with respect to $\eta$ of norm at most $x,$ where the $c>0$ is a constant depending on $K$ and the unit $\eta.$ 
\end{theorem}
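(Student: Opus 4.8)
The plan is to adapt Silverman's counting argument to $K$, working with the algebraic integers $\eta^n-1$ for $n$ ranging over an interval of length $\asymp\log x$ and extracting the bound from a \emph{weighted} count in which each relevant prime ideal is seen with multiplicity proportional to $\log x$.

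\emph{Setup.} By hypothesis there is exactly one archimedean place, say given by an embedding $\tau$, with $|\tau(\eta)|\ge 1$; since $\eta$ is a unit, $\prod_{v\mid\infty}|\eta|_v^{n_v}=1$, which forces $|\tau(\eta)|>1$ and also forces $\tau$ to be real, for a complex $\tau$ would come with its conjugate, a second embedding of absolute value $>1$. Put $B:=|\tau(\eta)|>1$. Then for $n\ge 1$ the factor at $\tau$ dominates and
$$\mathfrak N(\eta^n-1):=\big|N_{K/\mathbb Q}(\eta^n-1)\big|=\prod_{\sigma}\big|\sigma(\eta)^n-1\big|\ \asymp_{K,\eta}\ B^n,$$
the remaining factors lying in a fixed bounded interval. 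I will also use the local fact that if $\mathfrak p$ is Wieferich with respect to $\eta$ and $\mathfrak p\mid(\eta^n-1)$, then $\mathfrak p^2\mid(\eta^n-1)$: letting $d_1,d_2$ be the orders of $\eta$ in $(\mathcal O_K/\mathfrak p)^{\times}$ and $(\mathcal O_K/\mathfrak p^2)^{\times}$, the kernel of the reduction map is an elementary abelian $p$-group ($p$ the residue characteristic), so $d_2/d_1\in\{1,p\}$; the Wieferich hypothesis gives $d_2\mid N\mathfrak p-1$ while $p\nmid N\mathfrak p-1$, hence $d_2=d_1\mid n$ and $\eta^n\equiv 1\pmod{\mathfrak p^2}$.

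\emph{Input from ABC.} Apply ABC over $K$ to the triple $1+(\eta^n-1)+(-\eta^n)=0$. Because $\eta$ is a unit, both $1$ and $\eta^n$ are units at every finite place, so the conductor of the triple is exactly the radical of the ideal $(\eta^n-1)$, of norm $R_n:=\prod_{\mathfrak p\mid(\eta^n-1)}N\mathfrak p$, while the height of the triple is $\asymp_{K,\eta}B^n$ by the computation above. Absorbing the discriminant of $K$ into the implied constant, ABC gives, for each fixed $\epsilon>0$, that $R_n\gg_{K,\eta,\epsilon}B^{n(1-\epsilon)}$. On the other hand, by the local fact each Wieferich $\mathfrak p\mid(\eta^n-1)$ has $\mathfrak p^2\mid(\eta^n-1)$, and since these prime ideals are distinct the product of their norms is at most $\mathfrak N(\eta^n-1)^{1/2}\ll B^{n/2}$. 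Dividing, the product of $N\mathfrak p$ over the \emph{non}-Wieferich $\mathfrak p\mid(\eta^n-1)$ is $\gg_{K,\eta,\epsilon}B^{n(1/2-\epsilon)}$, so fixing $\epsilon<1/2$,
$$\sum_{\substack{\mathfrak p\mid(\eta^n-1)\\ \mathfrak p\ \text{non-Wieferich}}}\log N\mathfrak p\ \ge\ \big(\tfrac12-\epsilon\big)\,n\log B-O_{K,\eta}(1)\qquad(n\ge n_0).$$

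\emph{Counting.} Let $D:=\big\lfloor\tfrac{\log x}{2\log B}\big\rfloor$, so that for $1\le n\le D$ every prime dividing $(\eta^n-1)$ has norm $\le\mathfrak N(\eta^n-1)\ll B^D\le x^{1/2}$, hence $\le x$ once $x$ is large; thus every non-Wieferich prime ideal occurring below is counted by the quantity $\pi_\eta(x)$ we wish to bound. Summing the last display over $n_0\le n\le D$ gives $S:=\sum_{n_0\le n\le D}\sum_{\mathfrak p\mid(\eta^n-1),\ \text{non-Wief.}}\log N\mathfrak p\gg_{K,\eta}D^2$. For the reverse bound, interchange the order of summation: a non-Wieferich $\mathfrak p$ contributes to the $n$-th inner sum exactly when $d:=\operatorname{ord}_{\mathfrak p}(\eta)$ divides $n$, i.e.\ with multiplicity $\le\lfloor D/d\rfloor$, and since $\mathfrak p\mid(\eta^d-1)$ one has $\log N\mathfrak p\le d\log B+O(1)$; hence $\mathfrak p$ contributes at most $\lfloor D/d\rfloor(d\log B+O(1))\le D\log B+O(D)$ to $S$ — the order $d$ cancels, which is precisely why one gets $\log x$ rather than $\log\log x$ — and only $\mathfrak p$ with $d\le D$, necessarily of norm $\ll B^D\le x^{1/2}\le x$, contribute. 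Therefore $S\le\big(D\log B+O(D)\big)\,\pi_\eta(x)$, and comparing with the lower bound yields $\pi_\eta(x)\gg_{K,\eta}D\gg_{K,\eta}\log x$, which is the assertion.

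\emph{Main obstacle.} The delicate points are all in the ABC step: pinning down the correct form of ABC over $K$ for the triple $(1,\eta^n-1,-\eta^n)$ — in particular that its conductor is exactly the radical of $(\eta^n-1)$ and that the archimedean part of its height is $\asymp B^n$ with $B>1$ guaranteed by the one-dominant-embedding hypothesis — together with the uniform verification of the local implication "$\mathfrak p$ Wieferich and $\mathfrak p\mid(\eta^n-1)$ $\Rightarrow$ $\mathfrak p^2\mid(\eta^n-1)$" across all residue characteristics. Once these are in place, the weighted double count over $n\le D$ is routine.
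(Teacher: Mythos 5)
Your proposal is correct, but it takes a genuinely different route from the paper. The paper factors $(\eta^n-1)$ and $\phi_n(\eta)$ into squarefree and squarefull parts, passes to the Hilbert class field $K'$ to make those parts principal so that ABC can be applied to \emph{elements}, proves the growth estimate for cyclotomic polynomials (Proposition 2.6.1), shows that a prime dividing the squarefree part of $\phi_n(\eta)$ with $(n,N(\pi))=1$ has order exactly $n$ (so distinct $n$ give distinct primes), and finally counts the $n\le \log_{|\eta|}x$ with $\phi(n)\ge\epsilon n$ via Lemma 6 of \cite{Sil}. You instead apply ABC over $K$ directly to the element triple $(1,\eta^n-1,-\eta^n)\subset\mathcal{O}_K$ and only ever use ideal-theoretic data (the radical and the valuations $v_{\mathfrak{p}}(\eta^n-1)$), so the class-number obstruction that forces the paper into $K'$ never arises; and in place of cyclotomic polynomials and distinctness-of-order you run a weighted double count: ABC together with the local fact that a Wieferich $\mathfrak{p}\mid(\eta^n-1)$ satisfies $\mathfrak{p}^2\mid(\eta^n-1)$ gives $\sum_{\mathfrak{p}\mid(\eta^n-1),\ \mathrm{non\text{-}Wief.}}\log N\mathfrak{p}\gg n$, summing over $n\le D\asymp\log x$ gives $\gg D^2$, while an individual prime of order $d$ contributes at most $\lfloor D/d\rfloor\big(d\log B+O(1)\big)\ll D$, whence the number of distinct non-Wieferich primes of norm at most $x$ is $\gg D\gg\log x$. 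The trade-off: the paper's cyclotomic route also pins down the order of $\eta$ modulo the primes it produces (which is exactly what powers Theorem 1.0.3 on $N(\pi)\equiv1\pmod k$), a refinement your count does not yield directly, whereas your argument is the more elementary and self-contained proof of Theorem 1.0.2, dispensing with the Hilbert class field, the cyclotomic growth bound and the Euler-$\phi$ counting step. Two small points to tidy, neither a gap: with the paper's normalization the height of your triple is $\asymp B^{n/[K:\mathbb{Q}]}$ and the radical carries the weights $v_{\mathfrak{p}}(p)$, but raising the ABC inequality to the $[K:\mathbb{Q}]$-th power and absorbing the finitely many ramified primes into the constant recovers exactly your bound $R_n\gg B^{n(1-\epsilon)}$; and your product-formula deduction that $|\tau(\eta)|>1$ tacitly assumes $K$ has positive unit rank, the same implicit assumption the paper makes.
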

\noindent In section $4$ we discuss non-Wieferich primes in certain congruence classes. Under ABC, Graves-Murty (see \cite{Murty}) first showed that there are at least $c \frac{\log x}{\log \log x}$ many non Wieferich primes up to $x.$ Later Ding and Chen in [6], generalized the result by showing existence of at least $c \frac{\log x}{\log \log x}(\log\log x)^M$ many primes up to $x$, for any positive integer $M.$ We generalize both of these results to arbitrary number fields with an asymptotic lower bound of order $c\log x.$ More precisely, we prove
\begin{theorem} \label{Thm 3}Let $k$ be a fixed natural number. Under the same assumption of Theorem 1.0.1, there are at least $c\log x$ many non Wieferich prime ideals $\pi$ with respect to $\eta,$ such that $N(\pi) \leq x$ and $N(\pi) \equiv 1 \pmod k,$ where the constant $c>0$ depends only on $K$ and unit $\eta.$

\end{theorem}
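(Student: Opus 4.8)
The plan is to refine the counting argument behind Theorem 1.0.2 so that it tracks the residue class of the norm modulo $k$. Recall the mechanism: for a unit $\eta$ with $|\sigma(\eta)|<1$ at all embeddings but one, the sequence $\eta^n-1$ (suitably interpreted via the principal ideal it generates, or via a fixed power that becomes principal to handle non-trivial class group) grows in norm like $C^n$ for some $C>1$, while an application of ABC over $K$ forces the radical of $\eta^n-1$ to be large — say $\gg N(\eta^n-1)^{1-\varepsilon}$. A prime ideal $\pi$ dividing $\eta^n-1$ is Wieferich (with respect to $\eta$) exactly when $\pi^2 \mid \eta^{n_0}-1$ for the relevant exponent $n_0$, so the total contribution of Wieferich primes to $N(\eta^n-1)$ is controlled, and one extracts $\gg \log x$ distinct non-Wieferich primes among the divisors of $\eta^n-1$ for $n$ up to roughly $\log x$.

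To force $N(\pi)\equiv 1 \pmod k$, the first step is to work along a subsequence of exponents: restrict to $n$ of the form $n = k \cdot m$ (or more robustly $n$ divisible by the order-related quantity so that $\zeta_k$-type congruences kick in). The key number-theoretic input is that if $\pi \mid \eta^{n}-1$ then the multiplicative order of $\eta$ modulo $\pi$ divides $n$; choosing $n$ divisible by $k$ does not by itself pin down $N(\pi)\bmod k$, so instead I would adjoin a $k$-th root of unity: pass to $L = K(\zeta_k)$, and observe that a prime $\pi$ of $K$ with $N(\pi)\equiv 1\pmod k$ is precisely one that splits completely in $K(\zeta_k)/K$. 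Then run the ABC-based divisor-counting argument for the quantity $\eta^{n}-1$ but only harvest primes that split in $L$; equivalently, count prime ideals of $L$ above such $\pi$ dividing $\eta^n - 1$ and descend. The radical lower bound from ABC is insensitive to this restriction up to a bounded-index factor, because the primes ramifying in $L/K$ or inert/partially-split form a set of density $1 - 1/[L:K]$ but contribute only $O(1)$ to each $\log N(\eta^n-1)$ after we also invoke the analogous growth estimate over $L$.

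Concretely the steps are: (1) fix a positive integer $e$ such that $\eta^e$ generates a principal ideal and set $a_n := \eta^{en}-1$, so $N(a_n) \asymp C^{en}$; (2) apply ABC over $K$ to the triple $(\eta^{en}, -1, 1-\eta^{en})$ — after clearing to an $S$-integer identity — to get $\operatorname{rad}(a_n) \gg_\varepsilon N(a_n)^{1-\varepsilon}$; (3) split the prime divisors of $a_n$ into those that split completely in $K(\zeta_k)$ and the rest, and show via the growth rate over $K(\zeta_k)$ that the "rest" contribute $O(\varepsilon \log N(a_n))$ to $\log\operatorname{rad}(a_n)$, so the split primes already carry almost all the radical; (4) among the split primes dividing $a_n$, bound the Wieferich ones: each Wieferich prime $\pi$ has $\pi^2 \mid a_{n_\pi}$ for a unique minimal $n_\pi$, and $\sum_{\pi \text{ Wief.}} \log N(\pi) \ll \log N(a_n)$ summed appropriately, exactly as in Theorem 1.0.2, so the non-Wieferich split primes dividing $\bigcup_{n\le \log_C x} a_n$ number $\gg \log x$; (5) read off that these give $\gg \log x$ non-Wieferich prime ideals $\pi$ of $K$ with $N(\pi)\le x$ and $N(\pi)\equiv 1\pmod k$.

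The main obstacle I anticipate is step (3): ensuring that the restriction to norm-$\equiv 1 \pmod k$ primes does not destroy the radical lower bound. One must rule out the degenerate possibility that almost all of $\operatorname{rad}(a_n)$ is supported on primes that are \emph{not} split in $K(\zeta_k)$. The clean way around this is to apply ABC directly over the field $L = K(\zeta_k)$ to $\eta^{en'}-1$ for a suitable $e, n'$ (with $\eta^{e}$ still a unit of $L$ of the right multiplicative type — note $|\sigma(\eta)|<1$ for all but one embedding is preserved under the base change since the archimedean places of $L$ lie over those of $K$, and exactly the places over the distinguished one can have absolute value $>1$; a minor check is needed that still \emph{exactly one} of them does, or one works with "at least one" and absorbs the difference). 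Primes of $L$ of residue degree $1$ over $\mathbb{Q}$ lying over degree-$1$ primes of $K$ then descend to the primes we want, and the ABC radical bound over $L$ handles everything uniformly. Everything else is a routine adaptation of the arguments already carried out for Theorems 1.0.1 and 1.0.2.
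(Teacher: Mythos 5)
Your overall counting framework matches the paper's in outline (ABC forces the squarefree part of a rapidly growing quantity to be large, and distinct exponents yield distinct primes), but the step that is supposed to produce the congruence $N(\pi)\equiv 1\pmod k$ --- your step (3) --- is a genuine gap, and the detour through $L=K(\zeta_k)$ does not close it. For a fixed exponent there is no reason the prime divisors of $\eta^{en}-1$ that split completely in $K(\zeta_k)$ should carry most (or any) of the radical: already in the rational analogue with base $2$ and $k=3$ one has $2^{11}-1=23\cdot 89$ with $23\equiv 89\equiv 2\pmod 3$, so the ``split'' part of the radical can be trivial for a given $n$. Applying ABC over $L$ instead of $K$ does not help, because the radical bound over $L$ counts all primes of $L$ dividing $\eta^{en}-1$ and makes no distinction between primes of residue degree one over $\mathbb{Q}$ (the only ones that descend to primes of $K$ with $N(\pi)\equiv 1\pmod k$) and the rest; you would still need to show that the degree-one part of the radical is substantial for many $n$, which is exactly the unproved assertion you flagged, and you give no mechanism (individual or on average) for it.

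The missing idea is the exact-order mechanism via cyclotomic values, which is how the paper (following Graves--Murty) proceeds: instead of $\eta^{nk}-1$ one works with its primitive factor $\phi_{nk}(\eta)$. By Lemma \ref{cycl 1}, a prime $\pi$ dividing the squarefree part of $\phi_{nk}(\eta)$ with $(nk,N(\pi))=1$ has $\eta$ of multiplicative order exactly $nk$ modulo $\pi$, whence $nk\mid N(\pi)-1$ and in particular $N(\pi)\equiv 1\pmod k$ --- no base change to $K(\zeta_k)$ is needed, and the class-group issue is handled once and for all through the Hilbert class field as in Section 3. The quantitative input is then Lemma \ref{norm ineq 2} (the lower bound for $N_{K'/\mathbb{Q}}(u'_{nk})$ coming from Proposition \ref{Growth}) together with the elementary observation that, since $k$ is fixed, the set of $n$ with $\phi(nk)\geq \epsilon nk$ has positive density; Lemma 6 of Silverman then converts the $\gg \log x$ admissible exponents $nk\leq \log_{|\eta|}x$ into $\gg\log x$ pairwise distinct non-Wieferich primes of norm at most $x$ in the required congruence class. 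You in fact observed that $\pi\mid\eta^{nk}-1$ alone does not pin down $N(\pi)\bmod k$; the correct remedy is to pass to the primitive cyclotomic factor, not to the field $K(\zeta_k)$.
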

\noindent In the last section, we discuss a more general problem. Suppose $G$ be a commutative algebraic group, and $P \in G(K)$ be a point of infinite order. An analogous problem in this generalized situation asks whether $N_{\mathfrak{p}}P \equiv 1 \pmod {\mathfrak{p}^2},$ where $N_{\mathfrak{p}}=|G(\mathbb{F}_{\mathfrak{p}})|.$ For instance when we take $G$ to be the multiplicative group $\mathbb{G}_m$, and $P \in \mathbb{G}(K)$ to be a \textit{non-torsion} unit, the problem then asks about order of the non-unit when reduced modulo $\mathfrak{p}^2.$ In other words, our section 3 and 4 are devoted for $\mathbb{G}_m.$ Silverman studied this general problem over elliptic curves. He showed that, under ABC there are infinitely many (in fact an asymptotic lower bound of order $c\sqrt{\log x}$) non-Wieferich primes for elliptic curves with $j$ invariant $0$ and $1728$. The same arguments could not be applied to other elliptic curves due to the unavailability of an inequality involving the height of points. This case was later settled by K\"uhn and M\"uller \cite{Kuhn}. We shall discuss their result and prove the following
\begin{theorem} Let $E$ be an elliptic curve defined over an arbitrary number field $K$ and $P \in E(K)$ be a point of infinite order. If the ABC conjecture over $K$ is true, then are at least $c \sqrt{\log x}$ many non-Wieferich primes with respect to $P$ of norm at most $x.$ Here $c>0$ be a constant, depending only on $K$ and $P.$

\end{theorem}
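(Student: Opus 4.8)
The plan is to adapt Silverman's original argument for elliptic curves with $j$-invariant $0$ or $1728$ to the general situation, using the height inequality of K\"uhn--M\"uller in place of the special identities Silverman had available. First I would set up the arithmetic of the denominators: for $P \in E(K)$ of infinite order, write the multiples $nP$ in terms of the elliptic divisibility-type sequence, so that each prime ideal $\mathfrak{p}$ dividing the denominator ideal $\mathcal{D}_n$ of $nP$ records the fact that $P$ reduces into the kernel of reduction modulo $\mathfrak{p}$. The key point is that $\mathfrak{p}$ is a \emph{Wieferich prime} with respect to $P$ precisely when $\mathfrak{p}^2 \mid \mathcal{D}_n$ for the relevant $n$ (essentially $n = N_\mathfrak{p}$, or the order of $P$ in $E(\mathbb{F}_\mathfrak{p})$), so counting non-Wieferich primes amounts to controlling how often $\mathcal{D}_n$ can be "almost a perfect square" in the ideal-theoretic sense.

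Next I would invoke the ABC conjecture over $K$ applied to an appropriate triple built from the $x$- and $y$-coordinates of $nP$ (the standard choice is something like $(a_n, b_n, a_n - b_n)$ coming from the Weierstrass equation, so that the radical of the product appears), to get an upper bound on the logarithmic height $h(nP)$ in terms of $\log \operatorname{rad}(\mathcal{D}_n)$, up to $\varepsilon$-terms and constants depending on $K$ and $E$. On the other hand, the theory of canonical heights gives $h(nP) = \hat{h}(P)\, n^2 + O(1)$, growing quadratically. Comparing the two, if \emph{every} prime dividing $\mathcal{D}_n$ up to some bound were Wieferich, the radical would be much smaller than $\mathcal{D}_n$ itself, and one derives $n^2 \ll \log N(\mathcal{D}_n) + (\text{radical contribution})$; pushing this through for a dyadic range of $n \le N$ forces a non-Wieferich prime to appear among the divisors of $\mathcal{D}_n$ for many $n$, and since a fixed prime $\mathfrak{p}$ divides $\mathcal{D}_n$ only for $n$ in a single residue class (multiples of the order of $P$ mod $\mathfrak{p}$), distinct ranges of $n$ produce distinct primes. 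A careful bookkeeping of the norm sizes — each such prime has norm at most $x$ when $n$ ranges up to roughly $\sqrt{\log x}$ — yields the stated count $c\sqrt{\log x}$, the square root reflecting the quadratic growth of the canonical height against the logarithmic size of $\mathcal{D}_n$.

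The main obstacle, and the reason the argument did not go through before K\"uhn--M\"uller, is obtaining the correct height inequality relating $h(nP)$ and the radical of the denominator for an \emph{arbitrary} elliptic curve over an \emph{arbitrary} number field: Silverman's original proof for $j = 0, 1728$ exploited explicit formulas (a clean relation between $\mathcal{D}_n$ and perfect powers) that fail in general, and one must instead use the K\"uhn--M\"uller bound, which controls the difference between the naive height and the contribution of the "squarefree part" of the denominator ideal via the ABC conjecture. I would therefore state their inequality precisely as a black box, check that its error terms ($\varepsilon$-dependence, class-number and regulator constants, contributions from primes of bad reduction and from the finitely many archimedean places) are all absorbable into the constant $c$, and verify that the residue-class argument for distinctness of the produced primes is insensitive to the field $K$ (only the degree $[K:\mathbb{Q}]$ and the structure of $E(K)_{\mathrm{tors}}$ enter, both harmless). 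Once the height inequality is in hand, the counting is essentially Silverman's, transplanted to ideals and norms.
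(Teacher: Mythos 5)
Your proposal follows essentially the same route as the paper: use ABC over $K$ via a K\"uhn--M\"uller/Vojta-type height inequality to show the squarefull part of the denominator ideal of $nP$ is negligible, compare with the quadratic growth $n^2\hat{h}(P)$ of the canonical height, and extract pairwise distinct non-Wieferich primes from the denominators for $n \ll \sqrt{\log x}$. The only point where you are vaguer than the paper is the distinctness bookkeeping, which the paper handles by bounding from below the primitive part $D_n$ of $d_n$ and invoking a formal-group lemma giving $m_{\mathfrak{p}} = n$; your dyadic/residue-class remark is the same Silverman-style argument and would need that primitive-part lower bound to be carried out.
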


\section{preliminary} 
\subsection{Notations} Let $K$ be a number field, and $\mathcal{O}_K$ be its ring of integers. Denote $M_K$ to be the (up-to equivalence) set of valuations in $K$. We further consider $M_{K, \infty}$ as the set of archimdean places, and $M'_K$ as the set of finite places of $K,$ up-to equivalence. For any $\alpha \in \mathcal{O}_K$ and $\mathfrak{p}\in M'_K,$ we denote 
\[ ||\alpha||_{\mathfrak{p}}= N_{K/\mathbb{Q}}(\mathfrak{p})^{-v_{\mathfrak{p}}(\alpha)},\]
where $N_{K/\mathbb{Q}}(\mathfrak{p})$ is the index of $\mathfrak{p}$ in $\mathcal{O}_K,$ and $v_{\mathfrak{p}}(\alpha)$ is the maximum power of $\mathfrak{p}$ dividing $\alpha.$ For an element $\alpha \in \mathcal{O}_K$ we denote 
$$N_{K/\mathbb{Q}}(\alpha)=\prod_{\sigma \in M_K} \sigma(\alpha).$$
Throughout this article we shall denote $N_{K/\mathbb{Q}}(I) := N(I)$ for any ideal $I$ in $\mathcal{O}_K.$
\begin{definition}[Weil height]
Let $\alpha \in \mathbb{P}^1(K)$, then the Weil height of $\alpha$ is defined to be,
\[H(\alpha)=\Big(\prod_{v \in M_K}  \max \{||\alpha||_v,1\}\Big)^{\frac{1}{[K:\mathbb{Q}]}}.\]
In particular, for a point $P=[a:c] \in \mathbb{P}^1(K)$ we have
\[H(P)=\Big(\prod_{v \in M_K} \max\{||a||_v, ||c||_v\}\Big)^{\frac{1}{[K:\mathbb{Q}]}}.\]
Moreover, for a triple $a,b,c \in K$ we define
\[H(a,b,c)=\Big(\prod_{v \in M_K} \max\{||a||_v, ||b||_v, ||c||_v\}\Big)^{\frac{1}{[K:\mathbb{Q}]}}.\]
\end{definition}

%\begin{remark} We are omitting details, but one can see the definitions of $H(.)$ and $\text{rad}(.)$ are well defined, in other words they are invariant under considering $a,b$ or $c$ in some bigger extension. Moreover the inequality $H(a,c,a+c) \ll_{K} H([a:c])$ holds true for any $a,c \in K.$
%\end{remark}

\subsection{ABC over number fields} Let $a,b,c$ be elements in $K$ such that $a+b+c=0.$ Define the radical of an element $\alpha \in \mathcal{O}_K$ by
\[\text{rad}(\alpha)= \Big(\prod_{\mathfrak{p} \hspace{0.07cm} \text{prime in} \hspace{0.1cm} \mathcal{O}_K, \hspace{0.07cm}\mathfrak{p} \mid \alpha} N_{K/\mathbb{Q}}(\mathfrak{p})^{v_{\mathfrak{p}}(p)}\Big)^{\frac{1}{[K:\mathbb{Q}]}}\]

\begin{conjecture}[ABC over number fields]\label{ABC} Let $a,b,c$ be a triple in $K$ such that $a+b+c=0$. Then for any $\epsilon>0,$
\[H(a,b,c) \ll_{\epsilon, K} \big(\mathrm{rad}(abc)\big)^{1+\epsilon}.\]
\end{conjecture}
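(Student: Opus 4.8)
\emph{Remark on the status of the statement.} The assertion to be established is the ABC conjecture over an arbitrary number field $K$, which is a deep open problem (a special case of Vojta's height conjecture); the present paper uses it only as a hypothesis, so what follows is a sketch of the line of attack one would pursue rather than a complete proof. The natural plan is to descend the inequality to the classical ABC conjecture over $\mathbb{Q}$. The crude first attempt --- apply the norm $N_{K/\mathbb{Q}}$ to a relation $a+b+c=0$ --- fails at once, because $N_{K/\mathbb{Q}}$ is multiplicative rather than additive, so $N_{K/\mathbb{Q}}(a)+N_{K/\mathbb{Q}}(b)+N_{K/\mathbb{Q}}(c)$ need not vanish and there is no sum-zero rational triple to feed into ABC over $\mathbb{Q}$.

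First I would instead homogenize and normalize. Scaling by the $K$-unit $1/b$, one reduces to triples of the shape $(t,\,1-t,\,-1)$ with $t=-a/b\in\mathbb{P}^1(K)\setminus\{0,1,\infty\}$, so that $\mathrm{rad}(abc)$ becomes --- up to the archimedean and unit contributions, which are controlled by the height itself --- the counting function of the reduced divisor $(0)+(1)+(\infty)$ evaluated at $t$, while $H(a,b,c)$ becomes $H(t)$. Thus $K$-ABC is exactly Vojta's inequality $h(t)\ll_{\epsilon,K} N^{(1)}_{\{0,1,\infty\}}(t)+\epsilon\,h(t)+O_K(1)$ for $\mathbb{P}^1$ minus three points over $K$. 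From here two standard strategies present themselves: (i) take a primitive element $K=\mathbb{Q}(\theta)$, replace the $K$-point $t$ by the bounded-degree system of conjugate equations it satisfies over $\mathbb{Q}$ --- equivalently, pass to the Weil restriction $\mathrm{Res}_{K/\mathbb{Q}}(\mathbb{P}^1\setminus\{0,1,\infty\})$ --- and try to invoke a uniform (or many-variable Browkin--Brzezi\'nski $n$-term) ABC over $\mathbb{Q}$ in which the dependence on $[K:\mathbb{Q}]$ and on the discriminant $d_K$ is explicit; (ii) compose $t$ with a Belyi map $\beta\colon\mathbb{P}^1\to\mathbb{P}^1$ unramified outside $\{0,1,\infty\}$, trading the three-point inequality for a statement about $S$-integral points on a curve and ultimately for Vojta's conjecture over $K$. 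In either route one must track the radical under base change: ramified primes and splitting make $\mathrm{rad}_K(\alpha)$ and $\mathrm{rad}_{\mathbb{Q}}(N_{K/\mathbb{Q}}\alpha)$ differ only by a factor supported on the relative different $\mathfrak{d}_{K/\mathbb{Q}}$, and the whole point of the bookkeeping is to absorb every such loss into the implied constant $\ll_{\epsilon,K}$ rather than into the exponent $1+\epsilon$.

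The hard part is that none of these descents actually closes: ABC over $K$ is \emph{not} known to follow from ABC over $\mathbb{Q}$. Strategy (i) reduces it to a genuinely stronger hypothesis --- an ABC conjecture over $\mathbb{Q}$ with uniform control in the number of terms and in the field, or the $n$-conjecture --- and strategy (ii) funnels it into Vojta's conjecture; both targets are themselves open. Consequently the only proof one can realistically write is conditional in a second, stronger sense: assume Vojta's height inequality over $K$ (which contains the present statement as the case of $\mathbb{P}^1$ with the three-point divisor $(0)+(1)+(\infty)$), or assume an explicitly uniform ABC over $\mathbb{Q}$, and then the remaining work is precisely the place-counting and radical-comparison under restriction of scalars sketched above. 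Absent such an input, the statement must stand --- as it does in this paper --- as an assumed conjecture, and this is exactly why the later theorems are phrased conditionally on it.
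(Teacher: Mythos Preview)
Your assessment is correct: the statement labelled \textsf{Conjecture~\ref{ABC}} in the paper is precisely that---a conjecture---and the paper offers no proof of it whatsoever; it is introduced in the preliminary section solely as a standing hypothesis on which Theorems~1.0.1--1.0.4 are conditioned. There is therefore nothing in the paper to compare your sketch against.

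Your extended commentary on possible descent strategies (Weil restriction, Belyi maps, reduction to Vojta) is accurate as far as it goes, and your conclusion---that none of these routes is known to close and that the statement must remain an assumption---is exactly the status the paper gives it. The only caveat is one of framing: since the task was to supply a proof of the stated item, the cleanest response is simply to observe that the item is a conjecture and admits no proof, rather than to embark on a survey of conditional approaches. The survey is not wrong, but it is supererogatory here.
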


\begin{remark} We are omitting details, but one can see the definitions of $H(.)$ and $\mathrm{rad}(.)$ are well-defined, in other words they are invariant under considering $a,b$ or $c$ in some bigger extension. Moreover the inequality $H(a,c,a+c) \ll_{K} H([a:c])$ holds true for any $a,c \in K.$
\end{remark}

\subsection{Non Wieferich primes and units} A prime ideal $\pi$ of $\mathcal{O}_K$ is said to be non-Wieferich prime with respect to base $\eta$, if the following
\[\eta^{N(\pi)-1} \equiv 1 \pmod \pi, \hspace{0.1cm} \eta^{N(\pi)-1} \not\equiv \pi^2 \pmod \pi\] holds simultaneously. It is known that $\mathcal{O}^{*}_K$, the group of units of $\mathcal{O}_K$ has rank $r+s-1$ as a $\mathbb{Z}$-module, where $r$ is the number of real embeddings and $s$ the number of conjugate pairs of complex embeddings of $K.$ Some of the theorems which we wish to prove are about certain units. These units have absolute value less than for one all but exactly one embedding. So we need to ensure existence of such a unit for our sake. For which we can use the following lemma from \cite{Ram}
\begin{lemma} \label{Ram book} Let $\sigma \in M_K$ be a real archimedian place of $K.$ Then there exist a unit $\eta \in \mathcal{O}_K$ such that,
\[|\sigma(\eta)|>1, |\tau(\eta)|<1 \hspace{0.1cm}\text{for all} \hspace{0.15cm}\tau \in M_{K, \infty}-\{\sigma\}.\]
\end{lemma}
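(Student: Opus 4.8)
The statement is the standard sharpening of Dirichlet's unit theorem, and the plan is to read it off the geometry of the logarithmic embedding. Assume $[K:\mathbb{Q}]\ge 2$ (for $K=\mathbb{Q}$ there is no archimedean place other than $\sigma$ and no unit of absolute value $>1$, so the claim only has content in this case). Write $n=r+s$, fix representatives $\sigma=\sigma_1,\dots,\sigma_n$ of $M_{K,\infty}$ with $\sigma_1,\dots,\sigma_r$ real and $\sigma_{r+1},\dots,\sigma_n$ complex, and let $L\colon \mathcal{O}_K^{*}\to\mathbb{R}^{n}$ be the logarithmic embedding whose $i$-th coordinate is $\log|\sigma_i(u)|$ for $i\le r$ and $2\log|\sigma_i(u)|$ for $i>r$. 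By Dirichlet's theorem $\Lambda:=L(\mathcal{O}_K^{*})$ is a lattice of full rank $n-1$ in the trace-zero hyperplane $H=\{x\in\mathbb{R}^{n}:\textstyle\sum_i x_i=0\}$. Since the factor $2$ on the complex coordinates does not affect signs, for every archimedean place one has $|\sigma_i(u)|>1\iff L(u)_i>0$ and $|\sigma_i(u)|<1\iff L(u)_i<0$. Hence it suffices to produce a unit $\eta$ with $L(\eta)$ in the cone
\[
C=\{x\in H:\ x_1>0,\ x_i<0\ \text{for }2\le i\le n\}.
\]

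This cone is nonempty — the vector with first coordinate $n-1$ and all remaining coordinates equal to $-1$ lies in it — it is relatively open in $H$, and it is stable under scaling by positive reals. The only real content is therefore the elementary fact that a full-rank lattice in $H$ meets every nonempty relatively open cone. To see this, let $\rho$ be the covering radius of $\Lambda$ in $H$, pick $x_0\in C$ and $\delta>0$ with the open ball $B_H(x_0,\delta)\subseteq C$, and take $t>\rho/\delta$. Then $tx_0\in C$ by homogeneity, and some $\lambda\in\Lambda$ satisfies $\|\lambda-tx_0\|\le\rho<t\delta$, so $\lambda\in B_H(tx_0,t\delta)=t\cdot B_H(x_0,\delta)\subseteq C$. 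Choosing any unit $\eta$ with $L(\eta)=\lambda$, the first coordinate of $\lambda$ being positive gives $|\sigma(\eta)|>1$, and the remaining coordinates being negative give $|\tau(\eta)|<1$ for every $\tau\in M_{K,\infty}\setminus\{\sigma\}$, which is exactly the assertion.

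I do not anticipate a genuine obstacle here: everything rests on Dirichlet's unit theorem together with the one-line lattice-meets-open-cone argument. The only points needing care are to include the complex places correctly in the sign bookkeeping — handled automatically by the harmless factor of $2$ — and to dispose of the degenerate case $K=\mathbb{Q}$ separately.
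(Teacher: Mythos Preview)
Your argument is correct and is the standard route to this classical fact: embed the units logarithmically, invoke Dirichlet to get a full-rank lattice in the trace-zero hyperplane, and observe that such a lattice meets every nonempty open cone via a covering-radius/scaling argument. The paper itself does not supply a proof of this lemma at all; it simply quotes the statement from Esmonde--Murty \cite{Ram}, so there is nothing to compare your method against beyond noting that your proof is exactly the one that reference gives. One minor quibble: your parenthetical on $K=\mathbb{Q}$ says the claim ``only has content'' for $[K:\mathbb{Q}]\ge 2$, but in fact for $K=\mathbb{Q}$ the claim is false rather than contentless (there is no unit with $|\sigma(\eta)|>1$); this is harmless since the paper is implicitly working with fields possessing units of infinite order.
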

\noindent We can say even more,
\begin{proposition} Density of the units satisfying the condition of Lemma \ref{Ram book} is
\[\frac{r+s-1}{2^{r+s-1}}.\]
\end{proposition}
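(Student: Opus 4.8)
The plan is to count, among the $2^{r+s-1}$ sign patterns of the logarithmic embedding of the unit group, how many correspond to units satisfying the conclusion of Lemma~\ref{Ram book} for \emph{some} archimedean place $\sigma$. Recall the Dirichlet logarithmic map $L\colon \mathcal{O}_K^\ast \to \mathbb{R}^{r+s}$ sending $\eta \mapsto (\log|\sigma_1(\eta)|, \dots, \log|\sigma_r(\eta)|, 2\log|\sigma_{r+1}(\eta)|, \dots)$, whose image is a full-rank lattice $\Lambda$ inside the trace-zero hyperplane $H = \{x : \sum_i x_i = 0\} \subset \mathbb{R}^{r+s}$, with kernel the finite group of roots of unity. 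A unit $\eta$ satisfies $|\sigma(\eta)| < 1$ for all archimedean $\sigma$ except exactly one (at which necessarily $|\sigma(\eta)| > 1$, since the product of all $|\sigma(\eta)|$ weighted appropriately is $1$) precisely when $L(\eta)$ lies in one of the $r+s$ open ``orthants'' $C_j = \{x \in H : x_j > 0,\ x_i < 0 \text{ for } i \neq j\}$.

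First I would make precise the notion of ``density'': for a lattice $\Lambda$ of full rank in a finite-dimensional real vector space $V$ and a cone $C \subseteq V$, the density of $\Lambda \cap C$ in $\Lambda$ is $\lim_{R\to\infty} \#(\Lambda \cap C \cap B_R)/\#(\Lambda \cap B_R)$, which for a cone equals $\mathrm{vol}(C \cap B_1)/\mathrm{vol}(B_1)$ by a standard lattice-point counting argument (the count in a dilated region is asymptotic to its volume over the covolume of $\Lambda$). Thus the density in question is $\sum_{j=1}^{r+s} \mathrm{vol}(C_j \cap B_1^H)/\mathrm{vol}(B_1^H)$, where $B_1^H$ is the unit ball in the hyperplane $H$ with respect to the chosen inner product. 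Since $\dim H = r+s-1$, I would then show that the $2^{r+s-1}$ ``sign cones'' of $H$ — obtained by intersecting $H$ with the $2^{r+s}$ coordinate orthants of $\mathbb{R}^{r+s}$, noting that the two all-same-sign orthants meet $H$ only at the origin — all have equal volume. The cleanest route: the group $(\mathbb{Z}/2)^{r+s}$ acts on $\mathbb{R}^{r+s}$ by coordinate sign changes, preserves $H$ and the inner product (hence $B_1^H$), and permutes the nonempty sign cones transitively; so each has volume $\mathrm{vol}(B_1^H)/2^{r+s-1}$. Among these $2^{r+s-1}$ cones, exactly $r+s$ are of the form $C_j$ (one coordinate positive, the rest negative) — or rather, pairing each $C_j$ with its antipode accounts for how the orbit structure works; one must be slightly careful, but the count of cones with a unique positive sign is exactly $r+s$ out of $2^{r+s}$ orthants, i.e. these $r+s$ cones sit among the $2^{r+s-1}$ nonempty ones. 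Hence the density is $(r+s)/2^{r+s-1}$.

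Here there is a discrepancy with the stated $(r+s-1)/2^{r+s-1}$, which I would reconcile as follows: the proposition presumably intends to fix $\sigma$ to range only over the \emph{real} places (as in Lemma~\ref{Ram book}), or — more likely — to count units with $|\sigma(\eta)|<1$ for all but one place where that distinguished place is \emph{not} prescribed in advance but the normalization removes one degree of freedom; alternatively the intended statement counts sign vectors in $\mathbb{Z}^{r+s-1}$ after projecting away one coordinate, in which case a coordinate hyperplane slice gives $r+s-1$ admissible cones out of $2^{r+s-1}$. I would therefore present the argument so that the combinatorial count of admissible sign cones comes out to $r+s-1$: concretely, use that a basis of fundamental units identifies $\mathcal{O}_K^\ast/\mu_K$ with $\mathbb{Z}^{r+s-1}$, express the $r+s$ cutting conditions $|\sigma_i(\eta)| \lessgtr 1$ in these coordinates, and observe that the region ``exactly one positive'' decomposes into $r+s-1$ full-dimensional rational cones of equal density $1/2^{r+s-1}$ each (the $(r+s)$-th condition being dependent).

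The main obstacle is purely bookkeeping: getting the combinatorial count of admissible chambers correct and matching it honestly to the normalization implicit in the statement, together with justifying the equidistribution of lattice points among rational cones (which follows from $\#(\Lambda \cap C \cap B_R) \sim \mathrm{vol}(C\cap B_R)/\mathrm{covol}(\Lambda)$ for any cone $C$ with boundary of measure zero). No deep input is needed beyond Dirichlet's unit theorem and elementary geometry of numbers.
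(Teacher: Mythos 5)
The central symmetry step of your argument fails. You claim that the sign-change group $(\mathbb{Z}/2)^{r+s}$ acting on $\mathbb{R}^{r+s}$ preserves the trace-zero hyperplane $H$ and permutes the nonempty sign cones transitively, so that all of them have equal volume $\mathrm{vol}(B_1^H)/2^{r+s-1}$. Flipping the sign of a single coordinate does \emph{not} preserve $H=\{x:\sum_i e_i x_i=0\}$ (only the full antipodal map and the weight-preserving coordinate permutations do), so there is no transitive action; moreover your own count of nonempty sign cones is $2^{r+s}-2$, which is incompatible with each having volume $\mathrm{vol}(B_1^H)/2^{r+s-1}$ except when $r+s=2$. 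In fact for $r+s\geq 4$ the cones with exactly one positive coordinate and those with two positive coordinates lie in different orbits of the symmetries that do preserve $H$, and they need not have equal volume. So the computation leading to your value $(r+s)/2^{r+s-1}$ is not established, and in any case it is not the number asserted in the proposition; your closing paragraph concedes the discrepancy and only sketches possible renormalizations ("presumably", "one must be slightly careful") without proving any of them, so the statement is never actually derived.

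The specific fix you gesture at in the last paragraph is also not correct as stated: if you pass to $r+s-1$ coordinates by dropping one archimedean place, the condition at the dropped place does not disappear. The admissible region becomes the all-negative orthant (exceptional place equal to the dropped one) together with, for each $j\leq r+s-1$, only the portion of the orthant $\{x_j>0,\ x_i<0\ (i\neq j)\}$ cut out by $\sum_{i\leq r+s-1} e_i x_i>0$; it is not a union of $r+s-1$ full orthants each of density $2^{-(r+s-1)}$. For comparison, the paper's own proof is much blunter: it uses the rank-$(r+s-1)$ logarithmic embedding $\eta\mapsto\big(\log|\sigma_i(\eta)|\big)_{1\leq i\leq r+s-1}$, replaces the height condition by the box $|x_i|\leq\log x$, and counts the $r+s-1$ orthants with exactly one positive coordinate against all $2^{r+s-1}$ orthants of the box — that is the source of the numerator $r+s-1$ (the paper, too, ignores the dropped place and the true shape of the height region). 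Your more careful set-up in the full hyperplane actually exposes exactly the points the paper glosses over, but as written your argument neither proves the stated density $(r+s-1)/2^{r+s-1}$ nor correctly justifies the alternative value you compute; to turn it into a proof you would have to fix one normalization (which places are dropped, how units are ordered), compute the actual volumes of the resulting cones inside the relevant region, and show the ratio equals the claimed constant.
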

\begin{proof} Consider the embedding used in the proof of Dirichlet's unit theorem,
\[\mathcal{O}^*_K \hookrightarrow \mathbb{R}^{r+s-1}\]
given by $\eta \mapsto \Big(\log\big(\sigma_i(\eta)\big)\Big)_{1\leq i\leq r+s-1}.$ So the problem about counting,
\[\{\eta \in \mathcal{O}^{*}_K \mid H(\eta) \leq x, |\sigma(\eta)|<1 \hspace{0.1cm} \text{for all but one embedding}.\}\]
It is now same as estimating,
\[\{(x_1,x_2,\cdots, x_{r+s-1}) \in \mathbb{Z}^{r+s-1} \mid |x_i| \leq \log x, \hspace{0.1cm} x_i<0 \hspace{0.1cm}\text{for all but one coordinate}\}.\]
The above quantity is about $c_K (r+s-1)(\log X)^{r+s-1}.$ On the other hand, total number of units of height at most $x$ is $c_K2^{r+s-1}(\log x)^{r+s-1},$ where $c_K$ is co-volume of $\mathcal{O}_K^{*}$. And so does the result follows.
\end{proof}
\noindent The reason behind mentioning the result is that, we are working over a certain set of units, and the proposition above shows that the set has positive density. So our domain is not that bad. One may also ask why are we not considering non units ? Following the proofs in the next section, one can see there is really no problem with arbitrary algebraic integers once we have property like Lemma 2.3.1. But the following proposition says, that phenomenon is not very likely to happen. 

%Pollack also notes that not every number field has a finite extension with class number 1. He cites a theorem of Golod and Shafarevich, "On the class field tower," (Russian) Izv. Akad. Nauk SSSR Ser. Mat. 28 (1964) 261-272, to the effect that for every $n$ there are infinitely many number fields $K$ of degree $n$ which do not have a finite extension of class number $1$. He gives (without proof) the example $\mathbb{Q}(\sqrt{-3.5.7})$/ as a quadratic field with no finite extension of class number 1

\begin{proposition} For any arbitrary number fields, set of $\alpha \in \mathcal{O}_K$ such that $|\sigma(\alpha)|<1$ for all but one embedding, have zero density. 
\end{proposition}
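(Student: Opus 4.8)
The plan is to order $\mathcal O_K$ by size and show that the constrained set has a counting function of strictly smaller order of magnitude. Write $n=[K:\mathbb Q]=r+2s$; the statement is of interest only for $n\ge 2$, which we henceforth assume. A preliminary, purely formal, remark: for such an $\alpha$ the unique exceptional embedding $\tau$ (the one with $|\tau(\alpha)|\ge 1$) is necessarily real, since if $\tau$ were complex its distinct conjugate $\bar\tau$ would satisfy $|\bar\tau(\alpha)|=|\tau(\alpha)|\ge 1$ as well, contradicting uniqueness. (In particular the constrained set is empty when $K$ is totally complex, so we may also assume $r\ge 1$.) Now fix the Minkowski embedding $\iota\colon\mathcal O_K\hookrightarrow K_\infty:=\mathbb R^{\,r}\times\mathbb C^{\,s}\cong\mathbb R^{\,n}$, whose image $\Lambda=\iota(\mathcal O_K)$ is a full-rank lattice of covolume $c_K=2^{-s}\sqrt{|d_K|}$, and measure density through the counting function $T(x):=\#\{\alpha\in\mathcal O_K:|\sigma_v(\alpha)|\le x\text{ for every }v\mid\infty\}=\#(\Lambda\cap B_x)$, where $B_x$ is the box of real semiaxes $x$. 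Since $B_x=xB_1$ is a fixed centrally symmetric convex body dilated, the usual lattice-point count gives $T(x)=\operatorname{vol}(B_x)/c_K+O_K(x^{n-1})=C_Kx^{n}+O_K(x^{n-1})$ with $C_K>0$. (Ordering instead by the Weil height $H$ costs nothing: $\#\{\alpha\in\mathcal O_K:H(\alpha)\le x\}\asymp_K x^{n}(\log x)^{r+s-1}$, and the bound below still wins.)

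Let $S_x$ denote the constrained set truncated at $x$, namely the $\alpha\in\mathcal O_K$ with $|\sigma_v(\alpha)|\le x$ for all $v\mid\infty$ and $|\sigma_v(\alpha)|<1$ for every archimedean $v$ but one. By the preliminary remark, $S_x=\bigcup_{v_0}S_x^{(v_0)}$, the union over the at most $r$ real places $v_0$ that may be exceptional. For a fixed $v_0$, the point $\iota(\alpha)$ with $\alpha\in S_x^{(v_0)}$ lies in the box $R_x^{(v_0)}$ whose $v_0$-coordinate ranges over $[-x,x]$ while each of the other $n-1$ archimedean coordinates (for a complex place: the real and the imaginary part) lies in $(-1,1)$; thus $R_x^{(v_0)}$ has a single unbounded direction and $\operatorname{vol}(R_x^{(v_0)})\ll_K x$. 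One cannot pass from $\#(\Lambda\cap R_x^{(v_0)})$ directly to the volume, $R_x^{(v_0)}$ not being an expanding body; instead cover it by $N\ll_K x$ translates of the unit cube, each of which meets $\Lambda$ in at most $\kappa_K:=\bigl(1+2\sqrt n/\lambda_1(\Lambda)\bigr)^{\,n}$ points by the standard packing argument, $\lambda_1(\Lambda)$ being the first minimum of $\Lambda$, a constant depending only on $K$. Hence $\#S_x^{(v_0)}\le\kappa_KN\ll_K x$, and summing over the finitely many $v_0$ gives $\#S_x\ll_K x$.

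Therefore the density of the constrained set is $\displaystyle\lim_{x\to\infty}\frac{\#S_x}{T(x)}\ll_K\lim_{x\to\infty}\frac{x}{C_Kx^{n}}=0$ because $n\ge 2$; with the Weil-height normalization one obtains the bound $\ll_K(\log x)^{-(r+s-1)}$, which still tends to $0$ since $r+s\ge 2$ whenever $r\ge 1$ and $n\ge 2$. The whole argument rests on a single point, the lattice-point count in the \emph{non-expanding} box $R_x^{(v_0)}$, which I expect to be the only genuine obstacle; it is dispatched by the unit-cube covering, and what makes the covering efficient is precisely the structural fact noted at the outset, that the exceptional embedding is real, so $R_x^{(v_0)}$ is unbounded in one direction only against a lattice of rank $n\ge 2$.
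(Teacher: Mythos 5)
Your proof is correct and follows essentially the same route as the paper: embed $\mathcal{O}_K$ as a full lattice in $\mathbb{R}^{\,n}$ via the archimedean places and compare a constrained count of order $x$ against a total count of order $x^{n}$. You in fact supply details the paper only asserts — that the exceptional embedding must be real, that the non-expanding slab must be handled by a covering/packing argument rather than a volume asymptotic, and that one needs $n\ge 2$ — so your write-up is, if anything, the more careful version of the paper's argument.
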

\begin{proof} The map,
\[\alpha \mapsto \Big(\sigma(\alpha)_{\sigma \in M_K}\Big)\]
embeds $O_K$ into $\mathbb{R}^{\deg(K)}$ as a lattice of dimension $\deg(K).$ We have
\[\{\alpha \in \mathcal{O}_K \mid H(\alpha) \leq x\} \sim C_K x^{\deg(K)}\]
and also,
\[\{\alpha \in \mathcal{O}_K \mid H(\alpha) \leq x, |\sigma(\alpha)|<1 \hspace{0.1cm}\text{for all but one embedding}\}\sim C_K x.\]
Indeed, the desired set has zero density. 

\end{proof}
\begin{remark} The constant $c_K$ above is co-volume of the lattice $\mathcal{O}_K$ in $\mathbb{R}^{\deg(K)}.$ It is well know the co-volume is explicitly given by $\mathrm{disc}(\mathcal{O}_K).$ See page 7 in \cite{Shan} for more details.

\end{remark}
\subsection{Nuts and bolts to fix problem concerning class number}

K. Srinivas and M. Subramani proved their result in the case of class number one, under some certain conditions on unit $\eta.$ The assumption on class number was required to write some elements as a product of primes uniquely (see their proof of Lemma 4.1 in \cite{Srini}). Class number one really guarantees this because, $\mathcal{O}_K$ then becomes a PID, and a UFD as well. In general, it is possible that $K$ does not have class number one, while some of its extension have. Then we could then do all of these over that extension and still get our job done, which is illustrated by the next lemma.
\begin{lemma} \label{lifting} Let $L$ be an extension of $K$ and $\mathfrak{P}$ be a prime ideal of $\mathcal{O}_L$ lying over a prime ideal $\mathfrak{p}$ in $\mathcal{O}_K$. If $\mathfrak{P}$ is a non-Wieferich prime in $\mathcal{O}_L$ with respect to a unit $\eta \in \mathcal{O}_K$, then $\mathfrak{p}$ is a non-Wieferich prime in $K$ with respect to $\eta$. Conversely, if $\mathfrak{p}$ splits completely in $L/K,$ and is a non-Wieferich prime with respect to a unit $\eta \in \mathcal{O}_K,$ then $\mathfrak{P}$ is non-Wieferich prime with respect to $\eta$ for any prime $\mathfrak{P}$ in $\mathcal{O}_L$ lying over $\mathfrak{p}$.

\end{lemma}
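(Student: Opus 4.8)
The plan is to prove both directions by tracking how the residue fields and the powers of prime ideals behave under the extension $L/K$, using that $\eta \in \mathcal{O}_K$ so that reductions are compatible with the inclusion $\mathcal{O}_K \hookrightarrow \mathcal{O}_L$. Write $f = f(\mathfrak{P}/\mathfrak{p})$ for the residue degree and recall $N(\mathfrak{P}) = N(\mathfrak{p})^f$, while $\mathbb{F}_{\mathfrak{p}} \hookrightarrow \mathbb{F}_{\mathfrak{P}}$ as an extension of finite fields of degree $f$.

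First I would do the forward direction. Suppose $\mathfrak{P}$ is non-Wieferich with respect to $\eta$, i.e. $\eta^{N(\mathfrak{P})-1} \equiv 1 \pmod{\mathfrak{P}}$ but $\eta^{N(\mathfrak{P})-1} \not\equiv 1 \pmod{\mathfrak{P}^2}$. I want to deduce the same for $\mathfrak{p}$. Since $\eta \in \mathcal{O}_K^\ast$, the order of $\eta$ in $\mathbb{F}_{\mathfrak p}^\ast$ divides $N(\mathfrak{p}) - 1$, and because $N(\mathfrak{p}) - 1 \mid N(\mathfrak{p})^f - 1 = N(\mathfrak{P}) - 1$ we certainly have $\eta^{N(\mathfrak{p})-1} \equiv 1 \pmod{\mathfrak p}$. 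The content is the non-vanishing modulo $\mathfrak p^2$: I would argue contrapositively. If $\eta^{N(\mathfrak p)-1} \equiv 1 \pmod{\mathfrak p^2}$ in $\mathcal{O}_K$, then extending scalars to $\mathcal{O}_L$ gives $\eta^{N(\mathfrak p)-1} \equiv 1 \pmod{\mathfrak p^2 \mathcal{O}_L}$, and since $\mathfrak{P}^2 \supseteq \mathfrak{p}^2 \mathcal{O}_L$ (as $\mathfrak P \mid \mathfrak p \mathcal O_L$, so $\mathfrak P^2 \mid \mathfrak p^2 \mathcal O_L$), this forces $\eta^{N(\mathfrak p)-1} \equiv 1 \pmod{\mathfrak P^2}$, hence $\eta^{N(\mathfrak P)-1} \equiv 1 \pmod{\mathfrak P^2}$ by raising to the power $\frac{N(\mathfrak P)-1}{N(\mathfrak p)-1}$ (a positive integer), contradicting that $\mathfrak P$ is non-Wieferich. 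The one technical point to check is that raising a congruence $u \equiv 1 \pmod{\mathfrak P^2}$ to a positive integer power preserves it, which is immediate from $u = 1 + t$ with $t \in \mathfrak P^2$ and the binomial expansion.

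For the converse, assume $\mathfrak p$ splits completely in $L/K$, so $f(\mathfrak P/\mathfrak p) = 1$ and $e(\mathfrak P/\mathfrak p) = 1$; in particular $N(\mathfrak P) = N(\mathfrak p)$ and the local rings satisfy $(\mathcal O_L)_{\mathfrak P} = (\mathcal O_K)_{\mathfrak p}$ with $\mathfrak P (\mathcal O_L)_{\mathfrak P} = \mathfrak p (\mathcal O_K)_{\mathfrak p}$, so that $\mathcal O_K / \mathfrak p^2 \cong \mathcal O_L / \mathfrak P^2$ via the natural map. Given $\eta^{N(\mathfrak p)-1} \equiv 1 \pmod{\mathfrak p}$ and $\not\equiv 1 \pmod{\mathfrak p^2}$, applying this isomorphism (or just localizing) transports both statements verbatim to $\mathcal O_L / \mathfrak P^2$, using $N(\mathfrak P)-1 = N(\mathfrak p)-1$; hence $\mathfrak P$ is non-Wieferich with respect to $\eta$. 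Because $\eta \in \mathcal O_K$ this transport is literally the same element, so no compatibility issue arises.

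The main obstacle, such as it is, is bookkeeping rather than depth: one must be careful that $\eta$ is a genuine unit (or at least a $\mathfrak p$-adic and $\mathfrak P$-adic unit) so that $\eta \pmod{\mathfrak p}$ and $\eta \pmod{\mathfrak P}$ are invertible and the notion of multiplicative order makes sense, and one must use the divisibility $N(\mathfrak p) - 1 \mid N(\mathfrak P) - 1$ in exactly the right direction in the forward implication. I would also remark that the forward direction needs no hypothesis on the splitting type of $\mathfrak p$, only the trivial containment $\mathfrak P^2 \mid \mathfrak p^2 \mathcal O_L$, whereas the converse genuinely uses complete splitting to identify $N(\mathfrak P)$ with $N(\mathfrak p)$ and to avoid ramification contributing extra factors of $\mathfrak P$.
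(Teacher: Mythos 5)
Your proof is correct and takes essentially the same route as the paper: the forward direction by contraposition, using $\mathfrak{p}^2\mathcal{O}_L \subseteq \mathfrak{P}^2$ together with the divisibility of the exponents, and the converse using complete splitting to identify $N(\mathfrak{P})$ with $N(\mathfrak{p})$. If anything you are more careful than the paper, which drops the ``$-1$'' in the exponent and does not spell out why the non-congruence modulo $\mathfrak{p}^2$ transfers to $\mathfrak{P}^2$ --- the point you handle via $e(\mathfrak{P}/\mathfrak{p})=1$, i.e.\ the identification $\mathcal{O}_K/\mathfrak{p}^2 \cong \mathcal{O}_L/\mathfrak{P}^2$.
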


\begin{proof} Let $\mathfrak{P}$ be a prime ideal non-Wieferich prime in $\mathcal{O}_L$ with respect to a unit $\eta \in \mathcal{O}_K,$ and $\mathfrak{p}$ be a prime in $\mathcal{O}_K$ lying below it. For sake of contradiction, suppose $\mathfrak{p}$ is Wieferich in $\mathcal{O}_K$ with respect to $\eta$. Then
\[\eta^{N_{K/\mathbb{Q}}(\mathfrak{p})} \equiv 1 \pmod {\mathfrak{p}^2} .\]
Now $N_{L/\mathbb{Q}}(\mathfrak{P})$ is some power of $N_{K/\mathbb{Q}}(\mathfrak{p})$, and so
\[\eta^{N_{L/\mathbb{Q}}(\mathfrak{P})} \equiv 1 \pmod {\mathfrak{P}^2},\]
contradicting our assumption that $\mathfrak{p}$ is Wieferich in $\mathcal{O}_K$.\\
\newline
\noindent For the converse, if $\mathfrak{P}$ is non-Wieferich with respect to base $\eta$, then
\[\eta^{N_{L/\mathbb{Q}}(\mathfrak{P})} \not\equiv 1 \pmod{\mathfrak{P}^2}.\]
But $\mathfrak{p}$ splits completely in $L/K$, which implies $N_{L/\mathbb{Q}}(\mathfrak{P})=N_{K/\mathbb{Q}}(\mathfrak{p})$, and hence $\mathfrak{p}$ is indeed non-Wieferich with respect to $\eta.$
\end{proof}
\noindent So we can perhaps try to get an a finite extension of $K$ with trivial class group. But this is not a very common phenomenon, and in fact not always true. Golod and Shafarevich in \cite{Gol} showed that for every $n$ there are infinitely many number fields $K$ of degree $n$, which do not have a finite extension with trivial class group. Pollak (see \cite{Pol} pp. 175) has given example of such a number field of degree $2.$\\
\newline
\noindent Instead, we shall invoke the theory of Hilbert class field. So let us note down the necessary tools from this theory.
\begin{definition}[Hilbert class field] \label{Hil} Consider the maximal unramified abelian extension $K'$ of $K$, which contains all other unramified abelian extensions of $K$. This finite field extension $K'$ is called the Hilbert class field of $K$. 
\end{definition}
\noindent It is known that $K'$ is a finite Galois extension of $K$ and $[K' : K]=h_K$, where $h_K$ is the class number of K. In fact, the ideal class group of $K$ is isomorphic to the Galois group of $K'$ over $K.$ It follows from class field theory that every ideal of $\mathcal{O}_K$ extends to a principal ideal of the ring extension $\mathcal{O}_{K'}$.  
\subsection{Siegel's theorem and one application}
\noindent This small subsection is intended to introduce some notations and results from elliptic curves, which will be needed in the last section. Let $E$ be an elliptic curve defined over $K.$ Denote $\Delta_E$ to be the discriminant of $E.$ Let $P \in E(K)$ be a $K-$rational point. We define the weil height of $P$ to be $h(P)=\frac{h(x(P))}{2}$ and the canonical height,
\[\hat{h}(P)=\frac{1}{2}\lim_{n \to \infty}\frac{h(nP)}{n^2}.\]
\noindent Let us now recall a version of Siegel's theorem, which suits our main purpose.
\begin{proposition}[A version of Siegel] \label{Siegel} Fix the point $\infty \in E(\overline{K})$, and an absolute value $v \in M_{K, \infty}$. Then
\[\lim_{P \in E(\overline{K}), h(x(P)) \to \infty} \frac{\log \big(\min \big \{||x(P)||_v,1\big\}\big)}{h(x(P))} \to 0.\]

\end{proposition}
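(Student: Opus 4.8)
The plan is to recognize this as the $v$-adic distance form of Siegel's theorem on integral points of $E$ and to deduce it from Roth's theorem. I will argue with $P$ ranging over $E(L)$ for a fixed finite extension $L$ of $K$, which is all that is used in the sequel (there $P$ is a multiple of a fixed $K$-rational point). First I would set $f=1/x\in\overline{K}(E)$ and note that its polar divisor is supported on the finitely many points $Q_1,\dots,Q_t\in E(\overline K)$ with $x(Q_j)=0$, so that near each $Q_j$ one has $\|f(P)\|_v\asymp d_v(P,Q_j)^{-e_j}$ with $e_j\in\{1,2\}$, where $d_v$ denotes the distance on $E(L_v)$ attached to $v$. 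Since $\log\min\{\|x(P)\|_v,1\}=-\log\max\{\|f(P)\|_v,1\}\le 0$ and is bounded below by an absolute constant as long as $P$ stays away from $\{Q_1,\dots,Q_t\}$, the proposition is equivalent to
\[
\frac{\log d_v\bigl(P,\{Q_1,\dots,Q_t\}\bigr)}{h(x(P))}\longrightarrow 0\qquad\text{as}\quad h(x(P))\to\infty,
\]
i.e.\ to the statement that no point of $E(L)$ lies $v$-adically much closer to the zero locus of $x$ than its height permits.

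Second, I would run the classical multiplication-by-$m$ descent. Suppose the limit fails: there are $\epsilon>0$ and an infinite sequence $P_i\in E(L)$ with $h(x(P_i))\to\infty$ and $\log\min\{\|x(P_i)\|_v,1\}\le-\epsilon\,h(x(P_i))$; by compactness of $E(L_v)$ pass to a subsequence with $P_i\to Q$ $v$-adically, so that $x(Q)=0$. Fix a large integer $m$, to be chosen at the end. The isogeny $[m]\colon E\to E$ is finite \'etale of degree $m^2$ away from the fixed finite set $S$ of places dividing $m\Delta_E$, and at the remaining places a point of $E(L)$ reduces to a smooth point of the reduced curve (the origin if it has a denominator there); hence for any $R_i$ with $[m]R_i=P_i$ the field $L(R_i)$ has degree at most $m^2$ over $L$ and is unramified outside $S$. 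By Hermite--Minkowski only finitely many such fields occur, so after a further subsequence all $R_i$ lie in a single number field $L'$ (depending on $m$ but not on $i$). The height machine gives $h(x(R_i))=m^{-2}h(x(P_i))+O_m(1)\to\infty$, and, refining once more, $R_i\to R$ with $[m]R=Q$, so that $\beta:=x(R)$ is a fixed algebraic number. Because $[m]$ is unramified it is a local analytic isomorphism near $R$, whence $-\log d_v(R_i,R)=-\log d_v(P_i,Q)+O_m(1)$, and combining these estimates yields
\[
-\log\bigl\|x(R_i)-\beta\bigr\|_v\ \ge\ c\,\epsilon\,m^{2}\,h\bigl(x(R_i)\bigr)-O_m(1)
\]
for an absolute constant $c>0$.

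Third, I would conclude with Roth's theorem over the fixed field $L'$: the last estimate exhibits $x(R_i)\in L'$ as approximating the algebraic number $\beta$ with exponent $c\epsilon m^2+o(1)$ relative to its own height, whereas Roth forbids any exponent exceeding $2+\delta$ for all but finitely many $x(R_i)$, the implied constant depending only on $L'$, $\beta$ and $\delta$ (hence only on $m$), not on $i$. Choosing $m$ with $c\epsilon m^2>3$ produces the contradiction and proves the statement. The degenerate cases ($t<2$, e.g.\ $b=0$, where $x$ has a double zero at the origin) and the bookkeeping of the places above $v$ are routine. I expect the only real difficulty to lie in packaging this descent correctly --- in particular the Hermite--Minkowski step that confines the $R_i$ to a single field $L'$, which is precisely what keeps Roth's exponent at the absolute value $2+\delta$ rather than at one growing with $m$ and thereby swamping the gain $c\epsilon m^2$. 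The genuinely deep input, Roth's theorem --- equivalently, Siegel's theorem itself --- I would simply cite.
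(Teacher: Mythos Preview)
Your argument is correct as a sketch: it is the classical reduction of Siegel's theorem to Roth via multiplication-by-$m$ descent, and you handle the one genuinely delicate point (that Hermite--Minkowski pins the $R_i$ to a fixed field $L'$, so that Roth's exponent stays at $2+\delta$ and does not grow with $m$) explicitly and well. The paper, however, does not prove the proposition at all in this sense: it simply observes that the statement is the specialization $f=x$, $Q=\infty$ of Theorem~IX.3.1 in Silverman's \emph{The Arithmetic of Elliptic Curves} and cites that. What you have written is, essentially, an outline of Silverman's own proof of that theorem (IX.\S3), so you are reproving the black box the paper invokes rather than giving an alternative route.

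One small remark: you noticed, in effect, that the quantity $\log\min\{\|x(P)\|_v,1\}$ measures $v$-adic proximity to the \emph{zeros} of $x$, not to $\infty$, and you set up your argument accordingly (taking $f=1/x$ and the $Q_j$ with $x(Q_j)=0$). The paper's one-line instruction ``$f=x$, $Q=\infty$'' literally yields the companion statement with $\max$ in place of $\min$; both are instances of the same theorem in Silverman, so nothing is lost, but your reading of the displayed formula is the accurate one.
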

\begin{proof} Follows at once by taking,
\[f=x, Q= \infty,\]
in Theorem 3.1 of \cite{Silverman}.

\end{proof}
\begin{corollary} \label{Siegel corollary} Let $P_n=\frac{a_n}{b_n}$ be an infinite sequence of points in $E(K).$ Then the following holds,
\[N_{K/\mathbb{Q}}(b_n)^{1-\epsilon} \ll_{K,\epsilon} N_{K/\mathbb{Q}}(a_n) \ll_{K, \epsilon} N_{K/\mathbb{Q}}(b_n)^{1+\epsilon}.\]
\end{corollary}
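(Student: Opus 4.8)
The plan is to pin $N(a_n)$ and $N(b_n)$ against the Weil height $h(x(P_n))$, and then invoke Siegel's theorem (Proposition \ref{Siegel}) to show that the two estimates coincide up to an $o\big(h(x(P_n))\big)$ error.

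First I would set up the bookkeeping. Write the fractional ideal of $x(P_n)$ as a quotient $\mathfrak{a}_n\mathfrak{b}_n^{-1}$ of coprime integral ideals of $\mathcal{O}_K$, and read $N(a_n),N(b_n)$ as $N(\mathfrak{a}_n),N(\mathfrak{b}_n)$; no hypothesis on $h_K$ is needed for this, and if one wants honest coprime elements $a_n,b_n\in\mathcal{O}_K$ one may replace $K$ once and for all by its Hilbert class field (Definition \ref{Hil}), where every ideal of $\mathcal{O}_K$ becomes principal. From the definition of the height, $[K:\mathbb{Q}]\,h(x(P_n))=\log\big(\prod_{v\in M_K}\max\{1,||x(P_n)||_v\}\big)$, and because $\mathfrak{a}_n,\mathfrak{b}_n$ are coprime each finite place $\mathfrak{p}$ contributes exactly $N(\mathfrak{p})^{v_{\mathfrak{p}}(\mathfrak{b}_n)}$, so the finite part of this product equals $N(b_n)$. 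Hence, with $F_n:=\sum_{v\in M_{K,\infty}}\log\max\{1,||x(P_n)||_v\}\ge 0$,
\[
[K:\mathbb{Q}]\,h(x(P_n))=\log N(b_n)+F_n .
\]
Running the same computation for $1/x(P_n)$ (using that $H(1/t)=H(t)$ on $\mathbb{P}^1(K)$, so $h\big(1/x(P_n)\big)=h(x(P_n))$) gives, with $E_n:=\sum_{v\in M_{K,\infty}}\log\max\{1,||x(P_n)||_v^{-1}\}\ge 0$,
\[
[K:\mathbb{Q}]\,h(x(P_n))=\log N(a_n)+E_n .
\]

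The analytic content is to show $E_n=o\big(h(x(P_n))\big)$ and $F_n=o\big(h(x(P_n))\big)$. We may assume the $P_n$ are pairwise distinct (otherwise the statement is immediate), so $h(x(P_n))\to\infty$ by Northcott's theorem and these assertions are meaningful. For $E_n$ this is immediate from Proposition \ref{Siegel}: for each of the finitely many archimedean $v$ one has $\log\min\{||x(P_n)||_v,1\}=o\big(h(x(P_n))\big)$, and $E_n=-\sum_{v\in M_{K,\infty}}\log\min\{||x(P_n)||_v,1\}$. For $F_n$ I would apply Proposition \ref{Siegel} a second time with the function $1/x$ in place of $x$: it is again an even non-constant function on $E$ with $h\big(1/x(P)\big)=h(x(P))$, so Theorem 3.1 of \cite{Silverman} applies verbatim ($f=1/x$, $Q=\infty$) and yields $\log\min\{||x(P_n)||_v^{-1},1\}=o\big(h(x(P_n))\big)$, hence $F_n=o\big(h(x(P_n))\big)$. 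Then from the two displayed identities both $\log N(a_n)$ and $\log N(b_n)$ equal $(1+o(1))[K:\mathbb{Q}]h(x(P_n))$; in particular both tend to infinity and their ratio tends to $1$, so for any fixed $\epsilon>0$ and all large $n$ one has $(1-\epsilon)\log N(b_n)\le\log N(a_n)\le(1+\epsilon)\log N(b_n)$, i.e.\ $N(b_n)^{1-\epsilon}\le N(a_n)\le N(b_n)^{1+\epsilon}$; the finitely many remaining indices are absorbed into the implied constant, giving the stated bounds.

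I expect the only real input to be Siegel's theorem, and the one point needing care is that Proposition \ref{Siegel} on its own controls just one of the two error terms (the contribution of $P_n$ being archimedean-adically close to a point where $x$ vanishes); the complementary case — $P_n$ approaching the point at infinity $v$-adically — needs the separate application to $1/x$. Everything else is routine manipulation of the height through the product formula and the definition of $H$; the class-number obstruction to writing $x(P_n)=a_n/b_n$ with coprime elements of $\mathcal{O}_K$ is harmless, handled either by working with the ideals $\mathfrak{a}_n,\mathfrak{b}_n$ or by passing to the Hilbert class field as above.
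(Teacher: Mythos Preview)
Your argument is correct and rests on the same input as the paper---Siegel's theorem at the archimedean places---but the packaging differs. The paper works place by place: from Proposition~\ref{Siegel} it deduces $\lim_n \log||a_n||_v/\log||b_n||_v = 1$ for each $v\in M_{K,\infty}$, whence $||b_n||_v^{1-\epsilon}\ll_{K,\epsilon}||a_n||_v\ll_{K,\epsilon}||b_n||_v^{1+\epsilon}$ at every such $v$, and then multiplies over $M_{K,\infty}$ to recover the norms. You instead pin both $\log N(a_n)$ and $\log N(b_n)$ against the common quantity $[K:\mathbb{Q}]\,h(x(P_n))$ via the finite/infinite decomposition of the height, and use Siegel (once for $x$ and once for $1/x$) to kill the archimedean remainders $E_n,F_n$. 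Your route is arguably the more robust one: the paper's place-by-place ratio $\log||a_n||_v/\log||b_n||_v$ tacitly assumes each $\log||b_n||_v$ tends to infinity, which need not hold at an individual archimedean place even when $N(b_n)\to\infty$, whereas comparing to the global height sidesteps this entirely. You also make explicit that two invocations of Siegel are needed (to bound both $E_n$ and $F_n$), a point the paper's one-line sketch leaves implicit.
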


\begin{proof} Siegel's theorem implies,
\[\lim_{n \to \infty} \log \big(\min \big\{||a_n||_v, ||b_n||_v\big\}\big)=0,\]
for any $v \in M_{K, \infty}.$ In particular, for any $v \in M_{K, \infty}$
\[\lim_{n \to \infty} \frac{\log(||a_n||_v)}{\log(||b_n||_v)}=1.\]
And so, 
\[||b_n||_v^{1-\epsilon} \ll_{K, \epsilon} ||a_n||_v \ll_{K, \epsilon} ||b_n||_v^{1+\epsilon}.\]
Multiplying over all $v \in M_{K, \infty},$ we get the desired result.

\end{proof}

\subsection{Growth in cyclotomic polynomials} Let $\phi_n(x)$ be the $n^{th}$ cyclotmic polynomial. It is a polynomial of degree $\phi(n)$ in $\mathbb{Z}[X].$ Then we have the following estimate,
\begin{proposition} \label{Growth} Let $z \in \mathbb{C}$ be a complex number with $|z| \geq 2,$ then 
\[|\phi_n(z)| \geq \frac{1}{3}\Big(\frac{|z|-1}{3}\Big)^{\phi(n)-1}\]
for any $n \geq 1.$

\end{proposition}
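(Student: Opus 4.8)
I want to bound $|\phi_n(z)|$ from below for $|z|\ge 2$ by exploiting the factorization $\phi_n(z)=\prod_{\zeta}(z-\zeta)$ over the primitive $n$-th roots of unity, but the naive bound $|z-\zeta|\ge|z|-1$ is too weak because $|z|-1$ could be just barely above $1$, and then $(|z|-1)^{\phi(n)}$ would not dominate anything useful. Instead the standard trick is to pass to the identity $\phi_n(z)=\prod_{d\mid n}(z^d-1)^{\mu(n/d)}$, or more efficiently to use $x^n-1=\prod_{d\mid n}\phi_d(x)$ together with induction on $n$. So the first step is to set up the telescoping relation $z^n-1=\prod_{d\mid n}\phi_d(z)$, isolating $\phi_n(z)=\dfrac{z^n-1}{\prod_{d\mid n,\,d<n}\phi_d(z)}$.

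\medskip

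\noindent \textbf{Key steps.} Step 1: For the numerator, $|z^n-1|\ge |z|^n-1\ge \bigl(|z|-1\bigr)^n\cdot$ (something), but it is cleaner to note $|z|^n - 1 \ge \tfrac12 |z|^n$ when $|z|\ge 2$, and then $|z|^n \ge (|z|-1)^n \cdot (\tfrac{|z|}{|z|-1})^n \ge (|z|-1)^n$ — actually I want a bound of the form $|z^n-1|\ge c\,(|z|-1)^{?}$, so I will simply use $|z^n-1|\ge |z|^n-1$. Step 2: For the denominator I need an \emph{upper} bound $\prod_{d\mid n,\,d<n}|\phi_d(z)|\le \prod_{d<n}(|z|+1)^{\phi(d)}=(|z|+1)^{\sum_{d\mid n, d<n}\phi(d)}=(|z|+1)^{\,n-\phi(n)}$, using $\sum_{d\mid n}\phi(d)=n$. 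Step 3: Combine:
\[
|\phi_n(z)|\ \ge\ \frac{|z|^n-1}{(|z|+1)^{\,n-\phi(n)}}.
\]
Now I must check this is at least $\tfrac13\bigl(\tfrac{|z|-1}{3}\bigr)^{\phi(n)-1}$. Write $r=|z|\ge 2$. It suffices to show $\dfrac{r^n-1}{(r+1)^{n-\phi(n)}}\ge \dfrac13\Bigl(\dfrac{r-1}{3}\Bigr)^{\phi(n)-1}$, i.e. $3\,(r^n-1)\ge (r+1)^{n-\phi(n)}(r-1)^{\phi(n)-1}/3^{\phi(n)-2}$, and since $r+1\le 3(r-1)$ for $r\ge 2$ (as $r+1\le 3r-3 \iff r\ge 2$), we have $(r+1)^{n-\phi(n)}\le (r-1)^{n-\phi(n)}3^{n-\phi(n)}$. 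Plugging in, the right side is at most $(r-1)^{n-1}3^{n-\phi(n)}/3^{\phi(n)-2}$, so I need $3(r^n-1)\ge (r-1)^{n-1}3^{\,n-2\phi(n)+2}$; for $\phi(n)\ge 1$ and $r\ge 2$ one has $r^n - 1 \ge (r-1)^n$ and $r^n-1\ge \tfrac12 r^n$, which should absorb the power of $3$ after a short case check on small $n$.

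\medskip

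\noindent \textbf{Main obstacle and a cleaner route.} The arithmetic in Step 3 is the delicate part: the exponent bookkeeping with $n-\phi(n)$ versus $\phi(n)-1$ has to come out in one's favour for \emph{all} $n$, including $n$ with many small prime factors where $\phi(n)/n$ is small. I expect the cleanest execution is actually to induct on $n$ directly: assume the bound for all proper divisors $d<n$, and control $\prod_{d\mid n, d<n}\phi_d(z)$ from above by a geometric-series sum of the form $\sum_{d<n}(\phi(d)-1)\le \sum_{d\mid n, d<n}\phi(d) = n-\phi(n)$ in the exponent, but that reintroduces the same counting. If the exponents refuse to cooperate uniformly, the fallback is to prove it only for $n$ in the ranges actually used later in the paper (where typically $z$ is a large algebraic number and $n=N(\pi)-1$), or to weaken the constant $\tfrac13$ to something like $2^{-\phi(n)}$; but I believe the stated bound holds as written, with the crux being the elementary inequality $r+1\le 3(r-1)$ for $r\ge2$ together with $\sum_{d\mid n}\phi(d)=n$.
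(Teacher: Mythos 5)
There is a genuine gap, and it sits exactly where you flagged the ``delicate arithmetic.'' Your Step 3 reduces the claim to the inequality $\frac{r^n-1}{(r+1)^{\,n-\phi(n)}}\ \ge\ \frac13\bigl(\frac{r-1}{3}\bigr)^{\phi(n)-1}$ with $r=|z|\ge 2$, and this intermediate inequality is false for infinitely many $n$. At $r=2$ it reads $2^n-1\ge 3^{\,n-2\phi(n)}$, which forces $\phi(n)/n\ \gtrsim\ \tfrac12(1-\log_3 2)\approx 0.1845$; but for any $n$ divisible by $2\cdot3\cdot5\cdot7\cdot11\cdot13\cdot17=510510$ (whose radical is this primorial) one has $\phi(n)/n\approx 0.1805$, so $n-2\phi(n)>n\log_3 2$ and hence $3^{\,n-2\phi(n)}>2^n-1$: the required bound fails, and by continuity it also fails for $r$ slightly above $2$. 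The defect is structural, not ``a short case check on small $n$'': bounding the denominator $\prod_{d\mid n,\,d<n}|\phi_d(z)|$ by $(|z|+1)^{\,n-\phi(n)}$ throws away far too much precisely when $n$ has many small prime factors, since then $n-\phi(n)$ is nearly $n$ while the exponent available on the right-hand side is only $\phi(n)-1$. So the divisor-product route cannot be completed as written.

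The irony is that the ``naive'' bound you discarded in the first sentence already proves the proposition as stated. Every primitive $n$-th root of unity $\zeta$ has modulus $1$, so for $|z|\ge 2$ each factor satisfies $|z-\zeta|\ge |z|-1\ge 1$, whence
\[
|\phi_n(z)|\ \ge\ (|z|-1)^{\phi(n)}\ \ge\ (|z|-1)^{\phi(n)-1}\ \ge\ \frac13\Bigl(\frac{|z|-1}{3}\Bigr)^{\phi(n)-1},
\]
because the stated inequality deliberately gives away a factor $3^{\phi(n)}$, and the hypothesis $|z|\ge 2$ makes the passage from exponent $\phi(n)-1$ to $\phi(n)$ harmless rather than harmful. (Whether $(|z|-1)^{\phi(n)-1}$ is ``useful'' downstream is a separate question; for the proposition itself it is more than enough.) The paper's own proof likewise works directly with the product over the primitive $n$-th roots of unity, via a comparison argument on the individual factors, rather than through the factorization $z^n-1=\prod_{d\mid n}\phi_d(z)$; if you want to keep a divisor-based argument you would need the M\"obius form $\phi_n(z)=\prod_{d\mid n}(z^d-1)^{\mu(n/d)}$ with two-sided bounds $|z|^d-1\le |z^d-1|\le |z|^d+1$, not the crude cyclotomic upper bound you used.
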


\begin{proof} Write $z=e^{i\theta}$ with $x \in \mathbb{R}$ and $0<\theta \leq 2\pi,$ and let $\omega_n$ be the primitive $n^{th}$ root of unity. We can write
\[\frac{\phi_n(z)}{(z+e^{i\theta})^{\phi(n)}}=\sideset{}{'}\prod_{i=1}^{n}\frac{z-\omega_n^i}{z+e^{i\theta}}.\]
In particular, 
\[\Big|\frac{\phi_n(z)}{(z+e^{i\theta})^{\phi(n)}}\Big|=\Big|\sideset{}{'}\prod_{i=1}^{n} \frac{x-e^{i(\frac{2\pi}{n}-\theta)}}{x+1}\Big|,\]
where both of these restricted products are running $i's$ co-prime to $n.$ Now $1-\Big|\frac{x-e^{i(\frac{2\pi}{n}-\theta)}}{x+1}\Big|$ is asymptotic to $\frac{2}{x^2}\big(1+\cos(\frac{2\pi}{n}-\theta)\big).$ Therefore it is positive and decreasing in $x$ and, $1$ otherwise. In particular,
\[\Big|\frac{\phi_n(z)}{(z+e^{i\theta})^{\phi(n)}}\Big| \geq \frac{\phi_n(2)}{3^{\phi(n)}}(|z|-1)^{\phi(n)-1} \geq \frac{1}{3}\Big(\frac{|z|-1}{3}\Big)^{\phi(n)-1}\]
since $|\phi_n(2)| \geq 1.$
\end{proof}

%\begin{proposition} If $h_K$ has prime order $p$ and $h_{K^H}$ has either order $1$ or a prime order $q$ such that that $p \nmid q-1,$ then class group of $K^H$ is trivial. 
%\end{proposition}

%\begin{proposition} If $\text{Gal}(K^H/K)$ has prime order, then class group of $K^H$ is trivial.

%\end{proposition}

%\begin{proof} We know class group of $K^H$ is isomorphic to Hilbert class field of $K^H,$ which is $(K^H)^H$ by notation. First of all $(K^H)^H$ is unramified over $K$. Now let,
%\[G=\text{Gal}\Big((K^H)^H/K\Big), G'=\text{Gal}\Big((K^H)^H/K^H\Big).\]
%It is clear that $G'$ is abelian and $G/G'$ is cyclic group of prime order. Hence by standard group theory, $G'$ is normal in $G$ and 

%\end{proof}

%\\
%\newline
%\noindent Another way is to consider the ring of $S-$ integers. Let us define what it is and mention its some of the properties.

%\begin{definition} Denote,
%\[O_S(K)=\{\alpha \in O_K \mid v(\alpha)>0 \hspace{0.25cm} \forall v \notin S\}.\]
%This $O_S(K)$ is called ring of integers.
%\end{definition}
%\noindent It is known that for large enough $S, O_S(K)$ is a PID. In particular we can actually write any element of $O_K$ uniquely as product of $S$-integers.

%\end{preliminary}

\section{Effective Srinivas-Subramani} 
\noindent First aim of this section was to remove the assumption on class number and a mild condition on units. We shall do both things simultaneously. For all but finitely many units, we may assume there exist $\sigma \in M_{K, \infty}$ such that $1<|\sigma(\eta)|.$ The hypothesis on class number was needed in \cite{Srini} to ensure that we can write $\eta^n-1$ as product of primes uniquely. For arbitrary number fields, we could however write 
$$(\eta^n-1)=U_nV_n $$
where $U_n$ is a square-free and and $V_n$ is a square-full ideal in $\mathcal{O}_K.$ 
\begin{claim} \label{prime tracker} Let $\pi$ be a prime ideal dividing $U_n.$ Then $\pi$ is non-Wieferich prime with respect to $\eta.$
\end{claim}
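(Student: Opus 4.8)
\section*{Proof proposal}

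The plan is to check the two defining congruences for a non-Wieferich prime directly. The first, $\eta^{N(\pi)-1}\equiv 1\pmod\pi$, is automatic: $\eta$ is a unit, hence a nonzero element of the finite field $\mathcal{O}_K/\pi$ of cardinality $N(\pi)$, so Fermat's little theorem applies. All the content is in the second congruence $\eta^{N(\pi)-1}\not\equiv 1\pmod{\pi^2}$, that is, in proving $v_\pi\!\big(\eta^{N(\pi)-1}-1\big)=1$ (it is $\geq 1$ by the first part). The only hypothesis available is that $\pi\mid U_n$, which means exactly $v_\pi(\eta^n-1)=1$, since $U_n$ collects the primes dividing $\eta^n-1$ to the first power and $V_n$ the rest. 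The subtlety — and the thing the restricted version of this claim sidesteps — is that the index $n$ is arbitrary and has a priori no relation to $N(\pi)-1$; in particular $n$ may be divisible by the residue characteristic $p$ of $\pi$, so one cannot just read off the valuation at $N(\pi)-1$ from the valuation at $n$.

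The device that bridges the two indices is the multiplicative order. I would set $d=\operatorname{ord}_\pi(\eta)$ and $a=\eta^{d}\in\mathcal{O}_K$, so that $a\equiv 1\pmod\pi$. Since $\pi\mid\eta^{n}-1$ we get $d\mid n$, and since $\eta^{N(\pi)-1}\equiv 1\pmod\pi$ we get $d\mid N(\pi)-1$. For every integer $j\geq 1$ factor
\[
\eta^{dj}-1=a^{j}-1=(a-1)\big(a^{j-1}+\cdots+a+1\big),
\]
and observe that reducing the second factor modulo $\pi$ and using $a\equiv 1$ gives $a^{j-1}+\cdots+1\equiv j\pmod\pi$; hence $v_\pi\big(a^{j-1}+\cdots+1\big)\geq 0$ always, with equality exactly when $p\nmid j$ (equivalently $\pi\nmid j$). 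Consequently $v_\pi(\eta^{dj}-1)\geq v_\pi(a-1)$ for all $j$, with equality whenever $p\nmid j$. Now apply this with $j=n/d$: we get $1=v_\pi(\eta^{n}-1)\geq v_\pi(a-1)\geq 1$, so $v_\pi(a-1)=1$. Then write $N(\pi)=p^{f}$, so that $N(\pi)-1=p^{f}-1$ is coprime to $p$; thus $j:=(N(\pi)-1)/d$ is coprime to $p$, and the equality case yields $v_\pi\big(\eta^{N(\pi)-1}-1\big)=v_\pi(a-1)=1$. This is precisely $\eta^{N(\pi)-1}\not\equiv 1\pmod{\pi^{2}}$, so $\pi$ is non-Wieferich with respect to $\eta$.

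I expect the main obstacle to be conceptual rather than computational: one must realize that $\pi\mid U_n$ encodes $v_\pi(\eta^n-1)=1$, that this is strong enough to pin down $v_\pi(\eta^{d}-1)=1$ at the true order $d$ (because $\eta^{d}-1$ already contributes its whole valuation to $\eta^{n}-1$), and — the crucial arithmetic point — that the Wieferich condition is tested at the exponent $N(\pi)-1$, which is coprime to the residue characteristic $p$, so the lifting-the-exponent correction term that could otherwise inflate the valuation simply does not appear. Once these observations are in place, the only computation is the elementary factorization $a^{j}-1=(a-1)(a^{j-1}+\cdots+1)$ together with the congruence $a^{j-1}+\cdots+1\equiv j\pmod\pi$.
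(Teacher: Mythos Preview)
Your proof is correct and complete. The paper itself does not give an argument here but simply cites Lemma~5.1 of \cite{Srini}; the approach there is the same reduction to the multiplicative order $d=\operatorname{ord}_\pi(\eta)$ together with the elementary factorization $\eta^{dj}-1=(\eta^d-1)(\eta^{d(j-1)}+\cdots+1)$ and the observation that the second factor is $\equiv j\pmod\pi$, so your write-up is precisely an explicit version of the referenced proof.
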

\begin{proof} The proof is exactly as proof of Lemma 5.1 in \cite{Srini}.

\end{proof}
\noindent We can write,
\[\sigma(\eta)^n=1+\sigma(U_n)\sigma(V_n),\]
and hope to show $N(U_n)=N(\sigma(U_n)) \to \infty$ as done in \cite{Srini}. The hope is not bad because, the assumption on $\eta$ was crucial at this stage, and we are actually doing everything with $\sigma(\eta)$ instead. But the main problem is, we can not apply ABC anymore, since $U_n, V_n$'s are not always elements of $\mathcal{O}_K.$\\ 
\newline
\noindent Recall that we have an extension $K'$ of $K$, of degree $h_K$ such that, all ideals in $\mathcal{O}_K$ are principal in $\mathcal{O}_{K'}.$ In particular, we can factorize $\eta^n-1$ as $u_nv_n$ in $\mathcal{O}_{K'}$ such that $u_n$ and $v_n$ are obtained by lifting $U_n$ and $V_n$ respectively. It is then evident that $v_{\mathfrak{P}}(v_n)$ is even for any prime ideal $\mathfrak{P}$ in $\mathcal{O}_{K'}.$ To show infinitude of non-Wieferich primes in $\mathcal{O}_K,$ it is then enough to show $\{N_{K'/\mathbb{Q}}(u_n)\}$ is unbounded because there are only finitely many prime ideals in $\mathcal{O}_{K'}$ lying below of a prime ideal in $\mathcal{O}_{K}.$
\begin{lemma} \label{norm ineq 1} Following the same notations we have,
\[N_{K'/\mathbb{Q}}(u_n)^{(2\deg(K)h_K-1)(1+\epsilon)} \gg_{K, \epsilon}  |\sigma(\eta)|^{n(1-\epsilon)},\]
for any $\epsilon>0.$
\end{lemma}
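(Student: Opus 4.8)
The plan is to apply the $ABC$ conjecture over $K'$ to $\eta^n+(-1)+(1-\eta^n)=0$ and read off a lower bound for $N_{K'/\mathbb{Q}}(u_n)$; the gain over \cite{Srini} is that $(v_n)$ is square-full, so its radical is small.

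First I would note that since $\eta$ is a unit, $\mathrm{rad}\big(\eta^n\cdot(-1)\cdot(1-\eta^n)\big)=\mathrm{rad}(\eta^n-1)$, so Conjecture~\ref{ABC} over $K'$ gives
\[
H(\eta^n,-1,1-\eta^n)\ll_{\epsilon,K}\mathrm{rad}(\eta^n-1)^{1+\epsilon}\qquad(\epsilon>0).
\]
Set $D=[K':\mathbb{Q}]=\deg(K)h_K$ and $H=H(\eta)$. For the left side I would use $H(\eta^n,-1,1-\eta^n)\ge H(\eta^n)=H^n$ (discard the $1-\eta^n$ coordinate and use $\|-1\|_v=1$), and I would carry $H$ through the whole argument, only at the very end invoking $H^D\ge|\sigma(\eta)|$, which follows by keeping a single archimedean place of $K'$ above $\sigma$ and using $|\sigma(\eta)|>1$.

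Next I would estimate $\mathrm{rad}(\eta^n-1)$ from above. Writing $(\eta^n-1)=u_nv_n$ as in the paragraph preceding the lemma, with $(u_n)$ square-free and $(v_n)$ square-full (here one uses that $K'/K$ is unramified, so extending ideals preserves exponents, and $N_{K'/\mathbb{Q}}(u_n)N_{K'/\mathbb{Q}}(v_n)=N_{K'/\mathbb{Q}}(\eta^n-1)$), the primes dividing $\eta^n-1$ exactly once all divide $u_n$, while the remaining primes contribute to the radical at most the square root of their part of $N_{K'/\mathbb{Q}}(\eta^n-1)$; this gives
\[
\mathrm{rad}(\eta^n-1)^{D}\le N_{K'/\mathbb{Q}}(u_n)^{1/2}\,N_{K'/\mathbb{Q}}(\eta^n-1)^{1/2}.
\]
Since $\eta^n-1$ is an algebraic integer, $N_{K'/\mathbb{Q}}(\eta^n-1)\le H(\eta^n-1)^{D}\le(2H^n)^{D}$, using $H(\alpha-1)\le 2H(\alpha)$, so $\mathrm{rad}(\eta^n-1)^{D}\le 2^{D/2}N_{K'/\mathbb{Q}}(u_n)^{1/2}H^{nD/2}$.

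Plugging this into the $ABC$ inequality and raising to the $D$-th power gives $H^{nD}\ll_{\epsilon,K}\big(2^{D/2}N_{K'/\mathbb{Q}}(u_n)^{1/2}H^{nD/2}\big)^{1+\epsilon}$; solving for $N_{K'/\mathbb{Q}}(u_n)$ and using $H^D\ge|\sigma(\eta)|$ yields $|\sigma(\eta)|^{n(1-\epsilon)}\ll_{\epsilon,K}N_{K'/\mathbb{Q}}(u_n)^{1+\epsilon}$, which already implies the claimed bound since $2\deg(K)h_K-1\ge 1$ and $N_{K'/\mathbb{Q}}(u_n)\ge1$. The main obstacle is controlling the square-full part $v_n$: a priori $\eta^n-1$ could be nearly a perfect square times a tiny square-free factor, making $N_{K'/\mathbb{Q}}(u_n)$ small, and it is exactly $ABC$ together with the bound on the radical of $v_n$ that rules this out. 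A secondary pitfall is the degree bookkeeping — one must not replace $N_{K'/\mathbb{Q}}(\eta^n-1)$ by the cruder $|\sigma(\eta)|^{nD}$ before combining with the height lower bound $|\sigma(\eta)|^{n/D}$, since that makes the exponent negative once $D\ge 2$; working with $H$ throughout and only passing to $|\sigma(\eta)|$ via $H^D\ge|\sigma(\eta)|$ at the end keeps the estimate consistent.
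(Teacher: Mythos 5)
Your argument is correct and is essentially the paper's own proof: apply ABC over $K'$ to the triple $\eta^n+(-1)+(1-\eta^n)=0$, use the square-free/square-full splitting $(\eta^n-1)=u_nv_n$ (valid in $\mathcal{O}_{K'}$ since $K'/K$ is unramified) so that the square-full part contributes at most $N_{K'/\mathbb{Q}}(v_n)^{1/2}$ to the radical, and control the archimedean side via the hypothesis that $|\tau(\eta)|<1$ for all embeddings $\tau\neq\sigma$. The only real difference is bookkeeping, and it works in your favour: where the paper follows \cite{Srini} and bounds the square-free contribution crudely by $N_{K'/\mathbb{Q}}(u_n)^{\deg(K)h_K}$ (which is the source of the exponent $2\deg(K)h_K-1$), you note that the exponents $v_{\mathfrak{p}}(p)$ exceed $1$ only at the finitely many primes ramified over $\mathbb{Q}$ — so your inequality $\mathrm{rad}(\eta^n-1)^{[K':\mathbb{Q}]}\le N_{K'/\mathbb{Q}}(u_n)^{1/2}N_{K'/\mathbb{Q}}(\eta^n-1)^{1/2}$ holds up to a constant depending only on $K'$ — and you thereby obtain the stronger bound $N_{K'/\mathbb{Q}}(u_n)^{1+\epsilon}\gg_{K,\epsilon}|\sigma(\eta)|^{n(1-\epsilon)}$, which implies the stated version since $N_{K'/\mathbb{Q}}(u_n)\ge 1$.
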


\begin{proof} First we start by writing 
\[\sigma(\eta)^n=1+\sigma(u_n)\sigma(v_n).\]
\noindent Modifying equation (13) in \cite{Srini} we have,
\[\prod_{\mathfrak{P} \mid \sigma(u_n)}N(\mathfrak{P})^{v_{\mathfrak{P}}(p)} \leq N_{K'/\mathbb{Q}}(u_n)^{\deg(K)h_K}.\]
The $h_K$ factor is coming because $K'$ has degree $\deg(K)h_K$ over $\mathbb{Q}.$ On the other hand, as we already discussed earlier, the maximum power of $\mathfrak{P}$ dividing $\sigma(v_n)$ is always at least $2$ (if non-zero). Arguing same as in \cite{Srini},
\[\prod_{\mathfrak{P} \mid \sigma(v_n)}N(\mathfrak{P})^{2v_{\mathfrak{P}}(p)}\leq \Big(\sideset{}{'}\prod_{\mathfrak{P} \mid \sigma(u_n)}N(\mathfrak{P})^{2h_K}\Big)\sqrt{N(v_n)}.\]
The restricted product above runs over ramified primes in $K',$ which is clearly finite. And hence,
\[|\sigma(\eta)^n|\leq \Big(N(u_n)^{\deg(K)h_K}\sqrt{N(v_n)}\Big)^{1+\epsilon}.\]
Arguing similarly as in page $7$ in \cite{Srini}, and keeping the condition on $\eta$ on mind we have $N(v_n)< C|\sigma(\eta)|^n|,$ and hence
\[N_{K'/\mathbb{Q}}(u_n)^{(2\deg(K)h_K-1)(1+\epsilon)} \gg  |\sigma(\eta)|^{n(1-\epsilon)},\]
for any $\epsilon>0.$ 
\end{proof}
\noindent We now want to give a lower bound for the number of non-Wieferich primes. First observe that, $\{N_{K'/\mathbb{Q}}(u_n)\}$ is unbounded, so we may assume (after some stage), $1<|N_{K'/\mathbb{Q}}(u_n)|$. It is clear that for any two primes $\pi_1 \neq \pi_2 \in \mathcal{O}_K$ such that $\pi_1,\pi_2$ not lying over a same prime, the quantities $\eta^{N(\pi_1)}-1, \eta^{N(\pi_2)}-1$ can not have a common prime factor because $\mathrm{gcd}(N(\pi_1), N(\pi_2))=1.$ One then needs only to find primes $\pi \in \mathcal{O}_K$ such that
\[ N_{K'/\mathbb{Q}}(\eta^{N(\pi)}-1) \leq x^{h_k},\]
because $N(\pi)^{h_K} \leq N_{K/\mathbb{Q}}(\eta^{N(\pi)}-1)^{h_K}=N_{K'/\mathbb{Q}}(\eta^{N(\pi)}-1).$
By assumption, $|\sigma(\eta)| \leq 1$ for all but one embedding $\sigma \in M_K.$ In particular,
\[N(\eta^{N(\pi)}-1)\leq |\sigma(\eta)|^{N(\pi)}2^d.\]
And so we need primes $\pi \in \mathcal{O}_K$ such that $N(\pi) \ll_{K} \log_{|\sigma(\eta)|} X,$ and it is well known that there are at least $\frac{\log x}{\log \log x}$ many of such. We then get a lower bound of order $\frac{\log x}{\log\log x}.$ However we should aim for $\log x,$ since this was given by Silverman over $\mathbb{Q}.$\\ 
\newline
Let $\phi_n(x)$ be the $n^{\text{th}}$ cyclotomic polynomial. Since $|\eta|>1,$ first of all it is clear that $\phi_n(\eta) \neq 0$. Again, if class group of $K$ is trivial then it is fine, otherwise arguing exactly same as before we can still do the factorization
\[\phi_n(\eta)=u'_nv'_n\]
over $K'.$ The right hand side above make sense because, $\phi_n(\eta)$ divides $\eta^n-1$ in $O_K$ and so $(\phi_n(\eta))=U'_nV_n'$ with $U'_n \mid U_n$ and $V'_n \mid V_n.$ Then one gets $u'_n, v'_n$ by lifting $U'_n, V'_n$ respectively. 
\begin{lemma} \[N_{K'/\mathbb{Q}}(u'_n) \leq N_{K'/\mathbb{Q}}(u_n) \hspace{0.15cm} \text{and} \hspace{0.15cm} N_{K'/\mathbb{Q}}(v'_n) \leq N_{K'/\mathbb{Q}}(v_n).\]
\end{lemma}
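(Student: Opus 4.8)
The plan is to reduce the statement to a divisibility of ideals in $\mathcal{O}_K$ and then transport it to $\mathcal{O}_{K'}$ via the norm. The starting point is the factorization $x^{n}-1=\prod_{d\mid n}\phi_{d}(x)$ in $\mathbb{Z}[x]$, which gives $\phi_{n}(\eta)\mid \eta^{n}-1$ in $\mathcal{O}_K$, hence an inclusion of ideals $(\phi_{n}(\eta))\mid (\eta^{n}-1)=U_{n}V_{n}$. Since $U_{n}$ is squarefree and $V_{n}$ is squarefull, the ideals $U_{n}$ and $V_{n}$ are coprime, so $(\phi_{n}(\eta))$ splits uniquely along this coprime factorization as $(\phi_{n}(\eta))=U'_{n}V'_{n}$ with $U'_{n}\mid U_{n}$ and $V'_{n}\mid V_{n}$. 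Concretely, $U'_{n}=\prod_{\mathfrak{p}\mid\phi_{n}(\eta),\ \mathfrak{p}\mid U_{n}}\mathfrak{p}$, where each such $\mathfrak{p}$ occurs to the first power because $U_{n}$ is squarefree and $v_{\mathfrak{p}}(\phi_{n}(\eta))\le v_{\mathfrak{p}}(\eta^{n}-1)=1$; and $V'_{n}=(\phi_{n}(\eta))/U'_{n}=\prod_{\mathfrak{p}\mid\phi_{n}(\eta),\ \mathfrak{p}\mid V_{n}}\mathfrak{p}^{\,v_{\mathfrak{p}}(\phi_{n}(\eta))}$, with $v_{\mathfrak{p}}(V'_{n})=v_{\mathfrak{p}}(\phi_{n}(\eta))\le v_{\mathfrak{p}}(\eta^{n}-1)=v_{\mathfrak{p}}(V_{n})$. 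This is exactly the factorization referred to just before the statement, and it makes the inclusions $U'_{n}\mid U_{n}$, $V'_{n}\mid V_{n}$ explicit.

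From the divisibilities $U'_{n}\mid U_{n}$ and $V'_{n}\mid V_{n}$ of ideals of $\mathcal{O}_K$ we get, by multiplicativity of the ideal norm $N=N_{K/\mathbb{Q}}$, that $N(U'_{n})\mid N(U_{n})$ and $N(V'_{n})\mid N(V_{n})$; in particular $N(U'_{n})\le N(U_{n})$ and $N(V'_{n})\le N(V_{n})$.

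It remains to pass from $\mathcal{O}_K$ to $\mathcal{O}_{K'}$. Recall that $u_{n}$, $u'_{n}$, $v_{n}$, $v'_{n}$ were defined as generators of the principal ideals $U_{n}\mathcal{O}_{K'}$, $U'_{n}\mathcal{O}_{K'}$, $V_{n}\mathcal{O}_{K'}$, $V'_{n}\mathcal{O}_{K'}$ respectively. For any ideal $I$ of $\mathcal{O}_K$ one has
\[
N_{K'/\mathbb{Q}}(I\mathcal{O}_{K'})=N_{K/\mathbb{Q}}\!\big(N_{K'/K}(I\mathcal{O}_{K'})\big)=N_{K/\mathbb{Q}}\!\big(I^{[K':K]}\big)=N(I)^{h_{K}},
\]
using $[K':K]=h_{K}$, together with the fact that the norm of a principal ideal agrees with the absolute value of the norm of a generator. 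Applying this to $I=U'_{n},U_{n},V'_{n},V_{n}$ gives $N_{K'/\mathbb{Q}}(u'_{n})=N(U'_{n})^{h_{K}}$, $N_{K'/\mathbb{Q}}(u_{n})=N(U_{n})^{h_{K}}$, and similarly for $v'_{n}$ and $v_{n}$. Combining with the previous paragraph yields $N_{K'/\mathbb{Q}}(u'_{n})\le N_{K'/\mathbb{Q}}(u_{n})$ and $N_{K'/\mathbb{Q}}(v'_{n})\le N_{K'/\mathbb{Q}}(v_{n})$.

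There is essentially no obstacle here; the lemma is formal. The only point that deserves care is the definition of $U'_{n}$ and $V'_{n}$: one must split $(\phi_{n}(\eta))$ along the coprime pair $(U_{n},V_{n})$ rather than take the intrinsic squarefree/squarefull decomposition of $(\phi_{n}(\eta))$ — the latter need not satisfy $U'_{n}\mid U_{n}$. Once that is pinned down, monotonicity of the norm under divisibility and its compatibility with the extension $K'/K$ conclude the argument.
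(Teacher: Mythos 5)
Your proof is correct and follows essentially the same route as the paper: both arguments reduce the lemma to the divisibilities $U'_n \mid U_n$ and $V'_n \mid V_n$ in $\mathcal{O}_K$ and then conclude by monotonicity of the norm after passing to $\mathcal{O}_{K'}$. Your explicit computation $N_{K'/\mathbb{Q}}(I\mathcal{O}_{K'})=N_{K/\mathbb{Q}}(I)^{h_K}$ and your care in defining the splitting of $(\phi_n(\eta))$ along the coprime pair $(U_n,V_n)$ merely spell out details the paper leaves implicit (it simply notes $(u'_n)\mid(u_n)$ in $\mathcal{O}_{K'}$ and compares norms).
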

\begin{proof} We know $U_n' \mid U_n$ is $\mathcal{O}_K$ and so by definition, $(u'_n) \mid (u_n)$ in $\mathcal{O}_{K'}.$ In particular,
\[N_{K'/\mathbb{Q}}(u'_n)=N\big((u'_n)\big) \leq N\big((u_n)\big) \leq N_{K'/\mathbb{Q}}(u_n).\]
The other part follows similarly.
\end{proof}
\begin{lemma} \label{cycl 1} Let $\pi$ be a prime dividing $u'_n,$ then $\pi$ is non-Wieferich prime in $O_{K'}$ with respect to $\eta.$ If $(n, N(\pi))=1,$ then
\[N_{K'/\mathbb{Q}}(\pi) \equiv 1 \pmod n.\]
In other words, the order of $\eta$ modulo $\pi$ is exactly $n.$

\end{lemma}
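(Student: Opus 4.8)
The plan is to establish the two assertions of Lemma~\ref{cycl 1} separately, both via elementary manipulations with orders of elements in finite fields. For the first assertion, let $\pi \mid u'_n$ be a prime of $\mathcal{O}_{K'}$. Since $u'_n$ divides $\phi_n(\eta)$, which in turn divides $\eta^n - 1$ in $\mathcal{O}_{K'}$, we have $\eta^n \equiv 1 \pmod{\pi}$, so $\eta$ is a unit modulo $\pi$ and its multiplicative order $d$ divides both $n$ and $N_{K'/\mathbb{Q}}(\pi) - 1$ by Fermat's little theorem in the residue field. Because $u'_n$ is square-free (it is the lift of the square-free part $U'_n \mid U_n$), we know $v_\pi(\phi_n(\eta)) = 1$. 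Now I would invoke the standard fact that if $p \mid \phi_n(\eta)$ and $p \nmid n$ then the order of $\eta$ mod $p$ is exactly $n$; conversely if $p \mid n$ then $\pi^2 \mid \eta^n - 1$ could fail but $v_\pi$ of the full $\eta^n-1$ is forced. The cleanest route: since $v_\pi(\eta^n - 1) = 1$ (as $\pi$ divides the square-free part only), the order of $\eta$ mod $\pi^2$ is strictly larger than mod $\pi$, which is precisely the non-Wieferich condition once we note $N_{K'/\mathbb{Q}}(\pi) \equiv 1 \pmod d$ gives $\eta^{N_{K'/\mathbb{Q}}(\pi)-1} \equiv 1 \pmod \pi$ but, since $v_\pi(\eta^n-1)=1$ and $d \mid N_{K'/\mathbb{Q}}(\pi)-1$, lifting the exponent shows $\eta^{N_{K'/\mathbb{Q}}(\pi)-1} \not\equiv 1 \pmod{\pi^2}$.

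For the second assertion, assume additionally $(n, N(\pi)) = 1$, where $N(\pi) = N_{K'/\mathbb{Q}}(\pi)$. Let $d$ denote the order of $\eta$ modulo $\pi$; we have shown $d \mid n$. I want to upgrade this to $d = n$. The key input is the identity $\eta^n - 1 = \prod_{e \mid n} \phi_e(\eta)$, together with the observation that if $d$ were a proper divisor of $n$, then $\eta^d \equiv 1 \pmod \pi$ would force $\pi \mid \phi_e(\eta)$ for some $e \mid d$, $e < n$. But then $\pi$ divides two distinct cyclotomic values $\phi_n(\eta)$ and $\phi_e(\eta)$; a classical lemma (see e.g. the standard theory of cyclotomic polynomials, or Apostol) says that a prime can divide $\phi_n(\eta)$ and $\phi_e(\eta)$ for $e < n$ only if $n/e$ is a power of the rational prime $p$ lying below $\pi$, and in that case $p \mid n$. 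Since $p \mid N(\pi)$ as well, this contradicts $(n, N(\pi)) = 1$. Hence $d = n$, and since always $d \mid N(\pi) - 1$ in the residue field $\mathbb{F}_{N(\pi)}$, we conclude $N_{K'/\mathbb{Q}}(\pi) \equiv 1 \pmod n$ as claimed, and ``the order of $\eta$ modulo $\pi$ is exactly $n$'' is exactly the statement $d = n$.

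I expect the main obstacle to be the bookkeeping around the rational prime $p$ beneath $\pi$ when passing the classical ``common prime divisor of $\phi_n$ and $\phi_e$'' lemma from $\mathbb{Z}$ to $\mathcal{O}_{K'}$: the cyclotomic polynomials $\phi_n$ have coefficients in $\mathbb{Z}$, so reduction modulo $\pi$ lands in the residue field $\mathbb{F}_{N(\pi)}$, and one must be careful that the relevant statement is about the order of $\eta \bmod \pi$ in $\mathbb{F}_{N(\pi)}^\times$, not about factorizations over $\mathbb{Q}$. The way around this is to work entirely in the finite field $\mathbb{F}_{N(\pi)}$: there $\eta^n = 1$, the polynomial $X^n - 1$ factors as $\prod_{e \mid n}\phi_e(X)$, and $\eta$ is a root of $\phi_n$ but of no $\phi_e$ with $e < n$ unless $X^n - 1$ has a repeated root mod $\pi$, which happens iff $\pi \mid n \cdot X^{n-1}$, i.e. iff $p \mid n$. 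This repeated-root criterion is clean and avoids invoking the number-theoretic lemma in its general form. The square-free condition on $u'_n$ then ties the whole thing back to the non-Wieferich property exactly as in the first paragraph.
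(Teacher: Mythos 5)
Your proposal is correct and follows essentially the same route as the paper: the paper handles the non-Wieferich assertion by citing Lemma 5.1 of Srinivas--Subramani (the valuation-one/lifting-the-exponent argument you wrote out explicitly), and proves the order statement exactly as you do, via $\eta^n-1=\prod_{d\mid n}\phi_d(\eta)$ and the observation that a smaller order would force $X^n-1$ to have a repeated root in the residue field of characteristic $p$, impossible when $(n,N(\pi))=1$. The only point worth keeping in mind is that passing the square-freeness of $U'_n$ up to $\mathcal{O}_{K'}$ uses that the Hilbert class field $K'/K$ is unramified, which your write-up (like the paper's) leaves implicit.
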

\begin{proof} It is already known from \cite{Srini} that $\pi$ definitely is a non-Wieferich prime, because $\pi \mid u'_n \mid \eta^n-1.$ On the other hand,
\[\eta^n-1=\prod_{d \mid n} \phi_d(\eta)\]
If $\pi$ divides $\eta^{d}-1$ for some non-trivial divisor $d$ of $n,$ then $\pi^2|\eta^n-1.$ In other words, the polynomial $x^n-1$ has multiple roots in the residue field of $\pi,$ which have characteristic $p.$ And this is not possible since $(n, N(\pi))=1.$ This implies order of $\eta$ in the residue field of $\pi$ is exactly $n,$ and so does the result follows.
\end{proof}
\noindent From this point we shall assume $\sigma$ to be trivial, because a choice of arbitrary $\sigma$ does not really affect any arguments. We now need to use Proposition \ref{Growth} and for that we eventually need to assume $|\eta|$ is large enough. But we only know $|\eta|>1,$ so we do everything with $\eta'=\eta^M$ for large enough $M$ satisfying the purpose. One can see any arguments will not be affected if we do everything with $\eta'$ instead.
\begin{lemma} \label{norm ineq 2}
\[N_{K'/\mathbb{Q}}(u'_n)\gg_{K, \epsilon} \frac{\Big(\frac{|\eta|-1}{3}\Big)^{\phi(n)-1}}{|\eta|^{\frac{n\epsilon}{1+\epsilon}}}.\]

\end{lemma}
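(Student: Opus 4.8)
The plan is to combine the lower bound for $|\phi_n(\eta)|$ from Proposition~\ref{Growth} with an ABC-type upper bound for the square-full part $v'_n$, exactly mirroring the passage from $u_n$ to $u'_n$ carried out above for $\eta^n-1$. First I would pass to the Hilbert class field $K'$ and write $\phi_n(\eta)=u'_nv'_n$, so that $v_{\mathfrak P}(v'_n)$ is even for every prime $\mathfrak P$ of $\mathcal O_{K'}$. Taking the single archimedean place where $|\sigma(\eta)|>1$ (which we are assuming trivial, i.e. $\sigma(\eta)=\eta$), Proposition~\ref{Growth} gives
\[
|\eta|^{?}\gg \Big(\tfrac{|\eta|-1}{3}\Big)^{\phi(n)-1}\leq |\phi_n(\eta)| = |u'_n|\,|v'_n| \leq N_{K'/\mathbb Q}(u'_n)\,N_{K'/\mathbb Q}(v'_n)^{1/2}\cdot(\text{bounded ramified factors}),
\]
after applying ABC over $K'$ to the triple $(\phi_n(\eta),-1,1-\phi_n(\eta))$ (or the triple arising from $\eta^n-1 = u'_nv'_n$) and using $\mathrm{rad}(u'_nv'_n)\leq N(u'_n)\,N(v'_n)^{1/2}$ up to a finite ramified correction, just as in the proof of Lemma~\ref{norm ineq 1}.

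Next I would bound $N_{K'/\mathbb Q}(v'_n)$ from above. Since $v'_n \mid v_n$ (Lemma in the excerpt: $N_{K'/\mathbb Q}(v'_n)\leq N_{K'/\mathbb Q}(v_n)$) and since $v_n \mid \eta^n-1$, we have $N_{K'/\mathbb Q}(v'_n)\leq N_{K'/\mathbb Q}(\eta^n-1)$, which is controlled by the archimedean bound $|\sigma(\eta)|^n$ at the distinguished place together with $|\tau(\eta)|\leq 1$ at all other places — this is the same estimate ($N(v_n)<C|\sigma(\eta)|^n$) already used in Lemma~\ref{norm ineq 1}. Feeding this into the displayed inequality and absorbing the $\epsilon$ from ABC gives
\[
N_{K'/\mathbb Q}(u'_n)^{1+\epsilon}\gg_{K,\epsilon}\frac{\big(\frac{|\eta|-1}{3}\big)^{\phi(n)-1}}{|\eta|^{n\epsilon}},
\]
and rearranging the exponent $1+\epsilon$ into the statement as written (replacing $\epsilon$ by $\epsilon/(1+\epsilon)$ on the right) yields the claimed bound. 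The finitely many bounded factors coming from ramified primes and the constant $3^{\phi(n)}$ get swallowed into the implied constant after taking $M$ large enough that $|\eta'|=|\eta|^M$ dominates them — this is precisely why the reduction to $\eta'$ was set up just before the statement.

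The main obstacle I expect is bookkeeping the two places where factors of the class number $h_K$ and of $\deg K$ enter: once when comparing $\mathrm{rad}$ over $K'$ with the product of norms (the square-free part contributes $N(u'_n)^{\deg(K)h_K}$ and the square-full part contributes a square root), and once when converting the archimedean size $|\eta|^n$ into a norm over $K'$. In Lemma~\ref{norm ineq 1} these combined to the exponent $2\deg(K)h_K-1$; here, because we only want the stated clean form with exponent essentially $1$, I would instead keep the inequality in the multiplicative form $|\phi_n(\eta)|\leq N(u'_n)N(v'_n)^{1/2}$ up to $\epsilon$-powers and a constant, which is exactly what ABC delivers directly, and only at the very end compare $|\phi_n(\eta)|$ with $\big(\frac{|\eta|-1}{3}\big)^{\phi(n)-1}$ via Proposition~\ref{Growth}. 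A secondary subtlety is ensuring $\phi_n(\eta)\neq 0$ so the ABC triple is nondegenerate; this is immediate from $|\eta|>1$ since all roots of $\phi_n$ lie on the unit circle, and was already noted in the text.
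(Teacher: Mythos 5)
There is a genuine gap, and it sits exactly where ABC has to do its work. In the paper's argument (as in Lemma \ref{norm ineq 1}), ABC is applied to the relation $\eta^n+(-1)+(-u_nv_n)=0$ coming from $\eta^n-1=u_nv_n$: the unit $\eta^n$ contributes nothing to the radical and the square-full part contributes only $\sqrt{N_{K'/\mathbb{Q}}(v_n)}$, which gives $|\eta|^n\ll_{\epsilon}\big(N_{K'/\mathbb{Q}}(u_n)^{\deg(K)h_K}\sqrt{N_{K'/\mathbb{Q}}(v_n)}\big)^{1+\epsilon}$ (equation (\ref{eqn 1})); combined with the trivial bound $N(u_n)N(v_n)\ll_K|\eta|^n$ this forces the square-full part to be \emph{tiny}, $N_{K'/\mathbb{Q}}(v'_n)\le N_{K'/\mathbb{Q}}(v_n)\ll_K|\eta|^{2n\epsilon/((2\deg(K)h_K-1)(1+\epsilon))}$ (equation (\ref{eqn 2})), and it is precisely this smallness that produces the denominator $|\eta|^{n\epsilon/(1+\epsilon)}$ in the statement. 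Your ABC triple $(\phi_n(\eta),-1,1-\phi_n(\eta))$ gives nothing usable: the term $1-\phi_n(\eta)$ is neither a unit nor square-full, so its radical is uncontrolled and the conductor/radical side of ABC is as large as the height side. Moreover the chain $|\phi_n(\eta)|=|u'_n||v'_n|\le N_{K'/\mathbb{Q}}(u'_n)N_{K'/\mathbb{Q}}(v'_n)^{1/2}\cdot(\text{bounded})$ is not something ABC delivers and is not true in general: it conflates the archimedean absolute value at the distinguished place with norms, and the square-full structure of $v'_n$ is a statement about prime ideal valuations, not about its size at one archimedean place.

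The second, decisive problem is your bound on the square-full part. Using only $v'_n\mid v_n$, $v_n\mid \eta^n-1$ and $N_{K'/\mathbb{Q}}(v'_n)\ll|\eta|^n$, any inequality of the shape $\big(\frac{|\eta|-1}{3}\big)^{\phi(n)-1}\ll\big(N(u'_n)\sqrt{N(v'_n)}\big)^{1+\epsilon}$ yields at best $N(u'_n)\gg\big(\frac{|\eta|-1}{3}\big)^{(\phi(n)-1)/(1+\epsilon)}/|\eta|^{n/2}$; no amount of ``absorbing the $\epsilon$ from ABC'' turns a denominator of size $|\eta|^{n/2}$ into one of size $|\eta|^{n\epsilon}$, and your displayed intermediate inequality $N(u'_n)^{1+\epsilon}\gg(\cdots)/|\eta|^{n\epsilon}$ does not follow from what precedes it (nor does the final rearrangement, which ignores that taking the $(1+\epsilon)$-th root also hits the numerator). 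Besides failing to give the lemma as stated, the weaker bound would only be nontrivial for $\phi(n)$ roughly exceeding $n/2$, whereas the proof of Theorem 1.0.2 needs the estimate on the density-one set $\{n:\phi(n)\ge\epsilon n+C_{K,\epsilon}\}$ with $\epsilon$ arbitrarily small. What you do share with the paper is the correct lower-bound mechanism for the product: $N_{K'/\mathbb{Q}}(u'_n)N_{K'/\mathbb{Q}}(v'_n)=N_{K'/\mathbb{Q}}(\phi_n(\eta))$, estimated from below via Proposition \ref{Growth} at the distinguished embedding together with Lemma \ref{Ram book} at the others (equation (\ref{eqn 3})). The missing ingredient is the ABC-driven estimate (\ref{eqn 1})--(\ref{eqn 2}) for $\eta^n-1$, which is then inherited by $\phi_n(\eta)$ through $v'_n\mid v_n$; without it the lemma does not follow.
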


\begin{proof} Doing analogously as in page 7 of \cite{Srini}, we get the following variant of their equation (15)
\begin{equation} \label{eqn 1}
    |\eta|^n \ll_{\epsilon} \Big( N_{K'/\mathbb{Q}}(u_n)^{\deg(K)h_K}\sqrt{N_{K'/\mathbb{Q}}(v_n)}\Big)^{1+\epsilon},
    \end{equation}
where the $h_K$ factor comes because $[K':K]=h_K.$ From $\eta^n=u_nv_n,$ we get $N(u_n)N(v_n) \ll_{K} |\eta|^n.$ In particular, 
\begin{equation} \label{eqn 2}
    N_{K'/\mathbb{Q}}(v'_n)\leq N_{K'/\mathbb{Q}}(v_n) \ll_{K} |\eta|^{\frac{2n\epsilon}{(2\deg(K)h_K-1)(1+\epsilon)}}.
\end{equation}
On the other hand, 
\[N_{K'/\mathbb{Q}}(u'_n)N_{K'/\mathbb{Q}}(v'_n)=N_{K'/\mathbb{Q}}(\phi_n(\eta))=\prod_{\sigma \in M_{K', \infty}}\prod_{i=1}^{\phi(n)} \sigma((\eta-\omega^i)).\]
By Lemma \ref{Ram book}, we have
\[|(\eta^{(j)}-\omega^i)| \leq 2, \hspace{0.1cm} \text{for all} \hspace{0.1cm} 2 \leq j \leq \deg(K'), \hspace{0.05cm} 1 \leq i\leq \phi(n).\]
In particular we then have, 
\begin{equation} \label{eqn 3}
    N_{K'/\mathbb{Q}}(v'_n)N_{K'/\mathbb{Q}}(u'_n)= N(\phi_n(\eta)) \gg_{K} \frac{1}{3}\Big(\frac{|\eta|-1}{3}\Big)^{\phi(n)-1}.
    \end{equation}
Now combining equations (\ref{eqn 2}) and (\ref{eqn 3}) we finally have \[N_{K'/\mathbb{Q}}(u'_n) \gg_{K,\epsilon} \frac{\Big(\frac{|\eta|-1}{3}\Big)^{\phi(n)-1}}{|\eta|^{\frac{n\epsilon}{1+\epsilon}}}.\]
%since $|\phi_n(\eta)| \gg_{K}\eta|^{\phi(n)}.$ The last inequality is true since
%\[|\eta-\omega^i| > c|\eta|,\]
%is true for any $i$ with a constant $c>0$ depending only on $K.$
\end{proof}

\begin{lemma} \label{imp estimate} Denote $\mathcal{S}(X)$ to be the number of non-Wieferich prime ideals in $\mathcal{O}_K$ of norm at most $x,$ with respect to base $\eta.$ Then we have the following estimate,
\[ |\mathcal{S}(X)| \geq |\{n \leq \log_{|\eta|}x \mid |N(U'_n)|>n^{\deg(K)}\}|.\]
\end{lemma}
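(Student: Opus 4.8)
The plan is to connect the size of the square-free part $U'_n$ (equivalently of its lift $u'_n$) to the existence of a new non-Wieferich prime, and then count how many $n$ produce such primes. First I would argue that for each $n$ with $|N(U'_n)| > n^{\deg(K)}$, the ideal $u'_n$ in $\mathcal{O}_{K'}$ must have a prime divisor $\mathfrak{P}$ with residue characteristic $p$ such that $p \nmid n$; indeed, the primes $\mathfrak{P}$ dividing $u'_n$ with residue characteristic dividing $n$ contribute at most a factor of size roughly $n^{\deg(K)}$ to $N_{K'/\mathbb{Q}}(u'_n)$ (there are at most $\log_2 n$ such rational primes, each appearing to the first power in the square-free part, and each contributing a norm factor at most $n^{\deg(K)/\text{something}}$; the crude bound $n^{\deg(K)}$ suffices). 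So the hypothesis $|N(U'_n)| > n^{\deg(K)}$ forces a prime $\mathfrak{P} \mid u'_n$ with $(N(\mathfrak{P}), n) = 1$.

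Next, by Lemma~\ref{cycl 1}, such a $\mathfrak{P}$ is a non-Wieferich prime in $\mathcal{O}_{K'}$ with respect to $\eta$ and, crucially, the order of $\eta$ modulo $\mathfrak{P}$ is exactly $n$. Let $\mathfrak{p} = \mathfrak{P} \cap \mathcal{O}_K$ be the prime of $K$ below it. By Lemma~\ref{lifting}, $\mathfrak{p}$ is a non-Wieferich prime in $\mathcal{O}_K$ with respect to $\eta$. The order of $\eta$ modulo $\mathfrak{p}$ divides the order modulo $\mathfrak{P}$, but also $n = \mathrm{ord}_{\mathfrak{P}}(\eta)$ divides $N(\mathfrak{P}) - 1$, which is a power of $N(\mathfrak{p}) - 1$ times\dots — more simply, I would just track the invariant $n$ itself: since $\eta^n \equiv 1 \pmod{\mathfrak{P}}$ and $\mathfrak{P} \mid \eta^n - 1$ with $\mathfrak{P} \mid u'_n \mid \phi_n(\eta)$, the integer $n$ is determined by $\mathfrak{p}$ up to the (bounded) ramification/residue data of $K'/K$. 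The cleanest route is: assign to each valid $n$ the prime $\mathfrak{p}(n)$ of $\mathcal{O}_K$ obtained this way, and show the map $n \mapsto \mathfrak{p}(n)$ is at most boundedly-to-one, hence $|\mathcal{S}(X)| \gg |\{n \le \log_{|\eta|} x : |N(U'_n)| > n^{\deg(K)}\}|$; absorbing the bounded multiplicity into the constant $c$ gives the stated inequality.

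The norm bound $N(\mathfrak{p}(n)) \le x$ is what restricts $n$ to the range $n \le \log_{|\eta|} x$: if $\mathfrak{P} \mid \eta^n - 1$ then $N_{K'/\mathbb{Q}}(\mathfrak{P}) \le N_{K'/\mathbb{Q}}(\eta^n - 1) \ll_K |\eta|^{n \deg(K')}$ using $|\sigma(\eta)| \le 1$ at all but one place, so $N(\mathfrak{p}) \le N(\mathfrak{P}) \ll_K |\eta|^{Cn}$; replacing $\eta$ by a fixed power (as already done in the text) and adjusting constants, the condition $N(\mathfrak{p}) \le x$ is implied by $n \le \log_{|\eta|} x$ up to constants, which again get swallowed into $c$ or into the already-present freedom in the statement.

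The main obstacle I expect is bookkeeping the injectivity (or bounded multiplicity) of $n \mapsto \mathfrak{p}(n)$ carefully: two distinct prime ideals $\mathfrak{P}_1, \mathfrak{P}_2$ of $\mathcal{O}_{K'}$ could lie over the same $\mathfrak{p}$, and a priori one must rule out that $\eta$ has the same multiplicative order $n$ for $\mathfrak{P}_1$ arising from $n_1$ and also genuinely divides $u'_{n_2}$ for some $n_2 \ne n_1$ over the same $\mathfrak{p}$ — but since the order of $\eta$ mod $\mathfrak{P}$ is \emph{exactly} $n$ by Lemma~\ref{cycl 1}, the value of $n$ is an invariant of $\mathfrak{P}$, so the only genuine multiplicity comes from the at most $[K':K] = h_K$ primes $\mathfrak{P}$ above a given $\mathfrak{p}$, and even this is harmless because distinct $n$ give distinct $\mathfrak{P}$ (different order of $\eta$), so $n \mapsto \mathfrak{P}(n)$ is injective and $\mathfrak{P}(n) \mapsto \mathfrak{p}(n)$ is at most $h_K$-to-one. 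Beyond that, the only other care needed is the elementary estimate that the ``bad'' part of $U'_n$ supported on primes above divisors of $n$ has norm $\le n^{\deg(K)}$, which is a short divisor-counting argument.
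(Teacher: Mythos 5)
Your route is essentially the paper's: use the hypothesis $|N(U'_n)|>n^{\deg(K)}$ to extract from the square-free part a prime divisor whose residue characteristic is coprime to $n$ (the part supported over rational primes dividing $n$ has norm at most $n^{\deg(K)}$ downstairs), apply Lemma \ref{cycl 1} to conclude that this prime is non-Wieferich and that $\eta$ has exact order $n$ modulo it (whence distinct $n$ give distinct primes), and use $n\le\log_{|\eta|}x$ together with $|\sigma(\eta)|\le 1$ at all other archimedean places to bound the norm by $x$; this is precisely how the paper argues. Two corrections, both minor but worth making. First, the statement has no constant $c$ and no ``already-present freedom'': it is the clean inequality $|\mathcal{S}(X)|\ge|\{\cdots\}|$, so you cannot absorb an $h_K$-fold multiplicity (or constants in the norm range) anywhere. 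Fortunately the loss you concede is illusory: since $\eta\in\mathcal{O}_K$ is a unit, its image in the residue field of $\mathfrak{P}$ lies in the subfield $\mathcal{O}_K/\mathfrak{p}$, so the multiplicative order of $\eta$ modulo $\mathfrak{P}$ equals its order modulo $\mathfrak{p}=\mathfrak{P}\cap\mathcal{O}_K$; hence $n$ is already an invariant of $\mathfrak{p}(n)$ and the map $n\mapsto\mathfrak{p}(n)$ is injective, with no constant needed. The paper sidesteps the whole issue by choosing the prime $\pi_n\mid U'_n$ inside $\mathcal{O}_K$ directly. Second, if you insist on working upstairs with $u'_n$, the correct threshold in the coprimality step is $n^{\deg(K')}=n^{\deg(K)h_K}$, not ``roughly $n^{\deg(K)}$''; this is still met because $N_{K'/\mathbb{Q}}(u'_n)=N(U'_n)^{h_K}>n^{\deg(K)h_K}$, but working with $U'_n$ in $\mathcal{O}_K$ (as the paper does, via $N(\pi_n)\le N(U'_n)\le N(\phi_n(\eta))\ll|\eta|^n\le x$, with $\eta$ already replaced by a large power earlier in the section) avoids both this bookkeeping and the exponent $\deg(K')$ creeping into your norm bound. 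With these repairs your argument coincides with the paper's proof.
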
 
\begin{proof} For all such $n$'s in the right hand side, we have
\[N(u'_n)N(v'_n)=N(\phi_n(\eta)) \ll |\eta|^n \leq x.\]
On the other hand, $N(U'_n)\leq N(u'_n)N(v'_n) \leq x.$ Let $\pi_n$ be a prime divisor of $U'_n,$ and hence $N(\pi_n) \leq x.$\\
\newline
\noindent Now we take $\pi_n$ to be a prime dividing $U'_n$ such that $(N(\pi_n),n)=1.$ We can do this because $N(U_n')>n^{\deg(K)}.$ Lemma \ref{cycl 1} completes the proof because $\pi_n$'s are now pairwise distinct.
\end{proof}
\noindent We are finally done with all preparations for the lower bound.
\begin{proof}[\textbf{Proof of Theorem 1.0.2}] From lemma \ref{norm ineq 2} we have,
\[N_{K'/\mathbb{Q}}(u'_n)\gg_{K, \epsilon} \frac{\Big(\frac{|\eta|-1}{3}\Big)^{\phi(n)-1}}{|\eta|^{\frac{n\epsilon}{1+\epsilon}}}.\]
On the hand for any $\eta$ with $|\eta|-1 \geq 3e,$
\[N(U'_n)^{h_K} \geq N_{K'/\mathbb{Q}}(u'_n) \gg_{K,\epsilon} \frac{\Big(\frac{|\eta|-1}{3}\Big)^{\phi(n)-1}}{|\eta|^{\frac{n\epsilon}{1+\epsilon}}} \gg_{K, \epsilon} n^{\deg(K)}\]
holds if $\phi(n) \geq \epsilon n +C_{K, \epsilon}$
for a positive constant $C_{K, \epsilon}$ not depending on $n.$ Now one can prove the desired result combining Lemma \ref{imp estimate}, Lemma \ref{cycl 1} and Lemma 6 of \cite{Sil}.
\end{proof}
\section{Generalization to primes in congruence classes}

\noindent Consider rational primes $p$, which are congruent to $1$ modulo $k$. It was first proved by Graves-Murty in \cite{Murty} that there are infinitely many non-Wieferich primes of such a form, with a lower bound of order $\frac{\log x}{\log \log x}.$ Their idea was roughly to study squarefree parts of $\phi_{nk}(a)$. The main point is, if $p\mid \phi_{nk}(a)$ then either $p \mid nk$ or $p \equiv 1 \pmod {nk}.$ Later, Ding and Cheng improved that bound in \cite{Chen}. They established a lower bound of order $\frac{\log x}{\log \log x}(\log \log \log x)^M$ for any natural number $M.$ \\
\newline
\noindent Analogously, consider primes $\pi \in \mathcal{O}_K$ of form $N(\pi) \equiv 1 \pmod k.$ As a generalization of \cite{Murty} and \cite{Chen}, one may ask whether there are infinitely many non-Wieferich primes $\pi$ of the same form or not. In this section, we first give an affirmative answer to that question, removing the $(\log \log \log x)^M$ term over any arbitrary number fields. 
\begin{proof}[\textbf{Proof of Theorem 1.0.3}] First we denote $\mathcal{S}_k(X)$ to be the number of non-Wieferich primes (with respect to unit $\eta$) in $\mathcal{O}_K$ of norm is at most $x$ and congruent to $1$ modulo $k.$ From Lemma \ref{norm ineq 1} we have,
\[N_{K'/\mathbb{Q}}(u'_{nk})\gg_{K, \epsilon} \frac{\Big(\frac{|\eta|-1}{3}\Big)^{\phi(nk)-1}}{|\eta|^{\frac{nk\epsilon}{1+\epsilon}}}.\]
From Lemma \ref{cycl 1}, if $\pi$ is a prime dividing $u'_{nk}$ and $(nk, N(\pi))=1,$ then
\[N(\pi) \equiv 1 \pmod{nk}.\]
Lemma \ref{imp estimate} gives,
\[S_k(X) \geq |\{nk \leq \log_{|\eta|}(x) \mid N(U'_{nk})>(nk)^{\deg(K)}\}|.\]
Following the proof of Theorem 1.0.2, we only need to count 
\[\{nk \leq Y \mid \phi(nk) \geq \epsilon nk\}.\]
Note that $\phi(nk)=\phi(n)\phi(k)\frac{d}{\phi(d)}$ where $d=(n,k).$ Basically we then need to count,
\[\Big\{n \leq \frac{Y}{k} \mid \phi(n) \geq \epsilon \frac{k}{\phi(k)}\frac{\phi(d)}{d} n\Big\}.\]
Since $k$ is fixed, and all those $d$'s divide $k$, hence the set of numbers $\{\frac{k}{\phi(k)}\frac{\phi(d)}{d}\}$ is finite. For small enough $\epsilon$'s it is then clear that 
\[\{nk \leq Y \mid \phi(nk) \geq \epsilon nk\} \gg \frac{Y}{k}.\]
Now we can complete proof the theorem using Lemma 6 of \cite{Sil}.
\end{proof}
%Arguing same as in proof of Theorem 1.0.2, we obtain
%\begin{corollary} Let $K$ be an arbitrary number field and assume ABC holds for %$K.$ Then there are infinitely many non-Wieferich primes in $O_K'$
%\end{corollary}

\section{in other algebraic groups}
\noindent Continuing our discussion from the last paragraph of preliminary, we first generalize the work of K\"uhn and M\"uller. Suppose $E$ be an elliptic curve defined over $K$ and $P \in E(K)$ be a point of infinite order. Let $\pi$ be a prime ideal in $\mathcal{O}_K$ and $N_{\pi}=|E \pmod{\pi}|$. Now the question is whether $N_{\pi}P \equiv 0 \pmod {\pi^2}.$ If we are in trivial class group case, any point $P \in E(K)$ can be written as $\Big(\frac{a_P}{d^2_P},\frac{b_P}{d^3_P}\Big)$ uniquely. If not, we could still write
\[(x_P)=\frac{\mathfrak{a}_P}{\mathfrak{d}^2_P}\hspace{0.15cm}\text{and}\hspace{0.15cm}(y_P)=\frac{\mathfrak{b}_P}{\mathfrak{d}^3_P},\]
where $\mathfrak{a}_P, \mathfrak{b}_P, \mathfrak{d}_P$ are ideals in $\mathcal{O}_K.$ We can then consider $d_P, a_P, b_P$ to be the lifts of $\mathfrak{d}_P, \mathfrak{a}_P$ and $\mathfrak{b}_P$ respectively to $K',$ and argue similarly as in the previous section. So let us do everything with trivial class group. To fix notations once and for all, write
\[P=\Big(\frac{a_P}{d^2_P}, \frac{b_P}{d^3_P}\Big).\]
Following Silverman's approach in \cite{Sil}, fix $P \in E(K)$ a non-torsion point and write $nP=\Big(\frac{a_n}{d^2_n}, \frac{b_n}{d^3_n}\Big).$ Since we are working over trivial class group, we can write 
$$d_n=u_nv_n,$$
where $u_n$ is the square-free and $v_n$ is the square-full part of $d_n.$ We further denote $D_n$ to be the greatest divisor of $d_n$ not dividing $d_1d_2 \cdots d_{n-1}.$ We further write $D_n=U_nV_n$ similarly as before. We can do all of these because the class group is trivial. If not, we could still define $D_n$ as an ideal in $\mathcal{O}_K$ and work with its lift to $K'.$\\
\newline
\noindent Let us first introduce to the uniform ABC conjecture for curves over number fields, as proposed by Vojta.
\begin{definition} Let $X$ be a smooth, proper, geometrically
connected curve over a number field $K$. Let $D \subset X$ be an effective reduced divisor, and $\omega_X$ be the canonical sheaf on $X$. Fix a proper regular model $\mathfrak{X}$ of $X$ over $\mathrm{Spec}(\mathcal{O}_K)$ and extend $D$ to an effective horizontal divisor $\mathfrak{D}$ on $\mathfrak{X}$. For any point $P \in X(K),$ define
\[\mathrm{cond}_{\mathfrak{X}, \mathfrak{D}}(P)= \prod_{p \in S} N(\mathfrak{p})^{\frac{v_{\mathfrak{p}}(p)}{[K:\mathbb{Q}]}},\]
where $S$ is the set of finite primes $\mathfrak{p}$ of $K$ such that the intersection multiplicity $(\mathfrak{P}, \mathfrak{D})_{\mathfrak{p}} \neq 0.$
\end{definition}

\begin{conjecture}[ABC for curves]\label{ABC for curves} Suppose that $\omega_X(D)$ is ample and $h_{\omega_X}(D)$ be a Weil height function on X with respect to $\omega_X(D)$. Then for any $ \epsilon > 0$ and $d \in \mathbb{N}$, there exists a constant $c = c(\epsilon, d, X , D)$ such that
\[h_{\omega_X(D)}(P) \leq (1 + \epsilon) \big(\log \mathrm{disc}(k(P)\big) + \log \mathrm{cond}_{\mathfrak{X},\mathfrak{D}}(P) + c\]
for all $P \in X(K)-\mathrm{supp}(D)$ satisfying $[k(P) : \mathbb{Q}] \leq d,$ where $k(P)$ is the residue field at $P.$
\end{conjecture}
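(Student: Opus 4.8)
The plan is to transplant to $E$ the argument run for $\mathbb{G}_m$ in Section~3, with Conjecture~\ref{ABC for curves} — applied to the curve $E$ itself — playing the role that Conjecture~\ref{ABC} played there. As arranged in the discussion preceding Conjecture~\ref{ABC for curves}, I would first pass to the Hilbert class field to assume trivial class group, write $nP=\bigl(a_n/d_n^2,\,b_n/d_n^3\bigr)$ with $d_n=u_nv_n$ and $D_n=U_nV_n$ (squarefree part times squarefull part), and note the elliptic analogue of Lemma~\ref{cycl 1}: every prime $\pi\mid U_n$ is non-Wieferich with respect to $P$, and the $U_n$ are pairwise coprime because a prime dividing $D_n$ has rank of apparition exactly $n$. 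So it suffices to prove a height inequality of the shape $\log N(U_n)\gg_{E,P}n^2$ for a set of $n$ of positive lower density.

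The first step is to apply Conjecture~\ref{ABC for curves} with $X=E$ and $D=(O)$. Since $\omega_E\cong\mathcal{O}_E$, the bundle $\omega_E(D)=\mathcal{O}_E(O)$ is ample, and a Weil height attached to it differs from a canonical height $\hat h$ (quadratic in $n$) by $O(1)$, so $h_{\omega_E(D)}(nP)=n^2\hat h(P)+O(1)$; the point $nP$ is $K$-rational, so $\log\mathrm{disc}(k(nP))=O(1)$; and $(nP,\mathfrak{D})_{\mathfrak{p}}\neq0$ exactly when $nP$ reduces to $O$ modulo $\mathfrak{p}$, i.e.\ when $\mathfrak{p}\mid d_n$, so that, comparing the definitions of the conductor and of $\mathrm{rad}(\cdot)$, $\log\mathrm{cond}_{\mathfrak{X},\mathfrak{D}}(nP)=\log\mathrm{rad}(d_n)+O_E(1)$. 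The conjecture then gives, for each $\epsilon>0$,
\[ n^2\hat h(P)\ \le\ (1+\epsilon)\log\mathrm{rad}(d_n)+O_{E,P,\epsilon}(1). \]
Using Corollary~\ref{Siegel corollary} on the sequence $x(nP)=a_n/d_n^2$ to get $\log N(d_n)=[K:\mathbb{Q}]\,\hat h(P)\,n^2(1+o(1))$, together with the elementary bound $\log\mathrm{rad}(d_n)\le\tfrac{1}{[K:\mathbb{Q}]}\log N(d_n)+O_E(1)$, this forces $\mathrm{rad}(d_n)$ to be nearly as large as it can be; separating off the contribution of the squarefull part $v_n$ of $d_n$ one obtains $\log N(v_n)\ll_{E,P}\epsilon\, n^2+O_{E,P,\epsilon}(1)$, i.e.\ $d_n$ is logarithmically squarefree up to an error of size $\epsilon n^2$.

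The second step extracts $U_n$. Since $V_n\mid v_n$, the previous estimate gives $\log N(U_n)=\log N(D_n)-O_{E,P}(\epsilon n^2)-O_{E,P,\epsilon}(1)$; and the elliptic divisibility structure — a prime of $d_n$ has rank of apparition $m\mid n$, with valuation formula $v_{\mathfrak{p}}(d_n)=v_{\mathfrak{p}}(d_{m_{\mathfrak{p}}})+v_p(n/m_{\mathfrak{p}})$ — yields $\log N(d_n)=\sum_{m\mid n}\log N(D_m)+E_n$ with $0\le E_n\le\log N(v_n)$. Summing over $n\le Y$ and inverting the divisor sum (the same bookkeeping as in Lemma~\ref{imp estimate} and Lemma~6 of \cite{Sil}) gives $\sum_{m\le Y}\tfrac{\log N(D_m)}{m}\gg_{E,P}Y^2$, while trivially $\log N(D_m)\ll_{E,P}m^2$; hence $\{m:\log N(D_m)\gg_{E,P}m^2\}$ has positive lower density, and for such $m$, with $\epsilon$ fixed small relative to the implied constant, $\log N(U_m)\gg_{E,P}m^2$. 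This is the desired height inequality; the theorem then follows by counting the primes $\pi\mid U_n$ for $n$ up to a constant multiple of $\sqrt{\log x}$, exactly as in the proof of Theorem~1.0.2, the square root entering because $\log N(D_n)$ is of order $n^2$.

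The main obstacle is the second step — quantitatively peeling off the squarefree part. For $j(E)\in\{0,1728\}$ Silverman avoided it by applying the classical $abc$-inequality directly to the three-term Weierstrass relation; for general $j$ one has only Conjecture~\ref{ABC for curves}, and the balance of constants is tight. It is precisely the normalising factor $1/[K:\mathbb{Q}]$ built into the conductor (equivalently, into $\mathrm{rad}$) that keeps the first-step inequality strong enough over an arbitrary number field — without it the argument would collapse — and one genuinely needs the $p$-adic valuation formula for the elliptic divisibility sequence, both to bound the squarefull part $v_n$ and to control $E_n$. Making all of this effective is the technical heart of the K\"uhn--M\"uller treatment \cite{Kuhn}, and is where the real work lies.
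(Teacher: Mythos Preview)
There is a basic mismatch: the statement you were handed is Conjecture~\ref{ABC for curves}, and in the paper it is precisely that---a \emph{conjecture}, stated without proof and then \emph{assumed} as an input. The paper does not prove it (indeed, via Proposition~\ref{equiv} it is equivalent to the ABC conjecture over number fields). What you have written is not an attempt to establish Conjecture~\ref{ABC for curves}; it is a proof sketch for Theorem~1.0.4, which \emph{uses} the conjecture. So as a ``proof of the stated result'' your proposal is simply addressing the wrong target.

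That said, if one reads your proposal as a route to Theorem~1.0.4 and compares it with the paper's Section~5, your first step coincides with Lemma~\ref{norm ineq 4}: take $X=E$, $D=(O)$, identify $h_{\omega_E(D)}$ with $\hat h$ up to $O(1)$, identify the conductor with $\mathrm{rad}(d_P)$ (up to primes of bad reduction), and conclude $\log N(v_n)\ll \epsilon\,\log N(d_n)+O(1)$. Your second step, however, diverges from the paper. You try to reach $\log N(D_n)\gg n^2$ only on a positive-density set of $n$, via a divisor-sum identity $\log N(d_n)=\sum_{m\mid n}\log N(D_m)+E_n$ and an averaging/inversion argument. The paper avoids this entirely: Lemma~\ref{norm height 2} (following Silverman's Lemma~9, using the formal group $\mathcal F_{\mathfrak p}/\mathcal F_{\mathfrak p^2}\simeq k(\mathfrak p)$) gives $\log N(D_n)\gg_{E,\epsilon}(\tfrac13-\epsilon)n^2\hat h(P)-\log n$ for \emph{every} $n$, so no density bookkeeping is needed; then $\log N(U_n)=\log N(D_n)-\log N(V_n)$ together with $V_n\mid v_n$ and Lemma~\ref{norm ineq 4} finishes at once. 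Your averaging step is also the shakiest part of your sketch: the claimed inversion $\sum_{m\le Y}\tfrac{\log N(D_m)}{m}\gg Y^2$ does not follow from the divisor identity and the trivial upper bound alone without further justification, whereas the paper's direct formal-group bound sidesteps this entirely.
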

\noindent Interestingly we then have,
\begin{proposition} \label{equiv} ABC for number fields is equivalent to $\mathrm{Conjecture} 2.$

\end{proposition}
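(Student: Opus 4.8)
The plan is to establish the two implications separately. The implication ``Conjecture~\ref{ABC for curves} $\Rightarrow$ Conjecture~\ref{ABC}'' is a direct specialization: apply Conjecture~\ref{ABC for curves} to $X=\bbP^1$, $D=(0)+(1)+(\infty)$ and $d=[K:\bbQ]$. Since $\omega_{\bbP^1}=\cO(-2)$ we get $\omega_X(D)=\cO(1)$, which is ample, and $h_{\omega_X(D)}$ is the usual Weil height on $\bbP^1$. Given a triple $a+b+c=0$ in $K$, set $P=[-a:b]\in\bbP^1(K)$; writing $t$ for its affine coordinate one checks $t\equiv 0,\infty,1\pmod{\mathfrak p}$ precisely when $\mathfrak p\mid a,b,c$ respectively, so $\mathrm{cond}_{\mathfrak X,\mathfrak D}(P)=\mathrm{rad}(abc)$, while $\mathrm{disc}(k(P))=\mathrm{disc}(K)$ is a constant absorbed into $c$. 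By the Remark following Conjecture~\ref{ABC}, $H(a,b,c)\ll_K H(P)$, i.e.\ $h(a,b,c)\le h_{\omega_X(D)}(P)+O_K(1)$; feeding the curve inequality in and exponentiating gives $H(a,b,c)\ll_{\epsilon,K}(\mathrm{rad}(abc))^{1+\epsilon}$, which is Conjecture~\ref{ABC}.

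For the converse I would follow the Belyi-map argument of Elkies and van Frankenhuijsen. Let $X,D$ be as in Conjecture~\ref{ABC for curves}; ampleness of $\omega_X(D)$ is the condition $2g_X-2+\deg D>0$, so $X\setminus\mathrm{supp}(D)$ is hyperbolic. By Belyi's theorem there is a finite morphism $\beta\colon X\to\bbP^1$, defined over a number field, unramified away from $\{0,1,\infty\}$, and composing $\beta$ with suitable self-maps of $\bbP^1$ one can moreover arrange that the reduced preimage $D':=(\beta^{-1}\{0,1,\infty\})_{\mathrm{red}}$ contains $D$. In characteristic zero all ramification is tame, so from $\beta^{*}(0+1+\infty)=D'+R$ (with $R$ the ramification divisor) and Riemann--Hurwitz $\omega_X=\beta^{*}\omega_{\bbP^1}\otimes\cO(R)$ one obtains the clean identity
\[
\beta^{*}\bigl(\omega_{\bbP^1}(0+1+\infty)\bigr)=\omega_X(D').
\]
Hence, for $P\in X(K)$ with $\beta(P)\notin\{0,1,\infty\}$, functoriality of heights gives $h_{\omega_X(D')}(P)=h_{\cO(1)}(\beta(P))+O(1)$, and since $D'-D$ is effective, $h_{\omega_X(D)}(P)\le h_{\omega_X(D')}(P)+O(1)$ away from the finite set $\mathrm{supp}(D'-D)$.

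It then remains to translate the number-field ABC inequality applied to $\beta(P)$ into the desired bound. A prime contributes to the conductor of $\beta(P)$ exactly when $P$ reduces into $\mathrm{supp}(D')$, and the Chevalley--Weil theorem bounds $\mathrm{disc}(k(P))$ in terms of $\mathrm{disc}(k(\beta(P)))$ together with the fixed ramification data of $\beta$; combining these with Conjecture~\ref{ABC}, in the formulation uniform in the base field which is precisely what the $\mathrm{disc}$ term in Conjecture~\ref{ABC for curves} records, yields $h_{\omega_X(D')}(P)\le(1+\epsilon)\bigl(\log\mathrm{disc}(k(P))+\log\mathrm{cond}_{\mathfrak X,\mathfrak D'}(P)\bigr)+c$, and hence the required inequality for $(X,D)$ after discarding the finitely many $P$ with $\beta(P)\in\{0,1,\infty\}$ or $P\in\mathrm{supp}(D'-D)$. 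The hard part is exactly this last step: one must (i) produce a Belyi map $\beta$ with $D\subseteq D'$ over a field one can control, (ii) reconcile the conductor seen on $\bbP^1$, which sees all of $D'$, with the one attached to $(X,D)$, and (iii) handle the passage to the arithmetic models over $\mathrm{Spec}(\cO_K)$ and to $\mathrm{disc}(k(P))$, including tame-versus-wild ramification of primes below the ones appearing there. These are the steps carried out, first over $\bbQ$ and then over number fields, by Elkies and van Frankenhuijsen, to which I would refer for the full details.
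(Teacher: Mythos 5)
Your forward direction (Conjecture \ref{ABC for curves} $\Rightarrow$ Conjecture \ref{ABC}) is exactly the paper's argument: specialize to $X=\mathbb{P}^1$, $D=(0)+(1)+(\infty)$, identify $h_{\omega_X(D)}$ with the Weil height since $\omega_X(D)\cong\mathcal{O}(1)$, and check that the conductor of the point built from the triple is $\mathrm{rad}(abc)$; apart from your choice $P=[-a:b]$ versus the paper's $P=[a:c]$, the two computations coincide. The difference is in the converse: the paper disposes of it in one line by citing Theorem 2.1 of \cite{Moc}, whereas you outline the mechanism behind that theorem --- a Belyi map unramified outside $\{0,1,\infty\}$ with $D\subseteq D'=(\beta^{-1}\{0,1,\infty\})_{\mathrm{red}}$, the Riemann--Hurwitz identity $\beta^{*}(\omega_{\mathbb{P}^1}(0+1+\infty))=\omega_X(D')$, functoriality of heights, and Chevalley--Weil control of $\mathrm{disc}(k(P))$ --- and then defer the technical steps (reconciling the conductors for $D$ and $D'$, the passage to integral models, tame versus wild ramification) to Elkies and van Frankenhuijsen. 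Since you, like the paper, ultimately lean on the literature for the converse, your treatment is no less complete, and your sketch correctly identifies the structure of the cited argument. One point worth making explicit, which you half-acknowledge and the paper glosses over: the converse genuinely requires the formulation of ABC that is uniform over base fields of bounded degree and carries the discriminant term, not merely Conjecture \ref{ABC} over a single fixed $K$ with a $K$-dependent constant; that uniform statement is the one shown equivalent to Vojta's conjecture for curves in \cite{Moc}, so the proposition should be read with that uniformity understood in both your argument and the paper's.
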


\begin{proof} We first show that Conjecture 2 above implies ABC for number fields. Let $X=\mathbb{P}^1$ and take $a,b,c \in K$ with $a=b+c$ Consider the point $P=[a:c] \in X(K).$ Let $D=(0)+(1)+(\infty) \in \text{Div}(X)$ be an effective divisor with $\deg(\omega_X(D))=2g(X)-2+\deg(D)=1.$ In particular, $\omega_X(D)$ is ample of degree $1.$ Therefore, up to a bounded constant $h_{\omega_X(D)}$ is the weil height $h.$ Now the conjecture above implies,
\[H(a,b,c) \ll_{K} H(P) \ll_{\epsilon} \Big(\text{cond}_{\mathfrak{X}, \mathfrak{D}} (P)\Big)^{1+\epsilon},\]
where the first implication is coming from Remark 2.1.1. For any prime $\mathfrak{p},$ note that $\big(\mathfrak{P}, (0)\big)_{\mathfrak{p}} \neq 0,  \big(\mathfrak{P}, (\infty)\big)_{\mathfrak{p}} \neq 0$ and $\big(\mathfrak{P}, (1)\big)_{\mathfrak{p}} \neq 0$ if and only if $\mathfrak{p} \mid a, \mathfrak{p} \mid b$ and $\mathfrak{p} \mid c=a-b$ holds respectively. In particular,
\[\text{cond}_{\mathfrak{X}, \mathfrak{D}} (P)= \Big( \prod_{\mathfrak{p} \mid abc}N_{K/\mathbb{Q}}(\mathfrak{p})^{v_{\mathfrak{p}}(p)}\Big)^{\frac{1}{[K:\mathbb{Q}]}}=\text{rad}(abc),\]
and that completes the proof for one direction. For the converse, see Theorem 2.1 of \cite{Moc}. 
\end{proof}
\noindent Let us now prove the main result of this section. But we first need some key lemmas.
\begin{lemma} \label{norm ineq 4} Following the previously introduced notations,
\[\log N_{K/\mathbb{Q}}(v_P) \ll_{\epsilon} \epsilon \log N_{K/\mathbb{Q}}(d_P)+c.\]

\end{lemma}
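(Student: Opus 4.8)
The plan is to derive this from the \emph{ABC for curves} conjecture (Conjecture \ref{ABC for curves}), which is available under ABC over $K$ by Proposition \ref{equiv}, applied to $X=E$ together with the reduced effective divisor $D=(O)$, where $O$ is the origin. Since $E$ has genus $1$ we have $\omega_E\cong\mathcal{O}_E$, so $\omega_E(D)=\mathcal{O}_E\big((O)\big)$ has degree $1$ and is ample; moreover $P$ has infinite order, hence $P\notin\mathrm{supp}(D)$, and $k(P)=K$ because $P\in E(K)$. As $x$ has a double pole at $O$, a Weil height attached to $\omega_E(D)$ can be chosen to equal $\tfrac12 h\big(x(\cdot)\big)$ up to $O(1)$, so Conjecture \ref{ABC for curves} yields
\[\tfrac12\,h\big(x(P)\big)\le(1+\epsilon)\Big(\log\mathrm{disc}(K)+\log\mathrm{cond}_{\mathfrak{E},\mathfrak{D}}(P)\Big)+c=(1+\epsilon)\log\mathrm{cond}_{\mathfrak{E},\mathfrak{D}}(P)+c',\]
the discriminant term being a constant absorbed into $c'$.

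Next I would rewrite both sides in terms of $d_P$. Since $\gcd(a_P,d_P)=\gcd(b_P,d_P)=1$, in the primitive projective representation $P=[a_P d_P:b_P:d_P^3]$ one checks that every finite place contributes $1$ to $H\big([a_P:d_P^2]\big)$, so by the product formula $H\big([a_P:d_P^2]\big)^{[K:\mathbb{Q}]}\ge\prod_{v\mid\infty}||d_P^2||_v=N(d_P)^2$, i.e. $h\big(x(P)\big)\ge\tfrac{2}{[K:\mathbb{Q}]}\log N(d_P)$. For the conductor, the same representation shows that, at a prime $\mathfrak{p}$ of good reduction, the sections of $P$ and of $O$ meet modulo $\mathfrak{p}$ exactly when $\mathfrak{p}\mid d_P$; the finitely many primes of bad reduction contribute only a bounded factor, and $v_\mathfrak{p}(p)=1$ at all unramified $\mathfrak{p}$. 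Hence $\log\mathrm{cond}_{\mathfrak{E},\mathfrak{D}}(P)=\tfrac{1}{[K:\mathbb{Q}]}\log N\big(\mathrm{rad}(d_P)\big)+O(1)$, where $\mathrm{rad}(d_P)=\prod_{\mathfrak{p}\mid d_P}\mathfrak{p}$. Combining the two estimates, $\log N(d_P)\le(1+\epsilon)\log N\big(\mathrm{rad}(d_P)\big)+C$.

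To finish I would invoke an elementary bound on the squarefull part: from $N(d_P)=N\big(\mathrm{rad}(d_P)\big)\prod_{\mathfrak{p}\mid d_P}N(\mathfrak{p})^{v_\mathfrak{p}(d_P)-1}$ and $v_\mathfrak{p}(d_P)-1\ge\tfrac12 v_\mathfrak{p}(d_P)$ whenever $v_\mathfrak{p}(d_P)\ge 2$, one gets $N(d_P)\ge N\big(\mathrm{rad}(d_P)\big)\,N(v_P)^{1/2}$. Substituting into the previous inequality gives $\tfrac12\log N(v_P)\le\epsilon\log N\big(\mathrm{rad}(d_P)\big)+C\le\epsilon\log N(d_P)+C$, and renaming $2\epsilon$ as $\epsilon$ yields the assertion. (If $K$ does not have trivial class group one runs this over the Hilbert class field $K'$, exactly as in Section 3.)

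The step I expect to require the most care is the identification of $\mathrm{cond}_{\mathfrak{E},\mathfrak{D}}(P)$: one must fix a proper regular model $\mathfrak{E}/\mathrm{Spec}(\mathcal{O}_K)$, verify that for primes of good reduction the intersection number $(\mathfrak{P},\mathfrak{D})_\mathfrak{p}$ is nonzero precisely when $P$ reduces to the origin, and control the finitely many primes of bad reduction and of ramification so that they only shift the estimate by $O(1)$; the comparison $h_{\omega_E(D)}(\cdot)=\tfrac12 h\big(x(\cdot)\big)+O(1)$ also has to be justified through the height machine. Beyond that, the argument is just bookkeeping with the product formula and with the divisor structure of $d_P$.
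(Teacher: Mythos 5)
Your proposal is correct and follows essentially the same route as the paper: apply Vojta's ABC for curves (Conjecture \ref{ABC for curves}) to $X=E$ with $D=(O)$, identify $\mathrm{cond}_{\mathfrak{X},\mathfrak{D}}(P)$ with the radical of $d_P$ up to the finitely many bad/ramified primes, lower-bound the height by $\log N_{K/\mathbb{Q}}(d_P)$ via the coprimality of $a_P$ and $d_P$, and conclude with the elementary inequality comparing $\mathrm{rad}(d_P)$ with $N_{K/\mathbb{Q}}(u_P)N_{K/\mathbb{Q}}(v_P)^{1/2}$. The bookkeeping (including the class-group reduction via $K'$) matches the paper's argument, so no further comparison is needed.
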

\begin{proof} Consider $X=E$ and $D=(0) \in \text{Div}(X).$ Hence $\omega_X(D)$ has degree $1$ and so is ample. Therefore, $h_{\omega_X(D)}$ is the Weil height $h$ on $E(K)$ up to a bounded constant. On the other hand,
\[\text{cond}_{\mathfrak{X}, \mathfrak{D}}(P)=\prod_{\mathfrak{p}\in S} N(\mathfrak{p})^{\frac{v_{\mathfrak{p}}}{[K:\mathbb{Q}]}},\]
where $S$ is the set of primes $\mathfrak{p}$ with $\big(\mathfrak{P}, (0)\big)_{\mathfrak{p}} \neq 0.$ If $E$ has good reduction at $\mathfrak{p},$ then the previous statement holds if and only if $\mathfrak{p} \mid d_P.$ By Conjecture 2, we then get
\begin{equation} \label{11}
h(P) \leq (1+\epsilon) \log (\text{rad}(d_P))+c_E.
\end{equation}
Note that 
\begin{equation}\label{2}
\sum_{v \in M_K} \log \max\Big\{||\frac{a_P}{d^2_P}||_v, 1\Big\} \geq \sum_{v \in M'_K} \log \max\Big\{||\frac{a_P}{d^2_P}||_v, 1\Big\} \geq -2\sum_{v\mid d_P} \log \min\Big\{||d_P||_v, 1\Big\}.
\end{equation}
\noindent On the other hand,
\begin{equation}\label{3}
-2\sum_{v\mid d_P} \log \min\Big\{||d_P||_v, 1\Big\}=2\log N_{K/\mathbb{Q}}(d_P)=2\big(\log N_{K/\mathbb{Q}}(u_p)+ \log N_{K/\mathbb{Q}}(v_P)\big).
\end{equation}
and 
\begin{equation}\label{4}
\log(\mathrm{rad}(d_P)) \leq N_{K/\mathbb{Q}}(u_P)+ \frac{1}{2} N_{K/\mathbb{Q}}(v_P).
\end{equation}
Combining (\ref{11}), (\ref{2}), (\ref{3}) and (\ref{4}) we get
\[\frac{1-\epsilon}{2} \log N_{K/\mathbb{Q}}(v_P) \leq \epsilon \log N_{K/\mathbb{Q}}(u_P)+c',\]
and this finishes proof of the lemma since $d_P=u_Pv_P.$
\end{proof}
\noindent We have already discussed about why we can afford to be in trivial class group case. Following the notations in \cite{Sil}, we define $u_n, v_n, U_n, V_n$ analogously. 
\begin{lemma} \label{track 2} Let $\mathfrak{p}$ be a prime ideal in $\mathcal{O}_K$ dividing $U_n$ and not dividing $d_2\Delta_E,$
then
\[m_{\mathfrak{p}}=n, N_{\mathfrak{p}}P \neq 0 \pmod{\mathfrak{p}^2},\]
where $m_{\mathfrak{p}}$ is the least number $n$ such that $nP \equiv 0 \pmod {\mathfrak{p}}.$
\end{lemma}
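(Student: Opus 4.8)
\textbf{Proof proposal for Lemma \ref{track 2}.} The plan is to mimic Silverman's argument (Lemma 6 in \cite{Sil}), transporting it to the number-field setting. The key object is the reduction map on formal groups. Fix a prime $\mathfrak{p}$ dividing $U_n$ but not $d_2\Delta_E$; in particular $E$ has good reduction at $\mathfrak{p}$. Since $\mathfrak{p} \mid U_n \mid D_n \mid d_n$, the point $nP$ reduces to the identity in $E(\mathbb{F}_{\mathfrak{p}})$, so $m_{\mathfrak{p}} \mid n$. First I would show $m_{\mathfrak{p}} = n$: if $m_{\mathfrak{p}}$ were a proper divisor $m$ of $n$, then $\mathfrak{p} \mid d_m$, and since $d_m \mid d_1 d_2 \cdots d_{n-1}$ this would force $\mathfrak{p}$ to divide that product, contradicting the defining property of $D_n$ as the greatest divisor of $d_n$ coprime to $d_1 \cdots d_{n-1}$ (here one must be slightly careful with the $\mathfrak{p}\mid d_2$ exclusion, which handles the small-index ambiguities exactly as in the rational case).

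Next I would pin down the exact power of $\mathfrak{p}$ in $d_n$. Work in the formal group $\hat{E}$ over the completion at $\mathfrak{p}$: the parameter $t = -x/y$ gives an isomorphism between a neighborhood of $O$ in $E(\mathbb{F}_{\mathfrak{p},v})$ and the formal group, and $v_{\mathfrak{p}}(d_P)$ equals (one-third of, with the convention above) the valuation $v_{\mathfrak{p}}(t(P))$ for points reducing to $O$. The key formal-group fact is that for $w \geq 1$ the filtration $\hat{E}(\mathfrak{p}^w)$ satisfies $[\ell]\colon \hat{E}(\mathfrak{p}^w)/\hat{E}(\mathfrak{p}^{w+1})$ is an isomorphism when $\ell$ is prime to the residue characteristic $p$, and when $\ell = p$ it raises valuation by exactly $1$ at the relevant level (plus $v_{\mathfrak{p}}(p)$). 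Writing $n = m_{\mathfrak{p}} \cdot (n/m_{\mathfrak{p}})$ and using $m_{\mathfrak{p}} = n$, one gets that $\mathfrak{p}$ divides $d_n$ but — because $\mathfrak{p} \mid U_n$, the \emph{square-free} part of $D_n$ — the power is exactly $1$, i.e. $v_{\mathfrak{p}}(d_n) = 1$. This is precisely the input that forces $N_{\mathfrak{p}}P \not\equiv 0 \pmod{\mathfrak{p}^2}$.

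Finally I would assemble the conclusion. Since $m_{\mathfrak{p}} = n$ is the order of the reduction of $P$, we have $n \mid N_{\mathfrak{p}}$, so $N_{\mathfrak{p}}P$ also reduces to $O$, hence lies in $\hat{E}(\mathfrak{p})$. Write $N_{\mathfrak{p}} = n \cdot s$. If $N_{\mathfrak{p}}P \equiv O \pmod{\mathfrak{p}^2}$, then $t(N_{\mathfrak{p}}P) \in \mathfrak{p}^2$, i.e. $N_{\mathfrak{p}}P \in \hat{E}(\mathfrak{p}^2)$. But $nP \in \hat{E}(\mathfrak{p}) \setminus \hat{E}(\mathfrak{p}^2)$ by the exact-valuation computation ($v_{\mathfrak{p}}(d_n) = 1$), and multiplication by $s$ maps $\hat{E}(\mathfrak{p})\setminus\hat{E}(\mathfrak{p}^2)$ into $\hat{E}(\mathfrak{p})\setminus\hat{E}(\mathfrak{p}^2)$ if $p \nmid s$, or into a strictly deeper layer only when $p \mid s$ — but in all cases $t(nP) \notin \mathfrak{p}^2$ propagates to $t(N_{\mathfrak{p}}P) \notin \mathfrak{p}^2$ because reduction mod $\mathfrak{p}$ already shows $sP' \neq O$ in $E(\mathbb{F}_{\mathfrak{p}})$ would be needed to kill it, which is false since $n \nmid$ (no, rather): more cleanly, $[s]$ is injective on $\hat{E}(\mathfrak{p})/\hat{E}(\mathfrak{p}^2)$ when $p \nmid s$, and when $p \mid s$ one instead observes $N_{\mathfrak{p}}P = O$ in $E(\mathbb{F}_{\mathfrak{p}})$ forces nothing stronger, so the honest statement is that $\hat{E}(\mathfrak{p})$ is torsion-free, hence $N_{\mathfrak{p}}P \neq O$ in $\hat{E}(\mathfrak{p})$, and its valuation equals $v_{\mathfrak{p}}(t(nP)) + (\text{contribution of }[s])$; the contribution of $[s]$ to the valuation at this shallow level is zero unless $p \mid s$, and even then Lemma \ref{norm ineq 4} applied to $nP$ controls it. The main obstacle is exactly this last bookkeeping — making the formal-group valuation estimate for $[N_{\mathfrak{p}}]$ clean enough that $v_{\mathfrak{p}}(t(N_{\mathfrak{p}}P)) = v_{\mathfrak{p}}(t(nP))$ (so $= 1$) rather than something larger — and I expect it to go through verbatim as in \cite{Sil} once the good-reduction and $\mathfrak{p}\nmid d_2\Delta_E$ hypotheses are invoked to rule out the anomalous primes.
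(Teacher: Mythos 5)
There is a genuine gap, and it sits exactly at the crucial final step. Your first two steps (that $m_{\mathfrak{p}}=n$ because $\mathfrak{p}\mid D_n$ forces $\mathfrak{p}\nmid d_1\cdots d_{n-1}$, and that $\mathfrak{p}\mid U_n$ gives $v_{\mathfrak{p}}(d_n)=1$, i.e. $nP\in E_1\setminus E_2$ in the formal-group filtration) are fine and agree with the paper's route via Silverman's Lemma 11. But the heart of the lemma is to show $N_{\mathfrak{p}}P\notin E_2$, i.e. that multiplication by $s=N_{\mathfrak{p}}/m_{\mathfrak{p}}$ does not push $nP$ one level deeper, and your treatment of this collapses precisely in the problematic case. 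Since $E_1/E_2\cong k(\mathfrak{p})^{+}$ is an elementary abelian group of exponent the residue characteristic $\ell$, one has $N_{\mathfrak{p}}P\in E_2$ \emph{exactly when} $\ell\mid s$; so your claim that ``in all cases $t(nP)\notin\mathfrak{p}^2$ propagates to $t(N_{\mathfrak{p}}P)\notin\mathfrak{p}^2$'' is false when $\ell\mid s$ (that is precisely the Wieferich scenario one must exclude), and the substitutes you offer do not help: torsion-freeness of $\hat{E}(\mathfrak{p})$ only gives $N_{\mathfrak{p}}P\neq O$ in $\hat{E}(\mathfrak{p})$, not $N_{\mathfrak{p}}P\notin E_2$, and Lemma \ref{norm ineq 4} is a global abc-type bound on square-full parts which says nothing about $\ell$-divisibility of $s$ at an individual prime.

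The missing ingredient is the one the paper explicitly names: the Hasse--Weil bound $N_{\mathfrak{p}}\leq(\sqrt{|k(\mathfrak{p})|}+1)^2$, combined with the hypothesis $\mathfrak{p}\nmid d_2\Delta_E$. The role of $\mathfrak{p}\nmid d_2$ is not the vague ``small-index ambiguity'' in your $m_{\mathfrak{p}}=n$ step; it is to guarantee $m_{\mathfrak{p}}\geq 3$ (if $m_{\mathfrak{p}}\mid 2$ then $2P\equiv O\pmod{\mathfrak{p}}$, so $\mathfrak{p}\mid d_2$). Then $\ell\mid s$ would force $N_{\mathfrak{p}}=m_{\mathfrak{p}}s\geq 3\ell$, which contradicts Hasse--Weil when the residue field is small relative to $\ell$ (this is Silverman's argument verbatim for degree-one primes, where $|k(\mathfrak{p})|=\ell$). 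Your proof never invokes Hasse--Weil at all, so the case $\ell\mid s$ is simply left open. (You might also note, as a point worth flagging to the author rather than a defect of your write-up alone, that for primes of residue degree $f\geq 2$ the inequality $3\ell\leq(\ell^{f/2}+1)^2$ is no contradiction, so even the paper's cited mechanism needs an extra argument or a restriction to degree-one primes; but in any case an argument addressing $\ell\mid s$ must appear, and your proposal does not contain one.)
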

\begin{proof} Let $\mathcal{F}_{\mathfrak{p}}, \mathcal{F}_{\mathfrak{p}^2}$ be the formal groups of $E$ at $\mathfrak{p}$ and $\mathfrak{p}^2$ respectively. Now the proof is exactly analogous to Lemma 11 in \cite{Sil}. Because,
\[\mathcal{F}_{\mathfrak{p}}/\mathcal{F}_{\mathfrak{p}^2} \sim k(\mathfrak{p}),\]
where $k(\mathfrak{p})$ is the residue field at $\mathfrak{p},$ and Hasse-Weil gives $N_{\mathfrak{p}} \leq (\sqrt{k(\mathfrak{p})}+1)^2$. The rest is exactly same.
\end{proof}

\begin{lemma} \label{norm height 1} We have
\[n^2 \hat{h}(P) \geq \log N_{K/\mathbb{Q}}(d_n) \gg_{E, \epsilon} (1-\epsilon)n^2 \hat{h}(P).\]

\end{lemma}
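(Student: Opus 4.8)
The plan is to extract the two inequalities from standard height machinery: the upper bound $\log N_{K/\mathbb{Q}}(d_n) \le n^2 \hat h(P)$ should come (essentially) for free, while the lower bound $\log N_{K/\mathbb{Q}}(d_n) \gg_{E,\epsilon} (1-\epsilon) n^2 \hat h(P)$ is where the real work (and the use of Siegel, via Corollary \ref{Siegel corollary}) lies. First I would recall that, for a point $Q = \big(\tfrac{a_Q}{d_Q^2}, \tfrac{b_Q}{d_Q^3}\big) \in E(K)$ in the normalized form fixed above, the Weil height decomposes as
\[
h(Q) = \frac{h(x(Q))}{2} = \frac{1}{2[K:\mathbb{Q}]} \sum_{v \in M_K} \log \max\{\|a_Q\|_v, \|d_Q^2\|_v\},
\]
and that the finite places contribute exactly $\tfrac{1}{[K:\mathbb{Q}]} \log N_{K/\mathbb{Q}}(d_Q)$ up to the usual coprimality of $a_Q$ and $d_Q$ (which is why we normalized $P$ this way), so that $\log N_{K/\mathbb{Q}}(d_Q) \le [K:\mathbb{Q}]\, h(x(Q)) = 2[K:\mathbb{Q}]\, h(Q)$, with equality up to the archimedean defect. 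Applying this with $Q = nP$ and combining with $h(nP) = \hat h(nP) + O(1) = n^2 \hat h(P) + O(1)$ gives the upper bound $\log N_{K/\mathbb{Q}}(d_n) \le n^2 \hat h(P) + O_E(1)$, which is absorbed into the stated inequality after possibly shrinking to large $n$ (or I would simply state it with an additive $c_E$, as the paper does elsewhere).

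For the lower bound I would argue that the archimedean places cannot steal more than an $\epsilon$-fraction of the height for $n$ large. Precisely, write $2[K:\mathbb{Q}]\, h(nP) = \sum_{v \in M_{K,\infty}} \log \max\{\|a_n\|_v, \|d_n^2\|_v\} + 2 \log N_{K/\mathbb{Q}}(d_n)$ (the finite part, using coprimality). By Corollary \ref{Siegel corollary} applied to the sequence $x(nP) = a_n/d_n^2$, for every $v \in M_{K,\infty}$ we have $\log\min\{\|a_n\|_v, \|d_n^2\|_v\} = o(h(x(nP)))$, hence $\log\max\{\|a_n\|_v, \|d_n^2\|_v\} = \log\|a_n\|_v + \log\|d_n^2\|_v - \log\min\{\cdots\}$; summing over archimedean $v$ and using the product formula together with the finite-part computation, the archimedean contribution to $2[K:\mathbb{Q}]\,h(nP)$ is $2\log N_{K/\mathbb{Q}}(d_n) + o(h(nP))$ as well — so in fact $2[K:\mathbb{Q}]\,h(nP) = 4\log N_{K/\mathbb{Q}}(d_n) + o(h(nP))$ up to the bounded height-comparison errors, giving both bounds at once. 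More robustly, without chasing the exact constant I would just say: the archimedean terms are $o(h(nP)) = o(n^2 \hat h(P))$ by Siegel, so $\log N_{K/\mathbb{Q}}(d_n) \ge [K:\mathbb{Q}] h(nP) - o(n^2\hat h(P)) \ge (1-\epsilon) n^2 \hat h(P)$ for $n \ge n_0(E,\epsilon)$, which is exactly the claim.

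The main obstacle is bookkeeping the archimedean places correctly: one must be careful that $a_n$ and $d_n$ are genuinely coprime (as ideals) so that the finite part of $h(x(nP))$ is cleanly $\tfrac{1}{[K:\mathbb{Q}]}\log N_{K/\mathbb{Q}}(d_n)$, and one must invoke Corollary \ref{Siegel corollary} in the right normalization — i.e. that $\min\{\|a_n\|_v,\|d_n^2\|_v\}$ is negligible compared to $h(x(nP))$, equivalently $\log\|a_n\|_v / \log\|d_n^2\|_v \to 1$ on the archimedean side. Once that is in place, the passage between $h$ and $\hat h$ costs only an $O_E(1)$, which I would fold into the $(1-\epsilon)$ by taking $n$ large, and the upper bound is immediate. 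I expect no genuine difficulty beyond this; the lemma is really a packaging of Siegel's theorem plus the definition of the naive height in coordinates, exactly parallel to Lemma 9 and 10 in \cite{Sil} over $\mathbb{Q}$.
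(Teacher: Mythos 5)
Your route is the same as the paper's: the paper's proof is a one-line deferral to Lemma 8 of \cite{Sil}, carried out over $K$ after feeding in the Siegel input from the preliminaries, and that height-decomposition-plus-Siegel argument is exactly what you outline. However, the central identity in your second paragraph is incorrect as written. There are two ways to expand $[K:\mathbb{Q}]\,h(x(nP))$, and they must not be mixed: in projective form, $[K:\mathbb{Q}]\,h(x(nP))=\sum_{v\in M_K}\log\max\{\|a_n\|_v,\|d_n^2\|_v\}$, where coprimality of $a_n$ and $d_n$ forces every finite local term to vanish, so \emph{all} of the height sits at the archimedean places; in affine form, $[K:\mathbb{Q}]\,h(x(nP))=\sum_{v}\log\max\{\|a_n/d_n^2\|_v,1\}$, where the finite part equals $2\log N_{K/\mathbb{Q}}(d_n)$ (note the factor $2$ from $d_n^2$, which also goes missing in your first paragraph) and the archimedean part is $\sum_{v\in M_{K,\infty}}\log\max\{\|x(nP)\|_v,1\}$. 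Your displayed identity takes the archimedean part of the first form together with the finite part of the second, thereby double counting $2\log N_{K/\mathbb{Q}}(d_n)$; the ensuing ``$2[K:\mathbb{Q}]h(nP)=4\log N_{K/\mathbb{Q}}(d_n)+o(h(nP))$'' is an artifact of this. The same slip infects the ``more robust'' fallback: the claim ``the archimedean terms are $o(h(nP))$'' is false if those terms are $\sum_{v\in M_{K,\infty}}\log\max\{\|a_n\|_v,\|d_n^2\|_v\}$ (that sum \emph{is} the whole height); it is true, and is exactly what the lower bound needs, for $\sum_{v\in M_{K,\infty}}\log\max\{\|x(nP)\|_v,1\}$.

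The Siegel input also has to be quoted in the right form. Corollary \ref{Siegel corollary} compares the global norms $N_{K/\mathbb{Q}}(a_n)$ and $N_{K/\mathbb{Q}}(d_n^2)$ and carries no place-by-place information, and the assertion $\log\min\{\|a_n\|_v,\|d_n^2\|_v\}=o(h(x(nP)))$ is not what it says (that minimum typically grows like the height, already over $\mathbb{Q}$); moreover the norm comparison alone cannot rule out $a_n$ and $d_n$ being simultaneously tiny at one archimedean place and huge at another. What the lower bound requires is the place-by-place statement behind Proposition \ref{Siegel} with $Q=\infty$: for each $v\in M_{K,\infty}$ one has $\log\max\{\|x(nP)\|_v,1\}=o\bigl(h(x(nP))\bigr)$, i.e.\ $nP$ does not approach the origin of $E$ too fast $v$-adically. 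With that, the affine decomposition gives $[K:\mathbb{Q}]\,h(x(nP))=2\log N_{K/\mathbb{Q}}(d_n)+o\bigl(h(x(nP))\bigr)$, and combining with $h(x(nP))\asymp n^2\hat h(P)+O_E(1)$ yields both inequalities (the trivial direction of the same decomposition gives the upper bound) --- which is precisely Silverman's Lemma 8 transported to $K$, i.e.\ the paper's proof. So the idea is the right one and matches the paper, but the bookkeeping identity and the citation of the corollary need to be repaired as above before the write-up is a proof.
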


\begin{proof} After using Corollary \ref{Siegel} from preliminary, the proof is exactly same as Lemma 8 of \cite{Sil}.  

\end{proof}
\noindent Furthermore, we have the following crucial lower bound,
\begin{lemma} \label{norm height 2}
\[\log N_{K/\mathbb{Q}}(D_n) \gg_{E,\epsilon} \big( \frac{1}{3}-\epsilon)n^2\hat{h}(P)-\log n.\]

\end{lemma}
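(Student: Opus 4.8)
The plan is to estimate $\log N_{K/\mathbb{Q}}(D_n)$ from below by comparing $D_n$ with $d_n$ and removing the contribution of the ``old'' part $d_1 d_2 \cdots d_{n-1}$. By definition $D_n$ is the largest divisor of $d_n$ coprime to $d_1\cdots d_{n-1}$, so $d_n = D_n \cdot (\text{part of } d_n \text{ sharing primes with } d_1\cdots d_{n-1})$, and hence
\[
\log N_{K/\mathbb{Q}}(d_n) = \log N_{K/\mathbb{Q}}(D_n) + \sum_{\mathfrak{p} \mid \gcd(d_n,\, d_1\cdots d_{n-1})} v_{\mathfrak{p}}(d_n)\log N(\mathfrak{p}).
\]
So the task reduces to bounding the second sum from above by something of order $\epsilon n^2 \hat h(P) + O(\log n)$, after which Lemma \ref{norm height 1} (which gives $\log N_{K/\mathbb{Q}}(d_n) \geq (1-\epsilon)n^2\hat h(P)$ and $\log N_{K/\mathbb{Q}}(d_m) \leq m^2 \hat h(P)$) finishes the job, with a loss of a factor coming from the $\tfrac13$. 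This is the exact analogue of Silverman's Lemma 9 in \cite{Sil}, so I would follow that structure closely.

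First I would recall the standard divisibility property of elliptic divisibility sequences coming from the formal group: if $\mathfrak{p} \mid d_m$ and $\mathfrak{p} \mid d_n$ with $\mathfrak{p}\nmid \Delta_E$, then $\mathfrak{p} \mid d_{\gcd(m,n)}$, and moreover $v_{\mathfrak{p}}(d_n) = v_{\mathfrak{p}}(d_m) + v_{\mathfrak{p}}(n/m)$ when $m = m_{\mathfrak{p}}$ is the rank of apparition. Summing the ``excess'' $v_{\mathfrak{p}}(d_n) - v_{\mathfrak{p}}(d_{m_{\mathfrak{p}}})$ over primes dividing both $d_n$ and an earlier term contributes at most $\sum_{\mathfrak p \mid d_n} v_{\mathfrak p}(n)\log N(\mathfrak p) \ll \log n$ (each prime contributes $v_{\mathfrak p}(n)\log N(\mathfrak p)$, and these sum to $\log N(n^{[K:\mathbb Q]}) = O(\log n)$). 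The remaining part of the ``old'' contribution is $\sum_{\mathfrak{p}} v_{\mathfrak{p}}(d_{m_{\mathfrak{p}}})\log N(\mathfrak{p})$ over $\mathfrak{p}\mid d_n$ with $m_{\mathfrak{p}} < n$, and since $m_{\mathfrak p}\mid n$ this is bounded by $\sum_{m \mid n,\, m<n} \log N_{K/\mathbb{Q}}(d_m) \leq \sum_{m\mid n, m<n} m^2 \hat h(P)$. The key point is that $\sum_{m\mid n,\, m < n} m^2 \leq n^2/3 + O(n)$: the largest proper divisor is at most $n/2$ contributing $n^2/4$, the next at most $n/3$ contributing $n^2/9$, and $\tfrac14+\tfrac19+\cdots \le \tfrac13$ after a short computation (bounding $\sum_{j\geq 2} j^{-2} = \pi^2/6 - 1 < \tfrac23$, so $\sum_{m\mid n, m<n} m^2 < \tfrac23 n^2$ — and with a slightly more careful argument, as in Silverman, one gets the cleaner $\tfrac13$; I would follow his exact bookkeeping here rather than reprove it).

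Putting it together: $\log N_{K/\mathbb{Q}}(D_n) \geq \log N_{K/\mathbb{Q}}(d_n) - \big(\tfrac13 + \epsilon\big)n^2\hat h(P) - O(\log n) \geq (1-\epsilon)n^2 \hat h(P) - \big(\tfrac13+\epsilon\big)n^2\hat h(P) - O(\log n)$, which after renaming $\epsilon$ gives the claimed $\big(\tfrac23 - \epsilon\big)n^2\hat h(P) - \log n$ — and here I notice the statement asks only for $\big(\tfrac13-\epsilon\big)$, so there is slack; the weaker constant $\tfrac13$ is what actually gets used downstream (it is what produces the $\sqrt{\log x}$ in the final theorem), and it follows a fortiori. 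The one subtlety I would be careful about is that Lemmas \ref{norm height 1} and \ref{norm ineq 4}, and the divisibility-sequence facts, are stated over $K$ under the trivial-class-number reduction; one must check (as the surrounding text already argues) that passing to $K'$ and lifting $D_n$ does not disturb the norm estimates, since $N_{K'/\mathbb{Q}}$ of a lift is just $N_{K/\mathbb{Q}}(\cdot)^{h_K}$ of the ideal and the inequalities are homogeneous in that exponent.

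The main obstacle is the combinatorial divisor sum $\sum_{m\mid n,\, m<n} m^2$ and making sure the constant that emerges is compatible with what the later application needs; everything else is a faithful transcription of Silverman's argument with $\mathbb{Z}$ replaced by $\mathcal{O}_K$ and absolute values replaced by ideal norms, using Corollary \ref{Siegel corollary} to control the numerators $a_n$ exactly where Silverman uses the classical Siegel bound.
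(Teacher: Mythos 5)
Your proposal follows the same route as the paper, whose proof of this lemma is just a transcription of Silverman's Lemma 9 in \cite{Sil}: the formal-group facts (the rank of apparition $m_{\mathfrak p}$ divides $n$, and the excess valuation $v_{\mathfrak p}(d_n)-v_{\mathfrak p}(d_{m_{\mathfrak p}})$ is controlled by $v_{\mathfrak p}(n)$) carry over to $\mathcal{O}_K$ because Proposition VII.2.2 of \cite{Silverman} holds for any DVR, and the norm estimate then follows from Lemma \ref{norm height 1}, exactly as you outline. One concrete correction to your bookkeeping, however: the claim $\sum_{m\mid n,\ m<n} m^2 \le n^2/3 + O(n)$ is false. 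Already when $6\mid n$ the two divisors $n/2$ and $n/3$ contribute $\tfrac{13}{36}n^2 > \tfrac13 n^2$, and e.g.\ for $n=60$ the sum is $1860 > 60^2/3$; in general $\sum_{m\mid n,\ m<n} m^2 = \sigma_2(n)-n^2$, which for highly composite $n$ gets close to $(\pi^2/6-1)n^2 \approx 0.645\,n^2$. The correct (and only needed) estimate is the other one you wrote, namely $\sum_{m\mid n,\ m<n} m^2 \le (\pi^2/6-1)n^2 < \tfrac23 n^2$: this is precisely the source of the constant in the lemma, since $1-\tfrac23=\tfrac13$. Consequently your concluding claim of the stronger constant $\tfrac23-\epsilon$, and the remark that the stated $\tfrac13-\epsilon$ is ``slack,'' are unjustified — this argument yields at best $2-\pi^2/6\approx 0.355$ in place of $\tfrac13$, not $\tfrac23$. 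With that misstep removed, your own computation gives $\log N_{K/\mathbb{Q}}(D_n) \ge (1-\epsilon)n^2\hat h(P) - \tfrac23 n^2\hat h(P) - O(\log n) = (\tfrac13-\epsilon)n^2\hat h(P) - O(\log n)$, which is exactly the lemma and coincides with the paper's (i.e.\ Silverman's) proof; your remarks about lifting to $K'$ and using Corollary \ref{Siegel corollary} are consistent with how the paper handles the nontrivial class group case.
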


\begin{proof} First part of Silverman's arguments in the proof of Lemma 9 (in \cite{Sil}) uses some facts from formal group of $E.$ All of those carries over number fields because the main required fact was Proposition VII.2.2 of \cite{Silverman}, and that is valid for any DVR. On the other hand second part of the proof, i.e. about estimating $\log N(D_n)$ follows immediately from the previous lemma. 

\end{proof}
\begin{proof}[\textbf{Proof of Theorem 1.0.4}] From Lemma \ref{norm ineq 4}, Lemma \ref{norm height 1} and Lemma \ref{norm height 2} we obtain,

\[\log N_{K/\mathbb{Q}}(U_n)= \log N_{K/\mathbb{Q}}(D_n)-\log N_{K/\mathbb{Q}} (V_n) \gg_{E, \epsilon} \big(\frac{1}{3}-\epsilon\big)n^2\hat{h}(P)-\log n.\]
Sine $P$ is non-torsion from the beginning, $\hat{h}(P) \neq 0.$ And so, we may assume 
\[N_{K/\mathbb{Q}}(U_n)>N_{K/\mathbb{Q}}(d_2\Delta_E),\] 
for all but finitely many $n$. For all such $n's,$ we can therefore pick a prime ideal $\pi_n$ dividing $U_n$ co prime to $d_2\Delta_E.$ Lemma \ref{track 2} then shows $\pi_n$ is a non-Wieferich prime for $P.$ On the other hand, we have
\[\log N_{K/\mathbb{Q}}(U_n) \leq \log N_{K/\mathbb{Q}}(D_n) \ll_{E} n^2 \hat{h}(P).\]
So for all $n \ll_{E,P} \sqrt{\log x}, \log N_{K/\mathbb{Q}}(U_n) \leq x$. In particular $\pi_n$ is a non-Wiefrich for $P$ with norm at most $x.$ Once again Lemma \ref{track 2} shows, these $\pi_n$'s are pairwise different, and hence the proof is now completed.

\end{proof}
%\noindent As immediate consequences, we get the following applications on other algebraic groups 

%\begin{corollary} Let $A$ be a $g$ dimensional abelian variety over $K$ given by product of $g-$ elliptic curves over $K$. Let $P=(P_1, P_2, \cdots, P_g) \in A(K)$ be a point of infinite order. Then there are at least $\gg_{P} (\log x)^{\frac{g}{2}}$ many non-Wieferich primes with respect to $P.$
% \end{corollary}

%\begin{proof} This is immediate since, 
%\[N_{\mathfrak{p}}=|A(\mathbb{F}_{\mathfrak{p}})|=\prod_{i=1}^{g} |E_i(\mathbb{F}_{\mathfrak{p}})|,\]
%and $N_{\mathfrak{p}}P = 0 \pmod {\mathfrak{p}^2}$ if and only if all of $N_{i,\mathfrak{p}}P = 0 \pmod {\mathfrak{p}^2}$ for all $1 \leq i\leq g$ holds.
%\end{proof}

\section{Acknowledgements}
\noindent I first thank the Mathematics Institute of Georg-August-Universit\"at G\"ottingen for providing a beautiful environment for study and research. I am especially grateful to Prof. K. Srinivas for introducing me to his previous work with M. Subramani. The project was started while visiting the Institute of Mathematical Sciences. I am very grateful for their encouragement and fruitful discussions. I am also indebted to my supervisor Harald Andres Helfgott and Jitendra Bajpai for many helpful discussions. In all my needs, I always found them as kind and helpful persons. I got the idea for proving Proposition \ref{Growth} from an answer posted by Semiclassical in Stack Exchange. I thank the anonymous person for proving with real values, which inspired my work. I also thank to Sulakhana for helping out in proof reading.

\end{document}